\documentclass[10pt]{amsart}

\usepackage[utf8]{inputenc}
\usepackage[left=4 cm, right= 4 cm]{geometry}
\usepackage{lmodern}
\usepackage[english,activeacute]{babel}
\usepackage{empheq}
\usepackage{xcolor}
\usepackage{faktor}
\usepackage{amsmath}
\usepackage{amsrefs}
\usepackage{yhmath}
\usepackage{amsthm}
\usepackage{amssymb}
\usepackage{calligra,mathrsfs}
\usepackage{enumitem}
\usepackage{extpfeil}
\usepackage{graphics}
\usepackage[all]{xy}
\usepackage{hyperref}
\usepackage{bookmark}
\usepackage{bbm}

\newtheorem{theorem}{Theorem}[section]
\newtheorem{corollary}[theorem]{Corollary}
\newtheorem{lemma}[theorem]{Lemma}
\newtheorem{prop}[theorem]{Proposition}

\newtheorem{ejemplo}[theorem]{Example}
\newtheorem{comentario}[theorem]{Remark}

\newtheorem{pregunta}[theorem]{Question}

\theoremstyle{definition}
\newtheorem{definicion}[theorem]{Definition}

\numberwithin{equation}{section}

\newcommand*{\myproofname}{Proof of the Lemma:}
\newenvironment{myproof}[1][\myproofname]{\begin{proof}[#1]}{\end{proof}}

\newcommand*{\myproofteo}{Proof of the Theorem:}
\newenvironment{myprooft}[1][\myproofteo]{\begin{proof}[#1]}{\end{proof}}

\newcommand*{\myproofprop}{Proof of the Proposition:}

\newcommand{\R}{\mathbb{R}}

\newcommand{\Q}{\mathbb{Q}}

\newcommand{\Z}{\mathbb{Z}}

\newcommand{\NS}{\mathrm{NS}}
\newcommand{\Pic}{\mathrm{Pic}}

\newcommand*{\shtor}{\mathscr{T}\kern -.5pt or}
\newcommand*{\shom}{\mathscr{H}\kern -.5pt om}

\title[Jets-thresholds, Seshadri constants and Wahl maps on abelian varieties]{Jets-separation thresholds, Seshadri constants and higher Gauss-Wahl maps on abelian varieties}
\author{Nelson Alvarado}
\address{N. Alvarado \\Dipartimento di Matematica, Università degli studi di Roma Tor Vergata , Via della ricerca scientifica, 00133 Roma, Italy}
\email{alvarado@mat.uniroma2.it}

\begin{document}

\maketitle




\begin{abstract}
Given a closed subscheme $Z$ of a polarized abelian variety $(A,\ell)$ we define its \emph{vanishing threshold with respect to $\ell$} and relate it to the Seshadri constant of the ideal defining $Z.$ As a particular case, we introduce the notion of \emph{jets-separation threshold}, which naturally arises as the \emph{vanishing threshold} of the $p$-infinitesimal neighborhood of a point. Afterwards, by means of Fourier-Mukai methods we relate the jets-separation thresholds with the surjectivity of certain higher Gauss-Wahl maps. As a consequence we obtain a criterion for the surjectivity of those maps in terms of the Seshadri constant of the polarization $\ell.$
\end{abstract}

\vspace{0.3cm}

\emph{Keywords: Abelian varieties, cohomological rank functions, jets-separation, Seshadri constants, higher Gauss-Wahl maps}


\maketitle

\section{Introduction}
In this article we introduce the \emph{vanishing} threshold of a closed subscheme of a polarized abelian variety defined over an algebraically closed field. In the case of a closed point (with its reduced structure) this number is no other than the basepoint-freeness threshold introduced by Jiang-Pareschi in \cite{cohrank}, which has also been studied by Caucci \cite{caucci}, Ito \cite{itobir} and Jiang \cite{jiang}, using methods from birational geometry, and by Rojas \cite{Rojas}, by means of stability conditions in the two-dimensional case. When we consider the $p$-infinitesimal neighborhood of a closed point (i.e the scheme defined by the $(p+1)$-power of the ideal of the point) a natural notion of $p$-jets separation threshold arises. 

In the first part of this work, we introduce the aforementioned thresholds and show some examples.  Afterwards, we prove that if $Z$ is a closed subscheme and $Z^{(p)}$ is its $p$-infinitesimal neighborhood then the sequence, suitably normalized, of vanishing thresholds of $Z^{(p)}$ with respect to a polarization $\ell$ converges to the Seshadri constant with respect to $\ell$ of the ideal defining $Z.$ As a particular case, when $Z$ is a point (with its reduced structure) we get that the sequence of $p$-jets separation thresholds (suitably normalized) converges to the classical Seshadri constant of $\ell.$ This is no surprise since, by a theorem of Demailly (\cite[Theorem 6.4]{Dem}), the Seshadri constant of a line bundle at a point reflects the asymptotic behaviour of jet separation. However, unlike the invariants $s(kL,x)$ considered by Demailly (see \cite[Proposition 5.1.6]{PositivityI} for the definition of these numbers), which are integers and depend on $x$, the $p$-jets separation thresholds are real numbers and do not depend on an specific closed point $x.$ In this context, our asymptotic description can be seen as a finer version, valid for abelian varieties, of Demailly's result.

Subsequently, we show that $p$-jets separation thresholds are intimately related, via the Fourier-Mukai transform, to certain higher Gauss-Wahl maps. In fact, we prove that they carry esentially the same information of certain (suitably defined) \emph{$p$-Gauss-Wahl surjectivity threshold}. This paralells the fact, shown by Jiang-Pareschi in \cite[Proposition 8.1]{cohrank}, that basepoint freeness threshold carries the same information as the surjectivity threshold for multiplication maps of global sections (and therefore, is intimately related to projective normality). As a byproduct of the aforementioned relation we obtain a description of the Seshadri constant in terms of the asymptotic behaviour of the surjectivity of certain Gauss-Wahl maps. Finally, by means of Nadel's vanishing theorem, we bound the $p$-jets separation thresholds in terms of the Seshadri constant, when we work in characteristic zero. As a consequence, we establish precise lower bounds for the Seshadri constant guaranteeing the surjectivity of higher Gauss-Wahl maps, asymptotically improving the results present in the literature. 

Turning to a more detailed presentation, we now proceed to define the afore mentioned thresholds. First, we recall that a coherent sheaf $\mathcal{F}$ on an abelian variety $A$ is said to satisfy the index theorem with index zero (IT(0) for short) if 
\begin{equation*}
H^{i}(A,\mathcal{F}\otimes P_{\alpha}) = 0 \hspace{0.3cm} \text{for every $i>0$ and for every $\alpha\in\mathrm{Pic}^{0}(A),$}
\end{equation*} 
where $P_{\alpha}$ is the line bundle on $A$ parametrized by $\alpha$ via a normalized Poincar\'e bundle. 
Now, let $\ell$ be a polarization on $A$ (that is, an ample class in $\NS(A)$).  As shown by Jiang-Pareschi, the IT(0) condition can be extended for $\Q$-twisted sheaves $\mathcal{F}\left<t\ell\right>$ (we recall this notion in Section 2.1). In this context, we define the \emph{vanishing threshold} of $\mathcal{F}$ with respect to $\ell$ as the real number
\begin{equation}
\label{def-vanishing-threshold}
\nu_{\ell}(\mathcal{F}) = \mathrm{inf}\left\{t\in\Q_{\geq 0} : \mathcal{F}\left<t\ell\right>\hspace{0.2cm}\text{satisfies IT(0)}\right\}.
\end{equation}
In this context, we define the vanishing threshold of a closed subscheme $Z$ of $A$ with respect to $\ell$ to be the vanishing threshold of its ideal. In symbols:
$$\beta(Z,\ell) : = \nu_{\ell}(I_{Z}).$$
As an example, fix a closed point $x\in A$ with ideal sheaf $I_{x}$ and consider a positive integer $p.$ Let $L$ be a line bundle representing $\ell.$ As $L$ is ample, all the line bundles $L\otimes P_{\alpha}$ are isomorphic to $t_{y}^{\ast}L$ for some closed point $y\in A,$ where $t_{y}$ is the translation map, and hence the vanishing 
\begin{equation}
\label{vanishing}
H^{1}(A,I_{x}^{p+1}\otimes L\otimes P_{\alpha}) = 0\hspace{0.3cm}\text{for every $\alpha\in\Pic^{0}(A)$}
\end{equation}
means that for every $y\in A$ the line bundle $t_{y}^{\ast}L$ separates $p$-jets at $x$ and thus, that $L$ separates $p$-jets at every point of $A.$ As the higher cohomology groups of $I_{x}^{p+1}\otimes L\otimes P_{\alpha}$ authomatically vanish, the condition \eqref{vanishing} turns to be equivalent to IT(0) and therefore, it is natural to define the $p$-jets separation threshold of $\ell$ using the vanishing threshold with respect to $\ell$ of the $p$-infinitesimal neighborhood of $x$, where $x$ is a (any) closed point of $A,$  that is:
$$\beta_{p}(\ell):= \beta(x^{(p)},\ell) = \nu_{\ell}(I_{x}^{p+1}).$$
Our first result relates the vanishing threshold of infinitesimal neighborhoods of a closed subscheme $Z$ with the Seshadri constant $\varepsilon(I_{Z},L)$ of the ideal defining $Z$ (see section 4 for its definition). More precisely, we prove the following:
\begin{theorem}
\label{A}
Let $A$ be a $g$-dimensional abelian variety and $Z\subset A$ a closed subscheme with ideal $I_{Z}.$ For $p\in\Z_{\geq 0}$ write $Z^{(p)}$ for the closed subscheme of $A$ defined by the ideal $I_{Z}^{p+1}.$ Let $L$ be an ample line bundle on $A$ with class $\ell\in\NS(A),$ then:
\begin{enumerate}
\item\hspace{0.1cm}  The following inequalities hold:
$$\sup \frac{p+1}{\beta(Z^{(p)},\ell)}\geq \varepsilon(I_{Z},L)\geq \lim\sup\frac{p+1}{\beta(Z^{(p)},\ell)}.$$
\item\hspace{0.1cm}  If $\dim Z\leq 1$ then 
\begin{enumerate}
\item\hspace{0.1cm} For $p,r\in\Z_{\geq 0}$ with $p<r$ we have
$$0\leq \beta(Z^{(r)},\ell)\leq\beta(Z^{(r-p-1)},\ell)+\beta(Z^{(p)},\ell)$$
\item\hspace{0.1cm} The sequence $\{(p+1)^{-1}\beta(Z^{(p)},\ell)\}_{p\in\Z_{\geq 0}}$ converges and 
$$\varepsilon(I_{Z},L) = \lim_{p\to\infty}\frac{p+1}{\beta(Z^{(p)},\ell)}=\sup_{p}\frac{p+1}{\beta(Z^{(p)},\ell)}.$$
\end{enumerate}
\item\hspace{0.1cm} If the base field has characteristic zero, the following inequalities of jets separation thresholds hold:
$$ \frac{p+1}{\varepsilon(\ell)} \leq \hspace{0.1cm} \beta_{p}(\ell)\hspace{0.1cm}\leq \hspace{0.1cm}\frac{g+p}{\varepsilon(\ell)},$$
where $\varepsilon(\ell)$ is the Seshadri constant of $\ell.$
\end{enumerate}
\end{theorem}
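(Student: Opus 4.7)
The plan is to prove the three parts in sequence, using the general principle that $\mathbb{Q}$-twisted IT(0) translates, via Fourier-Mukai and Pareschi-Popa techniques, to positivity statements on the blow-up of $I_{Z}$. Throughout I would first clarify, via a standard isogeny trick, the equivalence between ``$I_{Z}^{p+1}\langle t\ell\rangle$ satisfies IT(0)'' and higher-cohomology vanishing after pulling back via a suitable multiplication-by-$n$ map.

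For part (1), let $\pi\colon\tilde{A}\to A$ be the blow-up of $I_{Z}$ with exceptional divisor $E$. The inequality $\varepsilon(I_{Z},L)\geq \limsup(p+1)/\epsilon(Z^{(p)},\ell)$ would follow by showing that, for rational $t>\epsilon(Z^{(p)},\ell)$, IT(0) of $I_{Z}^{p+1}\langle t\ell\rangle$ implies continuous global generation of $I_{Z}^{p+1}\otimes L^{t}$, hence global generation of $I_{Z}^{p+1}\otimes L^{t+\eta}$ for every $\eta>0$. Transferred to the blow-up, this gives nefness of $\pi^{\ast}L^{t+\eta}\otimes\mathcal{O}(-(p+1)E)$ and thus the required bound on $\varepsilon(I_{Z},L)$ (sending $\eta\to 0$). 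For the reverse direction, I would invoke Demailly's asymptotic description of Seshadri constants in terms of jets-separation (\cite[Theorem 6.4]{Dem}): ``$mL$ separates $s$-jets at every point of $A$'' translates, using the translation-invariance of $\ell$ and the $\Pic^{0}$-averaging intrinsic to IT(0), into IT(0) of $I_{Z}^{p+1}\langle t\ell\rangle$ at the appropriate rate, delivering $\sup(p+1)/\epsilon(Z^{(p)},\ell)\geq \varepsilon(I_{Z},L)$.

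For part (2)(a), I would exploit the surjection $I_{Z}^{p+1}\otimes I_{Z}^{r-p}\twoheadrightarrow I_{Z}^{r+1}$. Given rationals $t_{1}>\epsilon(Z^{(p)},\ell)$ and $t_{2}>\epsilon(Z^{(r-p-1)},\ell)$, a Pareschi-Popa preservation argument would show that the tensor product $I_{Z}^{p+1}\langle t_{1}\ell\rangle\otimes I_{Z}^{r-p}\langle t_{2}\ell\rangle$ remains IT(0) (tensor of an IT(0) sheaf with a continuously globally generated one, which is implied by M-regularity). The hypothesis $\dim Z\leq 1$ is essential in order to transfer IT(0) from this tensor product to its quotient $I_{Z}^{r+1}\langle(t_{1}+t_{2})\ell\rangle$: it guarantees that the obstructing $\shtor$-sheaves, coming from the non-injectivity of the multiplication map, are supported in codimension high enough that their higher cohomology vanishes. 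Letting $t_{1}, t_{2}$ approach their respective infima yields (a). Part (2)(b) then follows from Fekete's lemma applied to $b_{n}=\epsilon(Z^{(n-1)},\ell)$: the subadditivity reads $b_{m+n}\leq b_{m}+b_{n}$, so $b_{n}/n$ converges to $\inf b_{n}/n$; combining with part (1) identifies this limit with $1/\varepsilon(I_{Z},L)$ and upgrades the two bounds of part (1) to a single sup-equals-limit identity.

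For part (3), the lower bound $\epsilon_{p}(\ell)\geq(p+1)/\varepsilon(\ell)$ is the specialization of (2)(b) to $Z=\{x\}$: the sup formula forces $(p+1)/\epsilon_{p}(\ell)\leq\varepsilon(\ell)$ term by term. The strict upper bound $\epsilon_{p}(\ell)<(g+p)/\varepsilon(\ell)$ is where the complex analytic hypothesis enters via Nadel vanishing. Fix rational $t_{0}$ with $t_{0}\varepsilon(\ell)>g+p$ and $t_{0}<(g+p)/\varepsilon(\ell)$. The positivity of the Seshadri constant, via Demailly's asymptotic divisor construction, yields an effective $\Q$-divisor $D\equiv t_{0}L$ with $\mathrm{mult}_{x}D>g+p$; the Demailly inclusion for multiplier ideals at smooth points then gives $\mathcal{J}(D)\subseteq I_{x}^{p+1}$. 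A Kollár--Angehrn--Siu perturbation refines $D$ so that $\mathcal{J}(D)$ coincides with $I_{x}^{p+1}$ on a punctured neighborhood of $x$, making the quotient $I_{x}^{p+1}/\mathcal{J}(D)$ zero-dimensional and hence with trivial higher cohomology. Nadel vanishing for $\mathcal{J}(D)\otimes L^{t_{0}+\delta}\otimes P_{\alpha}$, together with the short exact sequence $0\to\mathcal{J}(D)\to I_{x}^{p+1}\to I_{x}^{p+1}/\mathcal{J}(D)\to 0$, then produces IT(0) for $I_{x}^{p+1}\langle(t_{0}+\delta)\ell\rangle$, forcing $\epsilon_{p}(\ell)\leq t_{0}+\delta<(g+p)/\varepsilon(\ell)$. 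The main obstacle I anticipate is part (2)(a): transferring IT(0) through the ideal multiplication map requires careful cohomological control of the obstructing $\shtor$-sheaves, and this is precisely where the hypothesis $\dim Z\leq 1$ should enter in a non-superficial way.
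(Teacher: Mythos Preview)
Your treatment of part~(2) matches the paper's almost exactly: preservation of GV under derived tensor product (Theorem~\ref{preservacion}), followed by a spectral-sequence argument showing that when $\dim Z\le 1$ the $\shtor$-sheaves are supported in dimension $\le 1$ so that GV passes from $I_Z^{r-p}\,\underline{\otimes}\,I_Z^{p+1}$ to the honest product $I_Z^{r+1}$, and then Fekete. No issues there.

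Part~(1) is where you diverge from the paper, and one of your two directions has a genuine gap. For $\sup\frac{p+1}{\epsilon(Z^{(p)},\ell)}\ge\varepsilon(I_Z,L)$ you invoke Demailly's asymptotic jets theorem. That theorem concerns $\varepsilon(L,x)$ for a \emph{point} $x$ and the numbers $s(kL,x)$; it says nothing about an arbitrary ideal $I_Z$, so as written this direction is unproved for general $Z$. The paper instead works directly on the blow-up: given rational $t<\varepsilon(I_Z,L)$, it uses ampleness of $b\sigma^\ast L-aE$ together with Serre vanishing (and the comparison lemma $H^j(I_Z^r\otimes M)\simeq H^j(\sigma^\ast M(-rE))$ for $r\gg0$) to produce, for infinitely many $k$, the IT(0) statement $I_Z^{k+1}\langle t^{-1}(k+1)\ell\rangle$ is GV. For the other direction $\varepsilon(I_Z,L)\ge\limsup\frac{p+1}{\epsilon(Z^{(p)},\ell)}$, your CGG\,$\Rightarrow$\,global generation\,$\Rightarrow$\,nefness route is not what the paper does; the paper uses instead the chain
\[
\frac{k+1}{\varepsilon(I_Z,L)}\ \le\ d_L(I_Z^{k+1})\ \le\ \mathrm{reg}_L(I_Z^{k+1})\ \le\ 2+g+\epsilon_k(Z,\ell),
\]
and passes to the limit. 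Your route could perhaps be made to work after pulling back by $\lambda_b$ and descending nefness along the finite cover $\mathrm{Bl}_{\lambda_b^{-1}Z}A\to\mathrm{Bl}_Z A$, but you would need normality of $\mathrm{Bl}_Z A$ (not automatic for arbitrary $Z$) and a precise CGG-to-global-generation statement; the regularity argument sidesteps both issues.

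For part~(3), the lower bound is handled identically. For the upper bound your approach is workable but more circuitous than the paper's, and your setup contains a slip: ``$t_0\varepsilon(\ell)>g+p$ and $t_0<(g+p)/\varepsilon(\ell)$'' is self-contradictory. The paper avoids the inclusion-plus-perturbation manoeuvre entirely by quoting a lemma of Lazarsfeld--Pareschi--Popa which produces, for $m\gg0$, a divisor $D_0\in|mv_0(1-c)L|$ with $\mathrm{mult}_0 D_0=mu_0$ and such that the single blow-up at $0$ is already a log resolution. One then \emph{computes} $\mathcal{J}(A,\tfrac{g+p}{mu_0}D_0)=I_0^{p+1}$ on the nose, and Nadel gives IT(0) of $I_0^{p+1}\langle r\ell\rangle$ for every rational $r>\tfrac{g+p}{\varepsilon(\ell)}(1-c)$, hence the strict inequality. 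This is both shorter and avoids any appeal to Angehrn--Siu-type cutting.
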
 

Our second result concerns higher Gauss-Wahl maps. They were introduced by Wahl in \cite{Wahl} and they are a hierarchy of linear maps $\{\gamma_{L,M}^{p}\}_{p\in\Z_{\geq 0}}$ associated to a pair of line bundles $L,M$, where $\gamma_{L,M}^{0}$ is no other than the multiplication map of global sections. For the moment, we give an informal introduction to these maps (in Section 5 we recall their precise definition). In brief, given two line bundles $L,M$ on a projective variety $A,$ the associated $p$-th Gauss-Wahl map is a linear map 
$$\gamma_{L,M}^{p}: H^{0}\left(R_{L}^{(p-1)}\otimes M\right)\rightarrow H^{0}(\operatorname{Sym}^{p}\Omega_{A}\otimes L\otimes M),$$
where $R_{L}^{(p-1)}$ is a sheaf that in the case that $L$ is very ample is no other than the $p-1$-order conormal bundle associated to the embedding of $A$ into $\mathbb{P}H^{0}(L)$ (for instance, $R_{L}^{(1)}$ can be identified with $N_{A/\mathbb{P}H^{0}(L)}^{\vee}\otimes L$).

When $L$ separates $p$-jets at every point, the surjectivity of $\gamma_{L,M}^{p}$ is detected by the vanishing of $H^{1}(R_{L}^{(p)}\otimes M)$ (we refer to section 5 for details). In this context, informally speaking, the vanishing threshold $\nu_{\ell}(R_{L}^{(p)})$ (see \eqref{def-vanishing-threshold}) of the sheaf $R_{L}^{(p)}$ with respect to $\ell$ may be seen as the minimal ``rational power'' of $L$ such that the ``rational'' Gauss-Wahl map $\gamma_{L,\nu L}^{p}$ is surjective. Extending \cite[Proposition 8.1]{cohrank}, our second result says that the numbers
$$\mu_{p}(\ell) := \nu_{\ell}\left(R_{L}^{(p)}\right)$$
are related, via the Fourier-Mukai transform (reviewed in Section 2.3), with the jets-separation thresholds as follows:
\begin{theorem}
\label{B}
Let $L$ be an ample line bundle on an abelian variety $A.$ Let $\ell\in\NS(A)$ be the class of $L$ and suppose that $L$ separates $p$-jets at every point of $A$ (thus $\beta_{p}(\ell)<1$). Then the following equality holds:
$$\mu_{p}(\ell) = \frac{\beta_{p}(\ell)}{1-\beta_{p}(\ell)}.$$
\end{theorem}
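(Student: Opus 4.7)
The plan is to extend the Fourier--Mukai strategy of Pareschi--Jiang \cite[Proposition~8.1]{cohrank}, which handles the case $p=0$, to arbitrary $p\geq 0$. The main idea is first to identify, up to a twist by $L^{-1}$, the sheaf $R_L^{(p)}$ with an explicit Fourier--Mukai transform of $I_x^{\,p+1}\otimes L$, and then to transport the $\mathbb Q$-twisted IT(0) threshold across that identification. The Möbius formula $s=t/(1-t)$ will arise from a single unit of $\ell$-shift introduced by the extra $L^{-1}$.

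\smallskip

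First, since the hypothesis $\epsilon_p(\ell)<1$ guarantees that $L$ separates $p$-jets everywhere, the evaluation $H^0(L)\otimes\mathcal O_A\twoheadrightarrow P_A^p(L)$ onto the $p$-th principal parts bundle is surjective and yields $R_L^{(p)}\cong p_{1*}(I_\Delta^{\,p+1}\otimes p_2^*L)$, where $\Delta\subset A\times A$ is the diagonal. Applying the shear automorphism $\tau\colon(a,b)\mapsto(a,a+b)$, which sends $A\times\{x\}$ onto $\Delta$, together with the Mumford see-saw identity $m^*L\cong p_1^*L\otimes p_2^*L\otimes(\mathrm{id}\times\phi_L)^*\mathcal{P}$ for the group law $m$, this rewrites as
\[R_L^{(p)}\otimes L^{-1}\;\cong\;p_{1*}\bigl(p_2^*(I_x^{\,p+1}\otimes L)\otimes(\mathrm{id}\times\phi_L)^*\mathcal{P}\bigr).\]
Flat base change along the cartesian square whose vertical arrows are $p_2$ and $q_2$ and whose bottom row is the isogeny $\phi_L\colon A\to\hat A$, combined with the projection formula, then identifies the right-hand side as a Fourier--Mukai transform:
\[R_L^{(p)}\otimes L^{-1}\;\cong\;\Phi_{\mathcal{P}}\bigl(\phi_{L*}(I_x^{\,p+1}\otimes L)\bigr).\]

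\smallskip

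It remains to transport the IT(0) threshold across this identification, for which I apply the calculus of $\mathbb Q$-twists under the Fourier--Mukai functor and pullback by $\phi_L$ developed in \cite[\S 8]{cohrank}. The essential ingredients are the absorption rule $\nu_\ell(\mathcal F\otimes L)=\nu_\ell(\mathcal F)-1$ (which accounts for the $L^{-1}$ of the previous step), the intertwining $\mu_n^*\Phi_{\mathcal{P}}\simeq\Phi_{\mathcal{P}}\hat\mu_n^*$, and Mukai inversion $\Phi_{\mathcal{P}^\vee}\Phi_{\mathcal{P}}\simeq(-1_A)^*[-g]$. Combining these with the identification of the preceding step produces the equivalence
\[R_L^{(p)}\langle s\,\ell\rangle\text{ satisfies IT(0)}\iff I_x^{\,p+1}\bigl\langle\tfrac{s}{s+1}\,\ell\bigr\rangle\text{ satisfies IT(0)},\]
and taking infima over rational $s>0$ gives $\epsilon_p(\ell)=\mu_p(\ell)/(1+\mu_p(\ell))$, equivalent to the claimed formula. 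The main technical hurdle is the threshold bookkeeping: one must carefully track the numerical class of $L^{ab}$ through pullback by the multiplication isogeny $\mu_b$, pushforward by $\phi_L$, and the Fourier--Mukai transform, so as to land on the precise Möbius transformation $s\mapsto s/(s+1)$. By contrast, the passage from $p=0$ to general $p$ is essentially automatic, since all dependence on $p$ is packaged inside $\phi_{L*}(I_x^{\,p+1}\otimes L)$ and plays no role in the FM manipulations.
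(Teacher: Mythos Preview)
Your strategy is essentially the paper's: first identify $R_L^{(p)}\otimes L^{-1}$ with the Fourier--Mukai transform of $I_0^{\,p+1}\otimes L$ (pulled back via $\varphi_L$), then transport the threshold across this identification. Your shear-automorphism computation in step one is just a change of coordinates away from the paper's difference-map argument (Lemma~\ref{transformada-fundamental}) and yields the same object; note only that with the paper's conventions the correct output of your base-change is $\varphi_L^{\ast}\Phi_{\mathcal P}(I_x^{\,p+1}\otimes L)$ rather than $\Phi_{\mathcal P}\bigl(\phi_{L\ast}(I_x^{\,p+1}\otimes L)\bigr)$, though the two coincide if by the latter $\Phi_{\mathcal P}$ you mean the transform in the direction $D^b(\hat A)\to D^b(A)$.

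Where your account is incomplete is step two. You list ingredients (absorption, an intertwining relation, Mukai inversion) and assert they combine to give an IT(0) equivalence, but you do not carry out the bookkeeping you yourself flag as the main hurdle, and Mukai inversion is not actually needed. The paper's precise device is the transformation formula for cohomological rank functions (Proposition~\ref{transformation-formula}(a)), which, once step one is in hand, gives directly
\[h^{1}\bigl(I_0^{\,p+1}\langle y\ell\rangle\bigr)\;=\;\frac{(1-y)^g}{\chi(\ell)}\,h^{1}\Bigl(R_L^{(p)}\Bigl\langle\tfrac{y}{1-y}\,\ell\Bigr\rangle\Bigr)\qquad(0<y<1).\]
This is an equality of $h^1$-functions, not of IT(0) conditions; since $\mu_p(\ell)$ is \emph{defined} via $h^1$-vanishing (Definition~\ref{def-threshold}) and $\epsilon_p(\ell)$ reduces to an $h^1$-threshold by Remark~\ref{ceroGV}, that is exactly what one wants. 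One also needs Lemma~\ref{GV-RP}(b) to know $\mu_p(L)\geq 0$, so that the substitution $s=y/(1-y)$ with $y\in(0,1)$ covers the relevant range. Your phrasing in terms of IT(0) conditions conflates these levels and would require additional justification.
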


As a consequence, this gives a surprising expression of the Seshadri constant in terms of the asymptotic surjectivity of higher Gauss-Wahl maps. Concretely, we have the following:

\begin{corollary}
\label{sesh-gauss}
Let $\ell\in\NS(A)$ be a polarization on an abelian variety $A.$ Then:
$$\varepsilon(\ell) = 1 + \lim_{p\to\infty} \frac{1}{\mu_{p}((p+2)\ell)}$$
\end{corollary}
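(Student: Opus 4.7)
The plan is to apply Theorem~\ref{B} to the rescaled polarizations $(p+2)\ell$ and then take the limit using the asymptotic bound from Theorem~\ref{A}(3).

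First I would record the scaling behaviour of the vanishing threshold: directly from \eqref{def-vanishing-threshold} one has $\nu_{k\ell}(\mathcal{F}) = k^{-1}\nu_\ell(\mathcal{F})$ for every positive integer $k$, since multiplying $\ell$ by $k$ rescales the $\mathbb{Q}$-twist parameter by $1/k$. In particular $\epsilon_p((p+2)\ell) = \epsilon_p(\ell)/(p+2)$. Plugging this into the inequalities of Theorem~\ref{A}(3) applied to the polarization $(p+2)\ell$ (whose Seshadri constant equals $(p+2)\varepsilon(\ell)$) gives
$$\frac{p+1}{(p+2)\varepsilon(\ell)}\;\le\;\epsilon_p((p+2)\ell)\;<\;\frac{g+p}{(p+2)\varepsilon(\ell)},$$
and since both extremes tend to $1/\varepsilon(\ell)$ the squeeze theorem yields $\lim_{p\to\infty}\epsilon_p((p+2)\ell) = 1/\varepsilon(\ell)$.

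Next, the classical Lefschetz--Ohbuchi-type result that a tensor product of $p+2$ ample line bundles on an abelian variety separates $p$-jets at every point ensures $\epsilon_p((p+2)\ell)<1$, so Theorem~\ref{B} applies to the line bundle $(p+2)L$ and produces
$$\mu_p((p+2)\ell) \;=\; \frac{\epsilon_p((p+2)\ell)}{1-\epsilon_p((p+2)\ell)}, \qquad \text{equivalently} \qquad \frac{1}{\mu_p((p+2)\ell)} \;=\; \frac{1}{\epsilon_p((p+2)\ell)} - 1.$$
Letting $p\to\infty$ and substituting the limit from the previous step yields $\lim_{p\to\infty} 1/\mu_p((p+2)\ell) = \varepsilon(\ell)-1$, which rearranges to the stated formula.

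The expected main obstacle is essentially bookkeeping: checking the scaling identity for $\nu_\ell$ (straightforward from the definition) and invoking the standard jet-separation statement for $(p+2)L$ on the abelian variety $A$ to activate the hypothesis of Theorem~\ref{B}. Once these are in place, the corollary reduces to an elementary squeeze argument.
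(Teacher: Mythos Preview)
Your argument is structurally identical to the paper's: rescale, apply Theorem~\ref{B} to $(p+2)L$, and pass to the limit. The one substantive difference is how you compute $\lim_{p}\epsilon_{p}((p+2)\ell)$. You invoke the two-sided bound of Theorem~\ref{A}(3) and squeeze; the paper instead uses Theorem~\ref{A}(2b) (equivalently Theorem~\ref{seshadrilimite}), which already gives $\lim_{p}(p+1)/\epsilon_{p}(\ell)=\varepsilon(\ell)$ directly, hence $\epsilon_{p}(\ell)/(p+2)\to 1/\varepsilon(\ell)$ without any squeeze.

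This matters for generality: the upper bound in Theorem~\ref{A}(3) is proved via Nadel vanishing and is stated only for \emph{complex} abelian varieties, so your proof establishes the corollary only over $\mathbb{C}$. The paper's route through Theorem~\ref{A}(2b) avoids this restriction and yields the result over any algebraically closed field of characteristic zero, matching the stated hypothesis of the corollary. If you are content with the complex case your proof is fine; to recover the full statement, replace the squeeze step by a direct appeal to the limit formula in Theorem~\ref{A}(2b).
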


An interesting consequence of this relation is the fact that we are able to characterize special kinds of polarized abelian varieties using Gauss-Wahl maps. Concretely, here we highlight two special cases:

\textbf{1. Decomposable abelian varieties with a curve factor:} In \cite{Nakamaye} Nakamaye proved that if $(A,\ell)$ is a polarized abelian variety, then $\varepsilon(\ell)\geq 1$ and, moreover, $\varepsilon(\ell) = 1$ if and only if 
\begin{equation}
\label{tipo-producto}
(A,\ell)\simeq (E,\theta)\boxtimes(B,m),
\end{equation}
where $(E,\theta)$ is a principally polarized elliptic curve and $(B,m)$ is another polarized abelian variety. In this context, Corollary \ref{sesh-gauss} says that varieties of the form \eqref{tipo-producto} are also characterized by the following equivalent conditions:
\begin{enumerate}[label = \alph*)]
\item\hspace{0.1cm} $\beta_{p}(\ell) = p+1$ for every $p\in\Z_{\geq 0}$ 
\item\hspace{0.1cm} the sequence $\{\mu_{p}((p+2)\ell)\}_{p\in\Z_{\geq 0}}$ is unbounded
\end{enumerate}

\textbf{2. Hyperelliptic jacobians:} In \cite[Theorem 7]{Deb} it is shown that if $C$ is a curve of genus $g\geq 5$ and $C$ is not hyperelliptic then $\varepsilon(\operatorname{Jac}C,\theta)>2$  while $\varepsilon(\operatorname{Jac}C,\theta) = 2g/(g+1)<2$ in the hyperelliptic case. Moreover, in the cited reference it is conjectured that the condition $\varepsilon(\theta)<2$  should characterize jacobians of hyperelliptic curves among indecomposable principally polarized abelian varieties (i.p.p.a.v) of dimension $g\geq 5.$ 

In this context, Corollary \ref{sesh-gauss} says that jacobians of hyperelliptic curves are characterized among jacobians (and conjecturally among all i.p.p.a.v) by the fact that $\mu_{p}((p+2)\theta)>1$ for $p\gg 0,$ that is, for the failure of surjectivity of the Gauss-Wahl map $\gamma_{(p+2)\theta,(p+2)\theta}^{p}$ for $p\gg 0.$

Finally, as an application, we use Theorem \ref{B} above to study the surjectivity of Gauss-Wahl maps $\gamma_{cL,dM}^{p},$ where $c,d,p\in\Z_{\geq 0}$ and $L,M$ are ample and algebraically equivalent line bundles. Our result is the following:

\begin{theorem}
\label{resultado-sobreyectividad}
Let $L,M$ ample and algebraically equivalent ample line bundles on an abelian variety $A.$ Let $c,d$ be positive integers and write $\ell$ for the class of $L$ and $M$ in $\NS(A).$ Assume that 
$$\beta_{p}(\ell) < \frac{cd}{c+d}$$
for some $p\in\Z_{\geq 0}.$ Then the Gauss-Wahl map $\gamma_{cL,dM}^{p}$ is surjective.
\end{theorem}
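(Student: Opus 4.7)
The plan is to reduce the surjectivity of $\gamma_{cL,dM}^{p}$ to a vanishing of $H^{1}$ captured by the $\Q$-twisted IT(0) property, and then apply Theorem \ref{B} not to $L$ itself but to the scaled line bundle $cL$. The asymmetry between $c$ and $d$ in the hypothesis $\epsilon_{p}(\ell)<cd/(c+d)$ turns out to be cosmetic: after passing to $cL$, the hypothesis is equivalent to $d>\nu_{\ell}(R_{cL}^{(p)})$.

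As a preliminary step I would verify that $cL$ separates $p$-jets at every point, so that the results of Section~5 apply. Rescaling the polarization in the definition \eqref{def-vanishing-threshold} yields $\nu_{c\ell}(\mathcal{F})=c^{-1}\nu_{\ell}(\mathcal{F})$, in particular $\epsilon_{p}(c\ell)=c^{-1}\epsilon_{p}(\ell)$. Since $cd/(c+d)<c$, the hypothesis forces
$$\epsilon_{p}(c\ell)\;<\;\frac{d}{c+d}\;<\;1,$$
and by the discussion preceding Theorem \ref{A} this means $cL$ separates $p$-jets at every point of $A$. Consequently, as recalled in the introduction, the surjectivity of $\gamma_{cL,dM}^{p}$ is equivalent to the vanishing $H^{1}(A,R_{cL}^{(p)}\otimes dM)=0$.

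Next I would apply Theorem \ref{B} with $cL$ in place of $L$ to compute
$$\mu_{p}(c\ell)\;=\;\frac{\epsilon_{p}(c\ell)}{1-\epsilon_{p}(c\ell)}\;=\;\frac{\epsilon_{p}(\ell)}{c-\epsilon_{p}(\ell)}.$$
Combining with the scaling identity $\nu_{\ell}(R_{cL}^{(p)})=c\,\nu_{c\ell}(R_{cL}^{(p)})=c\mu_{p}(c\ell)$ gives
$$\nu_{\ell}\bigl(R_{cL}^{(p)}\bigr)\;=\;\frac{c\,\epsilon_{p}(\ell)}{c-\epsilon_{p}(\ell)},$$
and a direct rearrangement shows that $\epsilon_{p}(\ell)<cd/(c+d)$ is equivalent to $d>\nu_{\ell}(R_{cL}^{(p)})$. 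To conclude, writing $M\cong L\otimes P_{\alpha}$ for some $\alpha\in\Pic^{0}(A)$ (using algebraic equivalence), we have $R_{cL}^{(p)}\otimes dM\cong R_{cL}^{(p)}\otimes dL\otimes P_{d\alpha}$. Since $d>\nu_{\ell}(R_{cL}^{(p)})$, the $\Q$-twisted sheaf $R_{cL}^{(p)}\langle d\ell\rangle$ satisfies IT(0), so $H^{1}(A,R_{cL}^{(p)}\otimes dL\otimes P_{\beta})=0$ for every $\beta\in\Pic^{0}(A)$; specializing to $\beta=d\alpha$ delivers the required vanishing.

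The main obstacle I expect is not conceptual but bookkeeping: justifying rigorously, from the definition of the $\Q$-twisted IT(0) condition, the scaling identity $\nu_{\ell}(R_{cL}^{(p)})=c\mu_{p}(c\ell)$, and being careful that the hypothesis really forces \emph{$cL$}—not just $L$—to separate $p$-jets, since both Theorem \ref{B} and the Fourier--Mukai reinterpretation of Gauss--Wahl surjectivity need to be invoked for the rescaled bundle.
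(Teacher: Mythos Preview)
Your proposal is correct and follows essentially the same route as the paper's own proof (Theorem~\ref{sobreyectividad-jets}): verify $\epsilon_{p}(c\ell)<1$ so that $cL$ separates $p$-jets, apply Theorem~\ref{B} to $cL$ to get $\mu_{p}(cL)=\epsilon_{p}(\ell)/(c-\epsilon_{p}(\ell))$, and then check that the hypothesis gives $\mu_{p}(cL)<d/c$, i.e.\ $R_{cL}^{(p)}\langle d\ell\rangle$ is IT(0). One small wording point: the vanishing $H^{1}(R_{cL}^{(p)}\otimes dM)=0$ \emph{implies} the surjectivity of $\gamma_{cL,dM}^{p}$ (via the short exact sequence \eqref{defgauss}, which is exact on the right once $cL$ separates $p$-jets), but is not in general equivalent to it; only the implication you actually use is needed.
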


For instance, the above result says that if $\beta_{p}(\ell)<1,$ that is, if $\ell$ separates $p$-jets at every point, then $\gamma_{2L,2M}^{p}$ is surjective, while $\gamma_{L,M}^{p}$ is already surjective whenever $\beta_{p}(\ell)<1/2.$ On the other hand, using Theorem \ref{A} 3) we obtain a condition ensuring the surjectivity of certain Gauss-Wahl maps in terms of the Seshadri constant, which (at least asymptotically) improves the results present in the literature (\cite[Theorem 2.2]{mult}, for example). More precisely, we get the following:

\begin{corollary}
\label{corolario-sobreyectividad-seshadri}
Let $\ell$ be a polarization on an abelian variery. Consider a positive integer $c$ such that $c\cdot\varepsilon(\ell)>g+p,$ where $\varepsilon(\ell)$ is the Seshadri constant of $\ell.$ Then 
$$\mu_{p}(c\ell) < \frac{g+p}{c\varepsilon(\ell)-(g+p)}.$$
In particular, for $L,M$ ample line bundles representing $\ell,$ the Gauss-Wahl map $\gamma_{cL,dM}^{p}$ is surjective as soon as 
$$d\hspace{0.1cm}\geq\hspace{0.1cm}\frac{c(g+p)}{c\varepsilon(L)-(g+p)}.$$
\end{corollary}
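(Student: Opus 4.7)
The plan is to string together the three main results of the paper (Theorem \ref{A} (3), Theorem \ref{B}, and Theorem \ref{resultado-sobreyectividad}) after observing that both thresholds $\epsilon_p$ and $\mu_p$ rescale in the obvious way with the polarization. Concretely, from the defining infimum \eqref{def-vanishing-threshold} one reads off that $\nu_{c\ell}(\mathcal{F}) = \nu_{\ell}(\mathcal{F})/c$ for every positive integer $c$, so in particular $\epsilon_{p}(c\ell) = \epsilon_{p}(\ell)/c$ and $\mu_{p}(c\ell) = \nu_{\ell}(R_{cL}^{(p)})/c$. I would record this scaling as a short preliminary observation.

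With that in hand, the first step is to apply Theorem \ref{A} (3) to bound $\epsilon_{p}$ at the rescaled class: since the hypothesis $c\cdot\varepsilon(\ell) > g+p$ gives
$$\epsilon_{p}(c\ell) \;=\; \frac{\epsilon_{p}(\ell)}{c} \;<\; \frac{g+p}{c\,\varepsilon(\ell)} \;<\; 1,$$
the line bundle $cL$ separates $p$-jets at every point, which is precisely the hypothesis needed to apply Theorem \ref{B} to $c\ell$. Because the function $t\mapsto t/(1-t)$ is strictly increasing on $[0,1)$, plugging the bound above into the identity of Theorem \ref{B} yields
$$\mu_{p}(c\ell) \;=\; \frac{\epsilon_{p}(c\ell)}{1-\epsilon_{p}(c\ell)} \;<\; \frac{(g+p)/(c\,\varepsilon(\ell))}{1-(g+p)/(c\,\varepsilon(\ell))} \;=\; \frac{g+p}{c\,\varepsilon(\ell)-(g+p)},$$
which is the first assertion.

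For the surjectivity of $\gamma_{cL,dM}^{p}$, I would invoke Theorem \ref{resultado-sobreyectividad}, whose hypothesis requires $\epsilon_{p}(\ell) < cd/(c+d)$. Using the estimate $\epsilon_{p}(\ell) < (g+p)/\varepsilon(\ell)$ from Theorem \ref{A} (3), it is enough to arrange $(g+p)/\varepsilon(\ell) \leq cd/(c+d)$, and a one-line rearrangement turns this into the stated lower bound $d \geq c(g+p)/(c\,\varepsilon(L)-(g+p))$. No real obstacle appears here: the entire content is packaged into the three earlier theorems, and the only delicate points are the two bookkeeping items above, namely the rational rescaling of the thresholds and the fact that the hypothesis $c\,\varepsilon(\ell)>g+p$ is exactly what keeps us in the range $\epsilon_{p}(c\ell)<1$ where Theorem \ref{B} applies.
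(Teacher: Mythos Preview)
Your proposal is correct and follows essentially the same route as the paper: apply the Seshadri bound $\epsilon_{p}(\ell)<(g+p)/\varepsilon(\ell)$ (Theorem \ref{A}(3)/Proposition \ref{desigualdad}), feed it into the identity $\mu_{p}(c\ell)=\epsilon_{p}(c\ell)/(1-\epsilon_{p}(c\ell))$ from Theorem \ref{B} using monotonicity of $t\mapsto t/(1-t)$, and for surjectivity rearrange $(g+p)/\varepsilon(\ell)\leq cd/(c+d)$ into the stated bound on $d$ and invoke Theorem \ref{resultado-sobreyectividad}. The paper compresses the $\mu_{p}$ inequality into ``follows as in the proof of Theorem \ref{sobreyectividad-jets}'', which is exactly the computation you wrote out; your preliminary remark on the rescaling of $\mu_{p}$ is not actually used in your argument and could be dropped.
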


This article structures as follows: in Section 2 we survey the notions of generic vanishing for $\Q$-twisted objects and their relation with the Fourier-Mukai functor, whose definition we also recall. In Section 3 we formally introduce the notion of vanishing threshold of a closed subscheme of an abelian variety and, in particular,  the $p$-jets separation thresholds. In this context, we establish Theorem \ref{A} 2.a). In Section 4 we discuss Seshadri constants of ideal sheaves, and relate them with the vanishing threshold of the corresponding closed subscheme, proving Theorem \ref{A}.1) and 2.b). In Section 5 we discuss Gauss-Wahl maps, introduce their surjectivity thresholds and prove a slightly more general version of Theorem \ref{B} above by computing the Fourier transform of $I_{0}^{p+1}\otimes L.$  In Section 6, we prove Theorem \ref{A} 3) and show its consequences. In particular, in this section we prove Theorem \ref{resultado-sobreyectividad} and Corollary \ref{corolario-sobreyectividad-seshadri}. Finally, in Section 7 we state a couple of questions for future research: the first one regards the possibility to render our asymptotic results more effective and the second one regards the possibility to extend Theorem \ref{A} 3) to abelian varieties defined over fields of positive characteristic.

\section{Preliminaries}

In this section we recall the preliminary materials that we will need. We start by fixing some notation.  Let $A$ be an abelian variety defined over an algebraically closed field $k$. By \emph{a polarization} on $A$ we mean an ample class $\ell\in\NS(A).$ For $A$ as before, we write $\hat{A}$ for the dual of $A,$ that is, $\hat{A} = \Pic^{0}(A)$ is the abelian variety parametrizing traslation-invariant line bundles on $A.$ For a polarization $\ell$ we write $\varphi_{\ell}$ (or $\varphi_{L}$) for the isogeny $A\rightarrow\hat{A}$ which sends a point $x\in A$ to the point of $\hat{A}$ corresponding to the line bundle $t_{x}^{\ast}L\otimes L^{-1},$ where $t_{x}:A\rightarrow A$ is the traslation map given by $z\mapsto z+x$ and $L$ is a (any) line bundle representing $\ell$. For a positive integer $n$ we write $\lambda_{n}:A\rightarrow A$ for the isogeny $x\mapsto nx.$ Finally, we fix $\mathcal{P}$ a normalized Poincar\'e bundle on $A\times\hat{A}$ and for $\alpha\in\hat{A},$ write $P_{\alpha}$ for the line bundle on $A$ parametrized by $\alpha$ (equivalently, $P_{\alpha}$ is the fiber of $\mathcal{P}$ over $\alpha$). 

\subsection{$\Q$-twisted objects}

For $A$ as before, we write $D^{b}(A)$ for the bounded derived category of coherent sheaves on $A.$ We consider pairs $(\mathcal{F},t\ell),$ where $\mathcal{F}$ is an object in $D^{b}(A),$ $t$ is a rational number and $\ell\in\NS(A)$ is a polarization. On these pairs we impose the relation given by identifying  $(\mathcal{F}\otimes L^{\otimes m},t\ell)$  with the pair $(\mathcal{F},(t+m)\ell)$ for every integer $m$ and for every $L$ representing $\ell.$ 

\begin{definicion}
A $\Q$-twisted object, written $\mathcal{F}\left<t\ell\right>,$ is the class of the pair $(\mathcal{F},t\ell)$ under the relation described in the previous paragraph.
\end{definicion}

For a $\Q$-twisted object $\mathcal{F}\left<t\ell\right>$ we would like to define its cohomological support loci $V^{i}(\mathcal{F}\left<t\ell\right>).$ To start, let $L$ be a line bundle representing $\ell$ and consider $a,b\in\Z$ with $b>0$ and $t=a/b.$ Write 
$$V^{i}(\mathcal{F},L,a,b) :=\left\{\alpha\in\hat{A} : H^{i}(\lambda_{b}^{\ast}\mathcal{F}\otimes L^{ab}\otimes P_{\alpha})\neq 0\right\},$$  
where $H^{i}$ stands for the sheaf (hyper)cohomology. These subsets depend on the choices of $L,a$ and $b.$ For instance, if $N$ is another representant of $\ell$ then $V^{i}(\mathcal{F},N,a,b)$ is just a traslation of $V^{i}(\mathcal{F},L,a,b).$ On the other hand, if we change $a,b$ for $ac,bc$ with $c$ a positive integer not divisible by $\operatorname{char} k$ and we consider a symmetric $L$, then: 
\begin{align*}
V^{i}(\mathcal{F},L,ac,bc) & = \left\{\alpha\in\hat{A} : H^{i}(\lambda_{bc}^{\ast}\mathcal{F}\otimes L^{c^{2}ab}\otimes P_{\alpha})\neq 0\right\} \\
& = \left\{\alpha\in\hat{A} : H^{i}(\lambda_{c}^{\ast}(\lambda_{b}^{\ast}\mathcal{F}\otimes L^{ab}\otimes P_{\gamma}))\neq 0\hspace{0.2cm}\text{for a (any) $\gamma\in \lambda_{c}^{-1}(\alpha)$}\right\} \\
& = \left\{\alpha\in\hat{A} : \bigoplus_{\gamma\in \lambda_{c}^{-1}(\alpha)} H^{i}(\lambda_{b}^{\ast}\mathcal{F}\otimes L^{ab}\otimes P_{\gamma})\neq 0\right\}, \\
& = \lambda_{c}\left(V^{i}(\mathcal{F},L,a,b)\right).
\end{align*}
where, to get the direct sum we used \cite[p.72]{Mumford} and the projection formula. However, we can make the following:

\begin{definicion}
For a $\Q$-twisted object $\mathcal{F}\left<t\ell\right>$ its $i$-th cohomological support loci $V^{i}\left(\mathcal{F}\left<t\ell\right>\right)$ is the class of $V^{i}(\mathcal{F},L,a,b),$ where $L$ is a representant of $\ell$ and $a,b$ are integers with $a/b = t$ and $\operatorname{gcd}(a,b)=1$, under the equivalence relation generated by translations and by taking direct images of prime to the characteristic multiplication isogenies (however, see Section 2.2 below). 
\end{definicion}

In particular, although $V^{i}(\mathcal{F}\left<t\ell\right>)$ is not a set, its dimension and codimension are well defined and they are denoted by $\dim V^{i}(\mathcal{F}\left<t\ell\right>)$ and $\mathrm{codim}_{\hat{A}}V^{i}(\mathcal{F}\left<t\ell\right>),$ respectively. In \cite[Section 5]{cohrank} the cited authors use these loci in order to define generic vanishing notions for $\Q$-twisted objects:

\begin{definicion}
We say that a $\Q$-twisted object $\mathcal{F}\left<t\ell\right>$ is 
\begin{enumerate}
\item\hspace{0.1cm} IT(0) if a (any) representant of $V^{i}(\mathcal{F}\left<t\ell\right>)$ is empty for $i\neq 0$
\item\hspace{0.1cm} GV if $\mathrm{codim}_{\hat{A}} V^{i}(\mathcal{F}\left<t\ell\right>)\geq i$ for every $i\geq 1$ 
\end{enumerate}
\end{definicion}

For example, if $\ell$ is a polarization, then from the index theorem (see \cite[III.16]{Mumford}, or Kodaira vanishing, in the caracteristic zero case) it follows that $\mathcal{O}_{A}\left<t\ell\right>$ is IT(0) for $t>0$. Another important example of these generic vanishing conditions regards the notion of separation of jets that we now recall:

\begin{definicion}
Let $X$ be a variety and $x\in X$ a closed point with ideal sheaf $I_{x}$. Given an integer $p\geq 0$ and a line bundle $L$ on $X$ we say that $L$ separates $p$-jets at $x$ if the restriction map 
$$H^{0}(X,L)\rightarrow H^{0}\left(X,L\otimes\mathcal{O}_{X}/I_{x}^{p+1}\right)$$
is surjective. 
\end{definicion}

In particular, $L$ separates $0$-jets at $x$ if and only if $x$ is not a basepoint of $L.$ Furthermore, if $x$ is not a basepoint of $L,$ then $L$ separates 1-jets at $x$ if and only if $L$ separates tangent vectors at $x$ (i.e the map between tangent spaces induced by the rational morphism $\phi_{L}:X --> \mathbb{P}H^{0}(L)$ is injective).

\begin{ejemplo}
\label{ejemplo-jets}
Let $A$ be an abelian variety, $x\in A$ be a closed point with ideal $I_{x}$ and $L$ an ample line bundle with class $\ell\in\NS(A).$ For a positive integer $p,$ taking cohomology to the exact sequence $0\rightarrow I_{x}^{p+1}\otimes L\rightarrow L\rightarrow\mathcal{O}_{A}/I_{x}^{p+1}\rightarrow 0$ we see that $H^{i}(I_{x}^{p+1}\otimes L\otimes P_{\alpha}) = 0$ for every $\alpha$ and every $i\geq 2$ and thus $V^{i}(I_{x}^{p+1}\left<\ell\right>)=\emptyset$ for $i\geq 2.$ On the other hand, we see that a representant for $V^{1}(I_{x}^{p+1}\left<\ell\right>)$ is given by the set
\begin{equation*}
\left\{\alpha\in\hat{A} : \hspace{0.2cm}\text{$L\otimes P_{\alpha}$ does not separate $p$-jets in $x$}\right\}.
\end{equation*}
Now, for an ample line bundle $L$ we have that $L\otimes P_{\alpha}\simeq t_{y}^{\ast}L$ for every $y\in\varphi_{L}^{-1}(\alpha).$ Using this, we see that, up to a traslation, the above set is equal to 
$$ \varphi_{L}\left(\left\{z\in A : L\hspace{0.1cm}\text{does not separate $p$-jets at $z$}\right\}\right).$$
It follows that $I_{x}^{p+1}\left<\ell\right>$ is GV if and only if there exists $z\in A$ such that $L$ separates $p$-jets at $z$ and it is IT(0) if and only if this happens for all $z\in A.$ In particular, $I_{x}\left<\ell\right>$ is always GV while it is IT(0) if and only if $L$ is basepoint free.
\end{ejemplo}

We have the following characterization of the behaviour of the generic vanishing conditions of $\mathcal{F}\left<t\ell\right>$ for varying $t$:

\begin{theorem}[\cite{cohrank}, Theorem 5.2]
\label{GVnoIT}
\begin{enumerate}
\item\hspace{0.1cm} A $\Q$-twisted object $\mathcal{F}\left<t_{0}\ell\right>$ is GV if and only if $\mathcal{F}\left<t\ell\right>$ is IT(0) for every $t>t_{0}.$
\item\hspace{0.1cm} If $\mathcal{F}\left<t_{0}\ell\right>$ is GV but not IT(0) then for every $t<t_{0}$ the $\Q$-twisted object $\mathcal{F}\left<t\ell\right>$ is not GV
\item\hspace{0.1cm} $\mathcal{F}\left<t_{0}\ell\right>$ is IT(0) if and only if $\mathcal{F}\left<(t_{0}-\eta)\ell\right>$ is IT(0) for every $\eta>0$ small enough
\end{enumerate}
\end{theorem}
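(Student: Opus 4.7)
The plan is to study rational $\Q$-twists $\mathcal{F}\langle (a/b)\ell\rangle$ through their integral representatives $\lambda_b^{\ast}\mathcal{F}\otimes L^{ab}$ and to exploit positivity principles on those representatives. For two rational parameters $t_0<t$ with a common denominator $N$, the representatives $\mathcal{G}_{t_0}$ and $\mathcal{G}_t$ differ by a positive power of $L$, namely $\mathcal{G}_t\simeq\mathcal{G}_{t_0}\otimes L^{(A-A_0)N}$ where $t_0=A_0/N$ and $t=A/N$. The central positivity input is the Pareschi--Popa principle: if $\mathcal{G}$ is GV on $A$ and $H$ is ample, then $\mathcal{G}\otimes H$ is IT(0). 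Together with the fact that pullback along a multiplication isogeny $\lambda_n$ preserves both GV and IT(0) (since every $P_\alpha$ equals $\lambda_n^{\ast}P_\beta$ for some $\beta$, and $H^i(\lambda_n^{\ast}\mathcal{G}\otimes P_\alpha)$ decomposes via the projection formula and $\lambda_{n,\ast}\mathcal{O}_A=\bigoplus_{\gamma\in\hat{A}[n]}P_\gamma$ as a sum of cohomologies of twists $\mathcal{G}\otimes P_{\beta+\gamma}$), this gives a flexible toolkit for moving between rational parameters.

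For part (1), the forward implication is immediate: for $t>t_0$, apply Pareschi--Popa to $\mathcal{G}_t=\mathcal{G}_{t_0}\otimes L^{(A-A_0)N}$. The converse is the content-bearing direction: assuming IT(0) for every rational $t>t_0$, one wants $\mathrm{codim}_{\hat{A}}V^i(\mathcal{F}\langle t_0\ell\rangle)\geq i$. A semicontinuity argument along a sequence $t_n\downarrow t_0$ chosen with a coherent sequence of common denominators does the job: the loci $V^i(\mathcal{G}_{t_n})$ are empty, and their ``limit'' locus on $\hat{A}$ must have the codimension demanded by GV at $t_0$.

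Part (2) is then a contrapositive of (1): if $\mathcal{F}\langle t\ell\rangle$ were GV for some $t<t_0$, part (1) would make $\mathcal{F}\langle t_0\ell\rangle$ IT(0), contradicting the hypothesis. The ``if'' direction of (3) is equally easy, since IT(0) at some $t_0-\eta$ is in particular GV there, and (1) then yields IT(0) at $t_0$. The substantive ``only if'' of (3) reduces to the strict improvement that IT(0) at $t_0$ forces $\mathcal{F}\langle t_0'\ell\rangle$ to be GV for some rational $t_0'<t_0$; once such slack is available, part (1) applied at $t_0'$ delivers IT(0) on $(t_0',\infty)\cap\Q$, which contains a left neighborhood of $t_0$.

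The main obstacle is this strict improvement in (3). Openness of IT(0) on the right is built into Pareschi--Popa, but passing from $t_0$ to $t_0-\eta$ corresponds to tensoring the representative by a \emph{negative} power of $L$, which destroys any naive positivity argument. A natural strategy is to offset the negative twist by simultaneously pulling back through $\lambda_n$: writing $t_0=a_0/b_0$ and $t=(a_0n-1)/(b_0n)$, one finds $\mathcal{G}_t\simeq\lambda_n^{\ast}\mathcal{G}_{t_0}\otimes L^{-b_0n}$, whose net numerical twist is $a_0b_0n^{2}-b_0n$ and therefore grows quadratically in $n$. Closing the argument requires showing that this quadratic excess positivity is enough to upgrade the IT(0) status of $\lambda_n^{\ast}\mathcal{G}_{t_0}$ past the negative shift for $n$ large; it is precisely here that the openness of IT(0) in the $\Q$-twist parameter — rather than just numerical positivity — must be established, and this is the point where the elementary representative manipulations no longer suffice.
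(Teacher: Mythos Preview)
The paper does not prove this theorem: it is quoted as \cite{cohrank}, Theorem~5.2, with no argument supplied. So there is no in-paper proof to compare against; I can only assess your sketch on its own terms.

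Two parts of your sketch are genuinely incomplete.

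\textbf{Converse of (1).} Your ``semicontinuity argument along a sequence $t_n\downarrow t_0$'' is not an argument. The integral representatives $\mathcal{G}_{t_n}=\lambda_{N_n}^{\ast}\mathcal{F}\otimes L^{A_nN_n}$ do not sit in any flat family over a base to which upper semicontinuity of cohomology applies: as the denominator grows you are pulling back by different isogenies, and there is no limit object that recovers $V^i(\mathcal{F}\langle t_0\ell\rangle)$ from the empty loci $V^i(\mathcal{G}_{t_n})$. Something more is needed --- in \cite{cohrank} the mechanism is the Fourier--Mukai/Hacon characterization (GV $\Leftrightarrow$ the derived dual of $\Phi_{\mathcal{P}}(\mathcal{F})$ is a sheaf; IT(0) $\Leftrightarrow$ $\Phi_{\mathcal{P}}(\mathcal{F})$ is a locally free sheaf in degree $0$), which converts the question at a single parameter $t_0$ into a statement about a single object on $\hat{A}$ rather than a limit of loci.

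\textbf{``Only if'' of (3).} You correctly isolate the difficulty and then, in your last sentence, concede that your approach does not close it. The observation that the net exponent $a_0b_0n^2-b_0n$ grows quadratically is not decisive: you are comparing $\lambda_n^{\ast}\mathcal{G}_{t_0}$ (which is IT(0)) with $\lambda_n^{\ast}\mathcal{G}_{t_0}\otimes L^{-b_0n}$, and the negative shift $-b_0n$ grows with $n$ as well, so there is no fixed ample correction to absorb. The actual proof again passes through Fourier--Mukai: IT(0) at $t_0$ means the transform is a locally free sheaf, and the openness in $t$ is deduced from that side (equivalently, via the local polynomial structure of the cohomological rank functions established in \cite{cohrank}). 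Without invoking such a characterization, I do not see how to finish along the lines you propose.

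Parts (2) and the easy directions of (1) and (3) are fine as you wrote them.
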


There is also the following important result of preservation of vanishing:

\begin{theorem}[\cite{RegAVIII}, Theorem 3.2 and Remark 3.3 ]
\label{preservacion}
 Let $\mathcal{F},\mathcal{G}$ coherent sheaves on an abelian variety and let $\ell\in\NS(A)$ be a polarization. Write $\mathcal{F}\hspace{0.1cm}\underline{\otimes}\hspace{0.1cm}\mathcal{G}\in D^{b}(A)$ for the \emph{derived} tensor product of $\mathcal{F}$ and $\mathcal{G}.$ 

Then if both $\mathcal{F}\left<t\ell\right>$ and $\mathcal{G}\left<s\ell\right>$ are GV then $(\mathcal{F}\hspace{0.1cm}\underline{\otimes}\hspace{0.1cm}\mathcal{G})\left<(t+s)\ell\right>$ is  GV.
\end{theorem}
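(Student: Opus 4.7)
My plan is to reduce the $\Q$-twisted statement to the classical (untwisted) preservation of vanishing for GV complexes on an abelian variety, by clearing denominators and pulling back through a multiplication isogeny.

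First, I would choose a common denominator $n>0$ and integers $a,b$ so that $t=a/n$ and $s=b/n$, and fix a line bundle $L$ representing $\ell$. By the very definition of $V^{i}(\mathcal{F}\left<t\ell\right>)$ recalled above, the hypothesis that $\mathcal{F}\left<t\ell\right>$ is GV is equivalent to the assertion that the (honest) complex $\lambda_{n}^{\ast}\mathcal{F}\otimes L^{an}\in D^{b}(A)$ is classically GV, and similarly $\lambda_{n}^{\ast}\mathcal{G}\otimes L^{bn}$ is classically GV.

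Next, I would invoke the classical preservation of vanishing for derived tensor products of GV complexes on an abelian variety (the Pareschi--Popa theorem, extended to derived tensor products of complexes by Pareschi) applied to the two classically GV complexes above. Since $\lambda_{n}$ is étale, its pullback is exact and commutes with the derived tensor product, so this yields that
\[
(\lambda_{n}^{\ast}\mathcal{F}\otimes L^{an})\ \underline{\otimes}\ (\lambda_{n}^{\ast}\mathcal{G}\otimes L^{bn})\ \simeq\ \lambda_{n}^{\ast}(\mathcal{F}\ \underline{\otimes}\ \mathcal{G})\otimes L^{(a+b)n}
\]
is GV in the classical sense. Unwinding the definition of $V^{i}$ once more, with parameters $a'=a+b$ and $b'=n$ representing $t+s$, this is precisely the statement that the $\Q$-twisted object $(\mathcal{F}\ \underline{\otimes}\ \mathcal{G})\left<(t+s)\ell\right>$ is GV, which is what we want.

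The genuinely non-trivial input is the classical preservation of vanishing, whose proof goes through Fourier--Mukai methods together with a cohomological/duality characterization of the GV property; the manipulations above are essentially bookkeeping. A minor technical point is to check that the reduction is independent of the choices of $n$, $L$ and the pair $(a,b)$ representing $t$; this, however, is contained in the direct-sum calculation already performed in Section 2.1, which shows that passing from $(a,b)$ to $(ac,bc)$ only transforms the loci by the surjection $\lambda_{c}$ (and that changing $L$ shifts them by a translation), hence preserves their codimensions.
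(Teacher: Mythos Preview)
Your proposal is correct, but note that the paper does not give its own proof of this statement: it is quoted verbatim from \cite{RegAVIII}, Theorem~3.2 and Remark~3.3, so there is nothing in the present paper to compare against. Your argument is precisely the natural reduction of the $\Q$-twisted assertion to the classical (untwisted) preservation of vanishing, which is the substance of the cited reference; the bookkeeping you describe (clearing denominators, pulling back by $\lambda_{n}$, and checking independence of the choices of $n$, $L$ and the representation of $t,s$) is exactly how the $\Q$-twisted GV notion is set up in Section~2.1.
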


The conditions defined in this subsection can also be reformulated in terms of the Fourier-Mukai transform with kernel $\mathcal{P},$ which we briefly survey in the following subsection. 

\subsection{Fourier-Mukai transform}

If we write $p_{A},p_{\hat{A}}$ for the product projections from $A\times\hat{A},$ then we can consider the Fourier-Mukai functor $\Phi_{\mathcal{P}}: D^{b}(A)\rightarrow D^{b}(\hat{A})$ given by 
$$\mathcal{F}\mapsto Rp_{\hat{A}\ast}\left(p_{A}^{\ast}\mathcal{F}\otimes\mathcal{P}\right),$$
which by \cite{Mukai} is an equivalence of categories. For an object $\mathcal{F}\in D^{b}(A)$ we will write $R^{i}\Phi_{\mathcal{P}}(\mathcal{F})$ for the $i$-th cohomology sheaf of $\Phi_{\mathcal{P}}(\mathcal{F}).$ 

From the cohomology and base change theorem \cite[Théorème (7.7.5) II]{EGAIII-II} we have that
$$\mathrm{Supp}\hspace{0.1cm} R^{i}\Phi_{\mathcal{P}}(\lambda_{b}^{\ast}\mathcal{F}\otimes L^{ab})\subseteq\left\{\alpha\in\hat{A} : H^{i}(\lambda_{b}^{\ast}\mathcal{F}\otimes L^{ab}\otimes P_{\alpha})\neq 0\right\}.$$ 
In particular, we see that if $\mathcal{F}\left<\frac{a}{b}\ell\right>$ is IT(0) then $R^{i}\Phi_{\mathcal{P}}(\lambda_{b}^{\ast}\mathcal{F}\otimes L^{ab})=0$ for every $i\neq 0.$ Moreover, we have the following equivalence (\cite[Lemma 3.6]{PPo3}): 
$$\operatorname{codim}_{\hat{A}} \operatorname{Supp} R^{i}\Phi_{\mathcal{P}}(\lambda_{b}^{\ast}\mathcal{F}\otimes L^{ab})\geq i \iff \operatorname{codim}_{\hat{A}} V^{i}(\mathcal{F},L,a,b) \geq i.$$
On the other hand, if $c$ is a positive integer (not necessarily coprime to the characteristic of the base field), then by \cite[(3.4)]{Mukai} we have that 
$$\Phi_{\mathcal{P}}(\lambda_{bc}^{\ast}\mathcal{F}\otimes L^{abc^2})\simeq\Phi_{\mathcal{P}}(\lambda_{c}^{\ast}(\lambda_{b}^{\ast}\mathcal{F}\otimes L^{ab}))\simeq\lambda_{c\ast}\Phi_{\mathcal{P}}(\lambda_{b}^{\ast}\mathcal{F}\otimes L^{ab}).$$
In particular, as $\lambda_{c}$ is finite, we see that the following conditions are equivalent:
\begin{enumerate}[label = \alph*)]
\item \hspace{0.1cm} $\mathcal{F}\left<t\ell\right>$ is GV 
\item \hspace{0.1cm} $\operatorname{codim}_{\hat{A}} V^{i}(\mathcal{F},L,a,b)\geq i$ for every $i\geq 1$ and a (any) representation $t=a/b$ 
\item \hspace{0.1cm} $\operatorname{codim}_{\hat{A}} \operatorname{Supp} R^{i}\Phi_{\mathcal{P}}(\lambda_{b}^{\ast}\mathcal{F}\otimes L^{ab}) \geq i$ for every $i$ and a (any) representation $t=a/b$
\end{enumerate}

\begin{ejemplo}
\label{ejemplo-suc-jets}
Applying $\Phi_{\mathcal{P}}(-)$ to the exact sequence $0\rightarrow L\otimes I_{x}^{p}\rightarrow L\rightarrow L\otimes\mathcal{O}_{A}/I_{x}^{p}\rightarrow 0$ we get the exact sequence
$$0\rightarrow R^{0}\Phi_{\mathcal{P}}(I_{x}^{p}\otimes L)\rightarrow R^{0}\Phi_{\mathcal{P}}(L)\rightarrow R^{0}\Phi_{\mathcal{P}}(\mathcal{O}_{A}/I_{x}^{p})\rightarrow R^{1}\Phi_{\mathcal{P}}(I_{x}^{p}\otimes L)\rightarrow 0,$$
and the $R^1$ at the right is supported on the points where $L$ fails to separate $(p-1)$-jets.
\end{ejemplo}

\subsection{Cohomological rank functions}

For varying $t\in\Q,$ the fact that whether $V^{i}(\mathcal{F}\left<t\ell\right>) = \hat{A}$  or not,  can be expressed using the \emph{cohomological rank functions} introduced by Jiang-Pareschi in \cite{cohrank}. In this subsection we recall their definition and fix some notation
\begin{definicion}
\label{coh-rk}
Let $A$ be a $g$-dimensional abelian variety, $\ell\in\NS(A)$ an ample class and $\mathcal{F}\in D^{b}(A).$ Given $i\in\Z,$ the $i$-th cohomological rank function of $\mathcal{F}$ with respect to $\ell$ is the function $\Q\rightarrow\Q$ given by 
$$\frac{a}{b}\longrightarrow h^{i}\left(\mathcal{F}\left<\frac{a}{b}\ell\right>\right):= \frac{1}{b^{2g}}\cdot\mathrm{min}\left\{h^{i}(\lambda_{b}^{\ast}\mathcal{F}\otimes L^{ab}\otimes P_{\alpha}) : \alpha\in\hat{A}\right\},$$
where $L$ is a (any) line bundle representing $\ell.$
\end{definicion}
 
\begin{comentario}
\label{ceroGV}
The minimum involved in the definition is actually the generic value of $h^{i}(\lambda_{b}^{\ast}\mathcal{F}\otimes L^{ab}\otimes P_{\alpha})$ for varying $\alpha.$ In particular, from Example \ref{ejemplo-jets} we see that $I_{x}^{p}\left<t\ell\right>$ is GV if and only if $h^{1}(I_{x}^{p}\left<t\ell\right>) = 0.$ 
\end{comentario}

One of the main properties of the cohomological rank functions is the following fundamental transformation formula with respect to the Fourier-Mukai functor:

\begin{prop}[\cite{cohrank}, Proposition 2.3]
\label{transformation-formula}
For $\mathcal{F}$ and $\ell$ as in Definition \ref{coh-rk} above and $t>0,$ the following equalities hold:
\begin{enumerate}[label=\alph*)]
\item\hspace{0.1cm} $h^{i}(\mathcal{F}\left<-t\ell\right>) = \frac{t^{g}}{\chi(\ell)}\cdot h^{i}\left((\varphi_{\ell}^{\ast}\Phi_{\mathcal{P}}(\mathcal{F}))\left<\frac{1}{t}\ell\right>\right)$
\item\hspace{0.1cm} $h^{i}(\mathcal{F}\left<t\ell\right>) = \frac{t^{g}}{\chi(\ell)}\cdot h^{g-i}\left((\varphi_{\ell}^{\ast}\Phi_{\mathcal{P}^{\vee}}(\mathcal{F}^{\vee}))\left<\frac{1}{t}\ell\right>\right),$
where $\mathcal{F}^{\vee}$ stands for the \emph{derived} dual $R\shom(\mathcal{F},\mathcal{O}_{A}).$
\end{enumerate}
\end{prop}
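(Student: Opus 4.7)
The plan is to unwind the definition of $h^{i}(\mathcal{F}\langle -t\ell\rangle)$ and translate it, via Mukai's equivalence, into a cohomology computation on $\hat{A}$ involving $\Phi_{\mathcal{P}}(\mathcal{F})$, and then transport the result back to $A$ through the isogeny $\varphi_{\ell}$. Fix a symmetric line bundle $L$ representing $\ell$ and write $t = a/b$ with $a,b$ positive integers. By definition,
$$h^{i}(\mathcal{F}\langle -t\ell\rangle) = \frac{1}{b^{2g}}\, h^{i}(A,\lambda_{b}^{\ast}\mathcal{F}\otimes L^{-ab}\otimes P_{\alpha})$$
for a generic $\alpha\in\hat{A}$, and this is what we aim to relate to $h^{i}\bigl((\varphi_{\ell}^{\ast}\Phi_{\mathcal{P}}(\mathcal{F}))\langle \tfrac{1}{t}\ell\rangle\bigr)$.

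The computation rests on three standard compatibilities: (i) commutation of $\Phi_{\mathcal{P}}$ with isogenies, giving $\Phi_{\mathcal{P}}(\lambda_{b}^{\ast}\mathcal{F})\simeq \lambda_{b,\ast}\Phi_{\mathcal{P}}(\mathcal{F})$; (ii) Mukai's identity $\varphi_{L}^{\ast}\Phi_{\mathcal{P}}(L)\simeq H^{0}(A,L)\otimes L^{-1}$, which exhibits $\Phi_{\mathcal{P}}(L)$ as locally free of rank $\chi(\ell)$; and (iii) the exchange under $\Phi_{\mathcal{P}}$ of tensor product with a power of $L$ and Pontryagin convolution by (a Fourier transform of) $L^{\pm 1}$ on the dual side. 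Combining these with the projection formula for $\varphi_{\ell}$, and using $\deg\varphi_{\ell}=\chi(\ell)^{2}$, one rewrites the generic cohomology of $\lambda_{b}^{\ast}\mathcal{F}\otimes L^{-ab}\otimes P_{\alpha}$ as the cohomology of a $\mathbb{Q}$-twist of $\varphi_{\ell}^{\ast}\Phi_{\mathcal{P}}(\mathcal{F})$ by a suitable power of $L$; matching exponents forces that power to correspond to $\tfrac{1}{t}\ell$, while tracking the degrees produces the normalization $t^{g}/\chi(\ell)$.

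For part b), the approach is first to apply Grothendieck-Serre duality on $A$ in order to rewrite $h^{i}(\mathcal{F}\langle t\ell\rangle) = h^{g-i}(\mathcal{F}^{\vee}\langle -t\ell\rangle)$, with $\mathcal{F}^{\vee}=R\shom(\mathcal{F},\mathcal{O}_{A})$, and then to invoke part a) applied to $\mathcal{F}^{\vee}$. The duality between Fourier-Mukai kernels replaces $\mathcal{P}$ by $\mathcal{P}^{\vee}$, so $\Phi_{\mathcal{P}}(\mathcal{F}^{\vee})$ becomes $\Phi_{\mathcal{P}^{\vee}}(\mathcal{F}^{\vee})$ on the dual side, yielding the stated formula.

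The main obstacle is the careful bookkeeping of isogeny degrees and of the exponents of $L$: one has to check that the power of $L$ on the dual side collapses to $1/t$ (and not to some close relative) and that the accumulated multiplicative factor is exactly $t^{g}/\chi(\ell)$. This is cleanest when $L$ is chosen symmetric, so that the pullback of $\mathcal{P}$ by both $\varphi_{L}\times\mathrm{id}$ and $\lambda_{b}\times\mathrm{id}$ behaves predictably, and by first reducing to the case where $\mathcal{F}$ is a single coherent sheaf before extending to complexes via hypercohomology.
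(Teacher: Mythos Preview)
The paper does not give its own proof of this proposition: it is quoted verbatim as \cite{cohrank}, Proposition~2.3, and immediately after the statement the paper moves on to Section~3. There is therefore no argument in the paper to compare your proposal against.

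That said, your outline is exactly the strategy used in the original reference: unwind the definition of $h^{i}(\mathcal{F}\langle -t\ell\rangle)$, push through $\Phi_{\mathcal{P}}$ using the commutation $\Phi_{\mathcal{P}}\circ\lambda_{b}^{\ast}\simeq\lambda_{b,\ast}\circ\Phi_{\mathcal{P}}$ and the identity $\varphi_{L}^{\ast}\Phi_{\mathcal{P}}(L)\simeq H^{0}(L)\otimes L^{-1}$, then pull back along $\varphi_{\ell}$ and collect the degree factors. Your reduction of part~b) to part~a) via Grothendieck--Serre duality and the exchange $\Phi_{\mathcal{P}}\leftrightarrow\Phi_{\mathcal{P}^{\vee}}$ under dualization is also the standard route. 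What you have written is a correct high-level sketch rather than a proof; the genuine work, as you acknowledge, is the exponent and degree bookkeeping, which you have not carried out. If you want a self-contained argument you would need to actually perform that computation, but for the purposes of this paper the citation suffices.
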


Moreover, in Theorem 3.2 from \emph{loc-cit} it is also shown that cohomological rank functions can be extended to continous functions $\R\rightarrow\R.$

\section{Vanishing thresholds}

In this section we introduce the notion of vanishing thresholds for coherent sheaves. Afterwards, we give some examples and at the end we prove  Theorem \ref{A} 1) from the introduction. 

\begin{definicion}
\label{varios-thresholds}
\begin{enumerate}
\item\hspace{0.1cm} Let $A$ be an abelian variety and $\mathcal{F}\in D^{b}(A).$ Let $\ell$ be a polarization on $A.$ The vanishing threshold of $\mathcal{F}$ with respect to $\ell$ is the real number
$$\nu_{\ell}(\mathcal{F}):= \mathrm{inf}\left\{t\in\Q_{\geq 0} : \mathcal{F}\left<t\ell\right>\hspace{0.1cm}\text{satisfies IT(0)}\right\}.$$
\item\hspace{0.1cm} Given a closed subscheme $Z$ of $A$ and a non-negative integer $p$ we write 
$$\beta_{p}(Z,\ell) = \nu_{\ell}\left(I_{Z}^{p+1}\right),$$
where $I_{Z}$ is the ideal sheaf of $Z.$
\item\hspace{0.1cm} The $p$-jets separation threshold of $\ell$ is the number
$$\beta_{p}(\ell): = \beta_{p}(\{0\},\ell),$$
where we consider the reduced scheme structure on the set $\{0\}.$
\end{enumerate}
\end{definicion}
We note that, by definition, we have that $\nu_{n\ell}(\mathcal{F}) = n^{-1}\nu_{\ell}(\mathcal{F}).$ Also, thanks to Theorem \ref{GVnoIT} we have the following equality:
$$\nu_{\ell}(\mathcal{F}) = \mathrm{inf}\left\{t\in\Q_{\geq 0} : \mathcal{F}\left<t\ell\right>\hspace{0.1cm}\text{is GV}\right\}.$$

\begin{ejemplo}
\label{bpf}
We have that $\beta_{0}(\ell)$ is the basepoint-freeness threshold $\beta(\ell)$ considered in \cite[Section 8]{cohrank}. In particular $\beta_{0}(\ell)\leq 1$ and $\beta_{0}(\ell)=1$ if and only if $\ell$ has base points. More generally, from Theorem \ref{GVnoIT} we see that 
\begin{enumerate}
\item\hspace{0.1cm} $\beta_{p}(\ell)\leq 1$ if and only if there exists a representant $L$ of $\ell$ such that $L$ separates $p$-jets at 0. Arguing as in Example \ref{ejemplo-jets}, we see that this is equivalent to say that for any representant $L$ of $\ell$ there exists a point such that $L$ separates $p$-jets at such a point. 
\item\hspace{0.1cm} $\beta_{p}(\ell)<1$ if and only if for every representant $L$ of $\ell,$ $L$ separates $p$-jets at 0. Equivalently, any representant $L$ of $\ell$ separates $p$-jets at every point of $A.$
\end{enumerate} 
\end{ejemplo}

\begin{ejemplo}
\label{principal}
Consider a principal polarization $\theta$ and $\Theta$ a symmetric divisor representing it. If $\theta$ is indecomposable, then $\mathcal{O}_{A}(2\Theta)$ fails to separate 1-jets (i.e tangent vectors) just in the 2-torsion points of $A.$ That is, $I_{0}^{2}\left<2\theta\right>$ is GV but not IT(0) which, by Remark \ref{ceroGV} and Theorem \ref{GVnoIT}, means that $\beta_{1}(\theta) = 2.$ From \cite[p.99]{cav}, the same conclusion holds if $\theta$ is decomposable.
\end{ejemplo}

\begin{ejemplo}
Let $(A,\theta)$ be a $g$-dimensional principally polarized abelian variety and $\tau$ a length two closed subscheme supported in the origin. Again in this case, we have that 
$$\beta_{0}(\tau,\theta) = \inf\left\{t\in\Q_{\geq 0} : h^{1}(I_{\tau}\left<t\theta\right>) = 0\right\}.$$ 
Let $\Theta$ be a symmetric divisor representing $\theta$ and identify $A$ with $\hat{A}$ via $\varphi_{\theta}.$ We claim that $R^{j}\Phi_{\mathcal{P}^{\vee}}(I_{\tau}(\Theta)^{\vee}) = 0$ for $j\leq g-2,$ while $R^{g-1}\Phi_{\mathcal{P}^{\vee}}(I_{\tau}(\Theta)^{\vee}) = \mathcal{O}_{A}(-\Theta).$ To see this, notice that $I_{0/\tau}\simeq k(0)$ and hence we have an exact sequence 
$$0 \rightarrow I_{\tau}(\Theta)\rightarrow I_{0}(\Theta) \rightarrow k(0)\rightarrow 0.$$
As $\Phi_{\mathcal{P}^{\vee}}(I_{0}(\Theta)^{\vee})\simeq\mathcal{O}_{\Theta}(\Theta)[-g]$ and $\Phi_{\mathcal{P}^{\vee}}(k(0))\simeq\mathcal{O}_{A}[-g],$ we obtain that $R^{j}\Phi_{\mathcal{P}^{\vee}}(I_{\tau}(\Theta)^{\vee}) = 0$ for $j\leq g-2$ and an exact sequence
$$0\rightarrow R^{g-1}\Phi_{\mathcal{P}^{\vee}}(I_{\tau}(\Theta)^{\vee})\rightarrow\mathcal{O}_{A}\rightarrow\mathcal{O}_{\Theta}(\Theta)\rightarrow R^{g}\Phi_{\mathcal{P}^{\vee}}(I_{\tau}(\Theta)^{\vee})\rightarrow 0.$$
It follows that $R^{g-1}\Phi_{\mathcal{P}^{\vee}}(I_{\tau}(\Theta)^{\vee})\simeq\ker[\mathcal{O}_{A}\rightarrow\mathcal{O}_{\Theta}(\Theta)]\simeq\mathcal{O}_{A}(-\Theta).$ Now, for this example, the value of $R^{g}$ does not matter (however, see \cite[Corollary 4.3]{GV-min-GV}). Let $t\in(0,1]\cap\Q.$ By the transformation formula  (Proposition \ref{transformation-formula}b)) and the degeneration of the spectral sequence computing the hypercohomology we have that
$$h^{1}(I_{\tau}\left<(1+t)\theta\right>) = h^{1}(I_{\tau}(\Theta)\left<t\theta\right>) = t^{g}\cdot h^{g-1}\left(\Phi_{\mathcal{P}^{\vee}}(I_{\tau}(\Theta)^{\vee})\left<\frac{1}{t}\theta\right>\right) = t^{g}\cdot h^{0}\left(R^{g-1}\Phi_{\mathcal{P}^{\vee}}(I_{\tau}(\Theta)^{\vee})\left<\frac{1}{t}\theta\right>\right).$$ 
From our previous calculation we obtain then that
$$h^{1}(I_{\tau}\left<(1+t)\theta\right>)= t^{g}\cdot h^{0}\left(\mathcal{O}_{A}(-\Theta)\left<\frac{1}{t}\theta\right>\right) = t^{g}\cdot\left(\frac{1}{t}-1\right)^{g} = (1-t)^{g}.$$
As clearly $h^{1}(I_{\tau}\left<t\theta\right>)\neq 0$ for $t\in[0,1),$ we conclude that $$\beta_{0}(\tau,\theta) = 2.$$
\end{ejemplo}

\begin{ejemplo}
\label{curva}
Let $C$ be a smooth curve of genus $g$ and $u:C\hookrightarrow\operatorname{Jac}C$ an Abel-Jacobi map. By \cite[Theorem 7.5 and Proposition 7.6]{cohrank} we know that $h^{1}\left(\mathcal{O}_{u(C)}\left<t\theta\right>\right) = t^{g}-gt+(g-1)>0$ for $t\in[0,1).$ On the other hand, from the exact sequence
$$0\rightarrow\lambda_{b}^{\ast}I_{u(C)}\otimes\mathcal{O}_{\operatorname{Jac}(C)}(ab\Theta)\rightarrow\mathcal{O}_{\operatorname{Jac}(C)}(ab\Theta)\rightarrow\lambda_{b}^{\ast}\mathcal{O}_{u(C)}(ab\Theta)\rightarrow 0$$
it follows that 
$$h^{2}\left(I_{u(C)}\left<t\theta\right>\right) = h^{1}\left(\mathcal{O}_{u(C)}\left<t\theta\right>\right)>0\hspace{0.2cm}\text{for $t\in[0,1)$}.$$
This means that 
$$H^{2}\left(\lambda_{b}^{\ast}I_{u(C)}\otimes\mathcal{O}_{\operatorname{Jac}(C)}(ab\Theta)\otimes P_{\alpha}\right)\neq 0\hspace{0.3cm}\text{for every $\alpha\in\hat{A}$ and $a<b$}$$
and hence $I_{u(C)}\left<t\theta\right>$ is not GV for every $t<1.$ On the other hand, in \cite[Proposition 4.4]{RegAVI} it is shown that $I_{u(C)}\left<\theta\right>$ is GV and thus $\beta_{0}(u(C),\theta) = 1.$ 

Now, by \cite[Theorem 6.1]{GV-min-GV} this is esentially the only example of a curve inside an abelian variety whose GV-threshold is at most one. More precisely, if $C$ is a smooth curve that generates an abelian variety $A$ and $I_{C}\left<\theta\right>$ is GV for a principal polarization $\theta,$ then $C$ has minimal class and thus by Matsusaka's criterion \cite[Theorem 3]{Mats} it follows that $(A,\theta)\simeq(\operatorname{Jac}C,\theta_{C}).$ 

\end{ejemplo}

A couple of formal properties are the following:

\begin{prop}
\begin{enumerate}[label = \alph*)]
\item\hspace{0.1cm} If $Z$ is a smooth closed subscheme of an abelian variety $A$ then 
$$\beta_{p}(Z,\ell)\leq\mathrm{max}\left\{\beta_{p+1}(Z,\ell),\nu_{\ell}\left(\operatorname{Sym}^{p+1}N_{Z/A}^{\vee}\right)\right\},$$  
where $N_{Z/A}^{\vee}$ is the conormal sheaf of $Z$ in $A.$ 
\item\hspace{0.1cm} The sequence $\{\beta_{p}(\ell)\}_{p\in\Z_{\geq 0}}$ of jets-separation thresholds is unbounded and not decreasing
\end{enumerate}
\end{prop}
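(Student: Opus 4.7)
The plan for part (a) is to exploit the conormal filtration of the smooth subscheme $Z\subset A$. Smoothness of $Z$ inside the smooth ambient $A$ furnishes a canonical isomorphism $I_{Z}^{p+1}/I_{Z}^{p+2}\simeq\operatorname{Sym}^{p+1}N_{Z/A}^{\vee}$ and hence a short exact sequence
$$0\to I_{Z}^{p+2}\to I_{Z}^{p+1}\to\operatorname{Sym}^{p+1}N_{Z/A}^{\vee}\to 0.$$
For any rational $t=a/b$ strictly larger than $\max\{\epsilon_{p+1}(Z,\ell),\nu_{\ell}(\operatorname{Sym}^{p+1}N_{Z/A}^{\vee})\}$, I would pull back by the flat isogeny $\lambda_{b}$ and twist by $L^{ab}\otimes P_{\alpha}$; the long exact sequence in cohomology then forces the higher cohomology of $\lambda_{b}^{\ast}I_{Z}^{p+1}\otimes L^{ab}\otimes P_{\alpha}$ to vanish for every $\alpha\in\hat{A}$, showing that $I_{Z}^{p+1}\langle t\ell\rangle$ is IT(0). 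Taking the infimum over such $t$ then yields the stated inequality.

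For the non-decreasing claim in (b), I would rerun the same short exact sequence argument with $Z=\{0\}$ equipped with its reduced structure. Now the quotient $I_{0}^{p+1}/I_{0}^{p+2}$ is a finite-length skyscraper sheaf supported at $0$, whose higher cohomology automatically vanishes after every twist. Consequently, whenever $I_{0}^{p+2}\langle t\ell\rangle$ is IT(0) so is $I_{0}^{p+1}\langle t\ell\rangle$, and taking the infimum gives $\epsilon_{p}(\ell)\leq\epsilon_{p+1}(\ell)$.

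For unboundedness I plan a crude dimension count. Fix a positive integer $n$ and a representative $L$ of $\ell$; ampleness gives $h^{0}(A,nL\otimes P_{\alpha})=n^{g}\chi(L)$, independent of $\alpha\in\hat{A}$ and polynomial in $n$ of degree $g$. On the other hand, separating $p$-jets at a point $x\in A$ requires surjectivity of the restriction map $H^{0}(A,nL\otimes P_{\alpha})\to H^{0}(A,nL\otimes P_{\alpha}\otimes\mathcal{O}_{A}/I_{x}^{p+1})$, and the target has dimension $\binom{g+p}{g}$, polynomial in $p$ of degree $g$. Fixing $n$ and letting $p$ grow, eventually $\binom{g+p}{g}>n^{g}\chi(L)$, so no twist $nL\otimes P_{\alpha}$ can separate $p$-jets at any point of $A$. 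By Example \ref{bpf}(1), this forces $\epsilon_{p}(n\ell)>1$, i.e., $\epsilon_{p}(\ell)>n$. Since $n$ is arbitrary, the sequence $\{\epsilon_{p}(\ell)\}$ is unbounded.

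I do not foresee any substantive obstacle: both parts reduce to the same short exact sequence and long exact sequence in cohomology, supplemented in (b) by a comparison of polynomials of degree $g$. The only technical care needed is routine bookkeeping with the $\Q$-twisted formalism---choosing rational representatives $t=a/b$ and commuting pullback by $\lambda_{b}$ with the exact sequence---both harmless since $\lambda_{b}$ is flat and tensoring with $L^{ab}\otimes P_{\alpha}$ is exact.
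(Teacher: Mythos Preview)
Your proposal is correct and follows essentially the same approach as the paper: both use the conormal exact sequence $0\to I_{Z}^{p+2}\to I_{Z}^{p+1}\to\operatorname{Sym}^{p+1}N_{Z/A}^{\vee}\to 0$ for part (a), deduce the non-decreasing claim by specializing to $Z=\{0\}$, and obtain unboundedness via the dimension count $h^{0}(nL\otimes P_{\alpha})$ versus $h^{0}(\mathcal{O}_{A}/I_{0}^{p+1})$. The only cosmetic difference is that the paper packages the long-exact-sequence step in (a) by citing a lemma of Ito on GV sheaves in short exact sequences, whereas you argue IT(0) directly; by Theorem~\ref{GVnoIT} these are equivalent for the purpose of computing thresholds.
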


\begin{proof}

a) We want to prove that $I_{Z}^{p}\left<t\ell\right>$ is GV whenever both $I_{Z}^{p+1}\left<t\ell\right>$ and $(\operatorname{Sym}^{p}N_{Z/A}^{\vee})\left<t\ell\right>$ are GV.  As $Z$ is smooth, we have that 
$$I_{Z}^{p}/I_{Z}^{p+1}\simeq \operatorname{Sym}^{p}\left(I_{Z}/I_{Z}^{2}\right) = \operatorname{Sym}^{p}\hspace{0.1cm}N_{Z/A}^{\vee}$$
and thus we get exact sequences
$$0\rightarrow I_{Z}^{p+1}\rightarrow I_{Z}^{p}\rightarrow \operatorname{Sym}^{p}\hspace{0.1cm}N_{Z/A}^{\vee}\rightarrow 0$$
and thus the claim follows from \cite[Lemma 2.8 (1)]{itobir}.

b) The fact that the sequence is not decreasing follows from the previous part considering $Z = \{0\}$ (with reduced scheme structure). Now, the sequence $\{h^{0}(\mathcal{O}_{A}/I_{0}^{p+1})\}_{p\in\Z_{\geq 0}}$ is unbounded and for every $p\in\Z$ and for every line bundle $M$ we have 
$$M\otimes(\mathcal{O}_{A}/I_{0}^{p+1})\simeq\mathcal{O}_{A}/I_{0}^{p+1}.$$
It follows that, for any fixed $N\in\Z$ there exists $p\gg 0$ such that for every $\alpha\in\hat{A}$ the restriction map 
$$H^{0}(L^{N}\otimes P_{\alpha})\rightarrow H^{0}\left(L^{N}\otimes P_{\alpha}\otimes(\mathcal{O}_{A}/I_{0}^{p+1})\right)$$
is not surjective and thus $I_{0}^{p+1}\left<N\ell\right>$ can not be GV. In other words, for any $N$ there exists $p$ such that $\beta_{p}(\ell)\geq N$ and hence the sequence $\{\beta_{p}(\ell)\}_{p}$ is unbounded. 

\hfill\end{proof}

We also have the following:

\begin{theorem}
\label{elementales}
Let $\ell$ be a polarization on an abelian variety $A$ and $Z\subset A$ a closed subscheme of dimension at most one. Then for $p,r\in\Z$ positive integers with $p<r,$ the following inequalities hold:
$$0\leq\beta_{r}(Z,\ell)\leq\beta_{r-p-1}(Z,\ell)+\beta_{p}(Z,\ell).$$
In particular, the sequence $\{(p+1)^{-1}\beta_{p}(Z,\ell)\}_{p\in\Z_{\geq 1}}$ converges and 
$$\lim_{p\to\infty}\frac{\beta_{p}(Z,\ell)}{p+1} = \inf_{p}\frac{\beta_{p}(Z,\ell)}{p+1}.$$
\end{theorem}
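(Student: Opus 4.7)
The plan is to apply preservation of vanishing (Theorem \ref{preservacion}) to the derived tensor $I_{Z}^{p+1}\hspace{0.1cm}\underline{\otimes}\hspace{0.1cm}I_{Z}^{r-p}$ and to transfer the resulting generic vanishing to $I_{Z}^{r+1}$ via the natural surjection $I_{Z}^{p+1}\otimes I_{Z}^{r-p}\twoheadrightarrow I_{Z}^{r+1}$, exploiting the hypothesis $\dim Z\leq 1$ to kill the relevant higher cohomology of the discrepancy. Non-negativity is immediate, and once the subadditive bound is established, the convergence $(p+1)^{-1}\epsilon_{p}(Z,\ell)\to \inf_{p}(p+1)^{-1}\epsilon_{p}(Z,\ell)$ follows from Fekete's lemma applied to the sequence $u_{k}:=\epsilon_{k-1}(Z,\ell)$, since the bound reads $u_{r+1}\leq u_{p+1}+u_{r-p}$ with $(p+1)+(r-p)=r+1$.

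To obtain the subadditive bound, I would fix rationals $s>\epsilon_{p}(Z,\ell)$ and $t>\epsilon_{r-p-1}(Z,\ell)$, so that both $I_{Z}^{p+1}\left<s\ell\right>$ and $I_{Z}^{r-p}\left<t\ell\right>$ are IT(0); then by Theorem \ref{preservacion} together with Theorem \ref{GVnoIT}(1), $(I_{Z}^{p+1}\hspace{0.1cm}\underline{\otimes}\hspace{0.1cm}I_{Z}^{r-p})\left<(s+t+\eta)\ell\right>$ is IT(0) for every $\eta>0$. I would then consider the morphism $\phi:I_{Z}^{p+1}\hspace{0.1cm}\underline{\otimes}\hspace{0.1cm}I_{Z}^{r-p}\to I_{Z}^{r+1}$ in $D^{b}(A)$ obtained as the canonical truncation to $h^{0}$ composed with the multiplication of ideals, and let $C$ be its cone. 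The long exact sequence of cohomology sheaves produces $h^{0}(C)=0$ (by surjectivity of multiplication), $h^{-1}(C)=\ker(I_{Z}^{p+1}\otimes I_{Z}^{r-p}\to I_{Z}^{r+1})$, and $h^{-j}(C)=\shtor_{j-1}(I_{Z}^{p+1},I_{Z}^{r-p})$ for $j\geq 2$. Since $I_{Z}$ restricts to the structure sheaf on $A\setminus Z$, every nonzero $h^{q}(C)$ is supported on $Z$, hence on a subscheme of dimension $\leq 1$, so $H^{p}(h^{q}(C)\otimes M)=0$ for every $p\geq 2$, every $q\leq -1$ and every coherent twist $M$. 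The spectral sequence $E_{2}^{p,q}=H^{p}(h^{q}(C)\otimes M)\Rightarrow H^{p+q}(C\otimes M)$ thus has nonzero terms only when $q\leq -1$ and $0\leq p\leq 1$, whence $H^{i}(C\otimes M)=0$ for every $i\geq 1$.

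Combining these vanishings in the long exact sequence of hypercohomology of the triangle $I_{Z}^{p+1}\hspace{0.1cm}\underline{\otimes}\hspace{0.1cm}I_{Z}^{r-p}\to I_{Z}^{r+1}\to C$ (pulled back by a multiplication isogeny and tensored by the appropriate line bundle and $P_{\alpha}$) yields $H^{i}(I_{Z}^{r+1}\left<(s+t+\eta)\ell\right>)=0$ for every $i\geq 1$, so $\epsilon_{r}(Z,\ell)\leq s+t+\eta$; letting $s\downarrow \epsilon_{p}(Z,\ell)$, $t\downarrow \epsilon_{r-p-1}(Z,\ell)$ and $\eta\downarrow 0$ completes the proof. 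The main obstacle I anticipate is the derived-category bookkeeping — constructing $\phi$ and identifying the cohomology sheaves of its cone — rather than any deep geometric input. A more naive attempt based on the short exact sequence $0\to I_{Z}^{r+1}\to I_{Z}^{r-p}\to I_{Z}^{r-p}/I_{Z}^{r+1}\to 0$ does not yield the correct threshold, since $\nu_{\ell}(I_{Z}^{r-p}/I_{Z}^{r+1})$ cannot be bounded by $\epsilon_{p}(Z,\ell)+\epsilon_{r-p-1}(Z,\ell)$ in any obvious way without going through the derived tensor.
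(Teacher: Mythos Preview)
Your proposal is correct and follows essentially the same route as the paper: apply Theorem~\ref{preservacion} to $I_Z^{p+1}\,\underline{\otimes}\,I_Z^{r-p}$, use that all $\shtor$-sheaves and the kernel of multiplication are supported on $Z$ (hence have cohomology only in degrees $0,1$), run the hypercohomology spectral sequence, and finish with Fekete's lemma. The only cosmetic difference is packaging: the paper isolates a lemma proving the chain of inclusions $V^{i}(IJ\otimes M)\subseteq V^{i}(I\otimes J\otimes M)\subseteq V^{i}(I\,\underline{\otimes}\,J\otimes M)$ for any two ideals $I,J$ of subschemes of dimension $\le 1$ (so it works directly at the GV level without your auxiliary $\eta$), whereas you bundle both steps into a single cone argument.
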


In order to prove the theorem, the following subadditivity lemma is fundamental:

\begin{lemma}
\label{mi-subaditividad}
Let $I,J$ ideals sheaves of subschemes of dimension at most one. Let $M$ be a line bundle such that $I\hspace{0.1cm}\underline{\otimes}\hspace{0.1cm}J\otimes M$ is GV. Then both $I\otimes J\otimes M$ and $IJ\otimes M$ are GV. 
\end{lemma}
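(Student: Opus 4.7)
The plan is to show that both $I\otimes J\otimes M$ and $IJ\otimes M$ differ from $I\,\underline{\otimes}\,J\otimes M$ only by complexes/sheaves whose cohomology is supported on $Z\cap W$, of dimension at most one, and that such low-dimensional ``errors'' are invisible from the Fourier--Mukai point of view. Starting from the defining sequences $0\to I\to\mathcal{O}_A\to\mathcal{O}_Z\to 0$ and $0\to J\to\mathcal{O}_A\to\mathcal{O}_W\to 0$, a standard double dimension shift yields $\shtor_s(I,J)\simeq\shtor_{s+2}(\mathcal{O}_Z,\mathcal{O}_W)$ for every $s\geq 1$, and a chase in the long exact sequence obtained by tensoring the second sequence with $I$ identifies $\ker(I\otimes J\twoheadrightarrow IJ)$ with $\shtor_1(I,\mathcal{O}_W)\simeq\shtor_2(\mathcal{O}_Z,\mathcal{O}_W)$. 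All these Tor sheaves are supported on $Z\cap W$, hence in dimension at most one; the technical input I then need is that for any coherent sheaf $\mathcal{G}$ supported in dimension $\leq 1$ the Fourier--Mukai cohomology $R^p\Phi_{\mathcal{P}}(\mathcal{G})$ vanishes for $p\geq 2$, which follows from $H^p(\mathcal{G}\otimes P_\alpha)=0$ for $p\geq 2$ (Grothendieck vanishing) combined with cohomology and base change for the proper flat projection $p_{\hat{A}}$.

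For the first assertion I would apply $R\Phi_{\mathcal{P}}$ to the truncation triangle
\[
\tau_{\leq -1}(I\,\underline{\otimes}\,J)\otimes M\to I\,\underline{\otimes}\,J\otimes M\to I\otimes J\otimes M\xrightarrow{+1}.
\]
The hypercohomology spectral sequence $E_2^{p,-s}=R^p\Phi_{\mathcal{P}}(\shtor_s(I,J)\otimes M)$ ($s\geq 1$) is concentrated in $p\in\{0,1\}$, hence $\mathcal{H}^n R\Phi_{\mathcal{P}}(\tau_{\leq -1}(I\,\underline{\otimes}\,J)\otimes M)=0$ for $n\geq 1$. The associated long exact sequence of cohomology sheaves then gives isomorphisms $\mathcal{H}^n R\Phi_{\mathcal{P}}(I\,\underline{\otimes}\,J\otimes M)\xrightarrow{\sim}R^n\Phi_{\mathcal{P}}(I\otimes J\otimes M)$ for every $n\geq 1$, so the codimension bound $\geq n$ coming from the GV hypothesis on $I\,\underline{\otimes}\,J\otimes M$ transfers directly, and $I\otimes J\otimes M$ is GV. For the second assertion, I would feed the short exact sequence
\[
0\to\shtor_2(\mathcal{O}_Z,\mathcal{O}_W)\otimes M\to I\otimes J\otimes M\to IJ\otimes M\to 0
\]
into the Fourier--Mukai long exact sequence: once again $R^p\Phi_{\mathcal{P}}$ of the leftmost term vanishes for $p\geq 2$, producing isomorphisms $R^n\Phi_{\mathcal{P}}(I\otimes J\otimes M)\simeq R^n\Phi_{\mathcal{P}}(IJ\otimes M)$ for $n\geq 2$ and a surjection for $n=1$. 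Since the support of a quotient sheaf is contained in that of the source, the codimension bound proved in the previous step is inherited by $IJ\otimes M$.

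The main obstacle, and the place where the dimension-one hypothesis enters in an essential way, is the vanishing $R^p\Phi_{\mathcal{P}}(\mathcal{G})=0$ for $p\geq 2$ whenever $\mathcal{G}$ is supported in dimension at most one: this is exactly what allows the ``correction terms'' above to contribute only in cohomological degrees zero and one in the relevant long exact sequences, so that GV of the derived tensor faithfully propagates to GV of both $I\otimes J\otimes M$ and $IJ\otimes M$. Without this restriction the higher Tor sheaves (and the kernel of multiplication) could contribute non-trivially in degrees $\geq 2$ and there would be no automatic transfer of the GV condition.
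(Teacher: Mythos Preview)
Your argument is correct and follows essentially the same route as the paper's proof. The only cosmetic difference is that the paper works pointwise with the cohomological support loci $V^{i}(\mathcal{F})=\{\alpha:\,H^{i}(\mathcal{F}\otimes P_{\alpha})\neq 0\}$ and shows the containments $V^{i}(IJ\otimes M)\subseteq V^{i}(I\otimes J\otimes M)\subseteq V^{i}(I\,\underline{\otimes}\,J\otimes M)$ directly via the hypercohomology spectral sequence, whereas you globalize this to the Fourier--Mukai side and compare the sheaves $R^{i}\Phi_{\mathcal{P}}$; since the paper itself records (Section~2.2) that GV is equivalently detected by either invariant, the two formulations are interchangeable. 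Your identification of $\ker(I\otimes J\twoheadrightarrow IJ)$ as $\shtor_{1}(I,\mathcal{O}_{W})\simeq\shtor_{2}(\mathcal{O}_{Z},\mathcal{O}_{W})$ is in fact slightly more precise than the paper's (which writes $\shtor_{1}(I,J)$), but of course this does not affect the argument since all that matters is that the kernel is supported on $Z\cap W$.
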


\begin{myproof}
For an object $\mathcal{F}$ we write $V^{i}(\mathcal{F}) = \{\alpha\in\hat{A} : H^{i}(\mathcal{F}\otimes P_{\alpha})\neq 0\}.$ In this context, the hypothesis means that 
$$\operatorname{codim}_{\hat{A}} V^{i}(I\hspace{0.1cm}\underline{\otimes}\hspace{0.1cm}J\otimes M) \geq i\hspace{0.2cm}\text{for all $i$}.$$
Now, we claim that 
\begin{equation}
\label{contenciones-deseadas}
V^{i}(IJ\otimes M)\subseteq V^{i}(I\otimes J\otimes M)\subseteq V^{i}(I\hspace{0.1cm}\underline{\otimes}\hspace{0.1cm}J\otimes M).
\end{equation}
It is clear that, once the claim is proved, the result follows. Now, to prove the claim, we prove the opposite contentions between the complements, that is, we show that if for $M_{\alpha}:=M\otimes P_{\alpha}$ we have $H^{i}(I\hspace{0.1cm}\underline{\otimes}\hspace{0.1cm}J\otimes M_{\alpha}) = 0$ then $H^{i}(I\otimes J\otimes M_{\alpha})=0$ and if such vanishing holds then $H^{i}(IJ\otimes M_{\alpha}) = 0.$ To do this we use the (fourth-quadrant) spectral sequence (\cite[(3.5)]{Huybrechts}) 
$$E_{2}^{p,q} = H^{p}(\shtor_{-q}(I,J)\otimes M_{\alpha})\implies H^{p+q}(I\hspace{0.1cm}\underline{\otimes}\hspace{0.1cm}J\otimes M_{\alpha}).$$
Tensoring the exact sequence $0\rightarrow I\rightarrow\mathcal{O}_{A}\rightarrow\mathcal{O}_{A}/I\rightarrow 0$ by $J$ we see that the support of the sheaves $\shtor_{i}(I,J)$ for $i>0$ is contained in the intersection of the cosupports of $I$ and $J.$ As such scheme has dimension at most one, it follows that
$$H^{p}(\shtor_{i}(I,J)\otimes M_{\alpha}) = 0\hspace{0.2cm}\text{for $i>0$ and $p\geq 2$}$$
and the spectral sequence already degenerates at the second page. It follows that $E_{2}^{i,0} = H^{i}(I\otimes J\otimes M_{\alpha})$ is a subquotient of $H^{i}(I\hspace{0.1cm}\underline{\otimes}\hspace{0.1cm}J\otimes M_{\alpha})$ and thus
$$V^{i}(I\otimes J\otimes M)\subseteq V^{i}(I\hspace{0.1cm}\underline{\otimes}\hspace{0.1cm}J\otimes M),$$
as we wanted to see. 

Now, to prove that $IJ\otimes M$ is also GV, we note that we have the exact sequence
$$0\rightarrow\shtor_{1}(I,J)\otimes M_{\alpha}\rightarrow I\otimes J\otimes M_{\alpha}\rightarrow IJ\otimes M_{\alpha}\rightarrow 0,$$
and thus taking cohomology we get an exact sequence
$$H^{i}(I\otimes J\otimes M_{\alpha})\rightarrow H^{i}(IJ\otimes M_{\alpha})\rightarrow H^{i+1}(\shtor_{1}(I,J)\otimes M_{\alpha}).$$
As $\dim\operatorname{Supp}\shtor_{1}(I,J)\leq 1$ the $H^{i+1}$ at the right is zero for $i\geq 1$ and hence we get the inclusion
$$V^{i}(IJ\otimes M)\subseteq V^{i}(I\otimes J\otimes M)$$
that we wanted to prove. 

\end{myproof}

\begin{myprooft}

Let $s,t\in\Q$ such that both $I_{Z}^{r-k}\left<t\ell\right>$ and $I_{Z}^{k+1}\left<s\ell\right>$ are GV. By \ref{preservacion} we have that $(I_{Z}^{r-k}\hspace{0.1cm}\underline{\otimes}\hspace{0.1cm}I_{Z}^{k+1})\left<(s+t)\ell\right>$ is GV. If we write $s=a/b$ and $t=c/e,$ the latter condition means that $\lambda_{be}^{\ast}(I_{Z}^{r-k}\hspace{0.1cm}\underline{\otimes}\hspace{0.1cm}I_{Z}^{k+1})\otimes L^{be(ae+bc)}$ is GV. Now, as pullbacks commutes with derived tensor products and $\lambda_{be}$ is a flat morphism (so $\lambda_{be}^{\ast}I_{Z}^{k+1}$ and $\lambda_{be}^{\ast}I_{Z}^{r-k}$ are also ideals), Lemma \ref{mi-subaditividad} above implies then that $I_{Z}^{r+1}\left<(s+t)\ell\right>$ is GV, as we wanted to see. 

Finally, from Fekete's subadditivity lemma (see, \cite[Lemma 1.4]{mustata}, for example), it follows that the sequence $\{(k+1)^{-1}\beta_{k}(Z,\ell)\}_{k}$ converges and 
$$\lim_{k\to\infty} \frac{\beta_{k}(Z,\ell)}{k+1} = \inf_{k}\frac{\beta_{k}(Z,\ell)}{k+1}.$$

\end{myprooft}

An immediate consequence is the following:

\begin{corollary}
\label{desigualdad-jets-obvia}
For every $p\in\Z_{\geq 0}$ the following inequalites of jets-separation thresholds hold:
$$\beta_{p}(\ell)\leq (p+1)\beta_{0}(\ell)\leq p+1$$
and if $\beta_{p}(\ell) = p+1$ then $\beta_{r}(\ell) = r+1$ for every $p\leq r.$ 
\end{corollary}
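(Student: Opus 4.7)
The first inequality chain follows by iterating the subadditivity of Theorem \ref{elementales} with $Z=\{0\}$. Taking $r=k$ and $p=0$ in that theorem gives $\epsilon_{k}(\ell)\le \epsilon_{k-1}(\ell)+\epsilon_{0}(\ell)$, and induction on $k$ yields $\epsilon_{p}(\ell)\le (p+1)\epsilon_{0}(\ell)$; the bound $\epsilon_{0}(\ell)\le 1$ is Example \ref{bpf}, which identifies $\epsilon_{0}(\ell)$ with the basepoint-freeness threshold.

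Assume now $\epsilon_{p}(\ell)=p+1$. Because $\epsilon_{p}(\ell)\le (p+1)\epsilon_{0}(\ell)\le p+1$, equality in the outer members forces equality throughout, so $\epsilon_{0}(\ell)=1$. Applying the same chain at level $r\ge p$ gives the upper bound $\epsilon_{r}(\ell)\le (r+1)\epsilon_{0}(\ell)=r+1$, so only the matching lower bound $\epsilon_{r}(\ell)\ge r+1$ remains.

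For that lower bound I would induct on $r-p\ge 0$, the base $r=p$ being the hypothesis. Given $\epsilon_{r-1}(\ell)=r$, subadditivity already yields $\epsilon_{r}(\ell)\le \epsilon_{r-1}(\ell)+\epsilon_{0}(\ell)=r+1$, so the task is to exclude strict inequality. The plan is to argue by contradiction using the Fekete-type convergence $\lim_{n}\epsilon_{n}(\ell)/(n+1)=\inf_{n}\epsilon_{n}(\ell)/(n+1)$ from Theorem \ref{elementales}: were $\epsilon_{r}(\ell)<r+1$ for some $r>p$, iterating $\epsilon_{k(r+1)-1}(\ell)\le k\,\epsilon_{r}(\ell)$ would force the ratios $\epsilon_{n}(\ell)/(n+1)$ along the arithmetic subsequence $n=k(r+1)-1$ to lie strictly below $1$, so $\inf_{n}\epsilon_{n}(\ell)/(n+1)<1$; one would then combine this with the iterated bounds $\epsilon_{(k+1)(p+1)-1}(\ell)\le (k+1)\epsilon_{p}(\ell)=(k+1)(p+1)$ and the extremality $\epsilon_{p}(\ell)/(p+1)=1$ to derive a contradiction.

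The hardest step is precisely this last extraction of a contradiction. Theorem \ref{elementales} and Example \ref{bpf} together yield only upper bounds on $\epsilon_{r}(\ell)$, whereas we need a matching lower bound propagated from the single extremal datum $\epsilon_{p}(\ell)=p+1$; since subadditivity transmits information most naturally to smaller indices, the upward rigidity required here will hinge on a careful analysis of the equality cases in the subadditivity relations $\epsilon_{a+b+1}(\ell)\le \epsilon_{a}(\ell)+\epsilon_{b}(\ell)$, rather than on subadditivity alone, and I expect this to be the crucial ingredient of the proof.
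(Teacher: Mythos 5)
Your proof of the first chain is correct and is exactly the intended argument: iterate the subadditivity of Theorem \ref{elementales} with $Z=\{0\}$ and $p=0$ to get $\epsilon_{p}(\ell)\leq(p+1)\epsilon_{0}(\ell)$, and use Example \ref{bpf} for $\epsilon_{0}(\ell)\leq 1$.

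The second half is where the genuine gap lies, and your closing paragraph already senses it: subadditivity only transmits \emph{upper} bounds to larger indices, and your proposed contradiction does not close. From $\epsilon_{r}(\ell)<r+1$ for some $r>p$ you do get $\inf_{n}\epsilon_{n}(\ell)/(n+1)<1$, but Fekete's lemma only says the ratios converge to this infimum; it does not force the single term $\epsilon_{p}(\ell)/(p+1)$ down to it, so there is no conflict with $\epsilon_{p}(\ell)=p+1$. In fact no argument can repair this, because the upward implication as printed is false, and the paper itself supplies a counterexample: for an indecomposable principally polarized abelian surface one has $\epsilon_{1}(\theta)=2$ (Example \ref{principal}), while $\varepsilon(\theta)=4/3$ combined with Proposition \ref{desigualdad} gives $\epsilon_{2}(\theta)<3$ (this is spelled out in Section 7); likewise $\epsilon_{0}(\theta)=1$ there, yet $\epsilon_{2}(\theta)<3$. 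So the condition ``$p\leq r$'' in the statement must be read as a slip for ``$r\leq p$'': the extremal equality propagates \emph{downward}, not upward. With that reading the corollary really is the advertised immediate consequence, obtained by forcing equality in the chain you already proved: for $r<p$,
$$p+1=\epsilon_{p}(\ell)\leq\epsilon_{p-r-1}(\ell)+\epsilon_{r}(\ell)\leq(p-r)\epsilon_{0}(\ell)+(r+1)\epsilon_{0}(\ell)\leq p+1,$$
whence all inequalities are equalities, $\epsilon_{0}(\ell)=1$ and $\epsilon_{r}(\ell)=r+1$. Your instinct that the equality cases of subadditivity are the crucial ingredient was correct; the point you missed is that they rigidify only in the downward direction, which is why the statement has to be read that way.
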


\begin{comentario}
\label{Bauer}
Note that using the above corollary we see that if $\beta_{0}(\ell)<(p+1)^{-1}$ then $\beta_{p}(\ell)<1$ and thus $\ell$ separates $p$-jets at every point. In particular we recover the well known fact (\cite[Theorem 1]{Bauer},\cite[Theorem (4)]{RegAVIII}) that for every $p\in\Z_{\geq 0}$ we have that $(p+2)\ell$ separates $p$-jets at every point. 
\end{comentario}

\section{Seshadri constants}

In Section 3 above we proved that for a closed subscheme $Z$ of dimension at most one, the sequence $\{(p+1)^{-1}\beta_{p}(Z,\ell)\}_{p}$ converges. In this section we establish that the limit is actually the inverse of the Seshadri constant of the ideal $I_{Z}$ with respect to $\ell$.  

First, recall that for an ample line bundle $L$ and an ideal sheaf $I,$ the Seshadri constant of $I$ with respect to $L$ is the real number
$$\varepsilon(I,L) = \sup\left\{t\in\Q : \sigma^{\ast}L - tE\hspace{0.1cm}\text{is nef}\right\},$$
where $\sigma:\mathrm{Bl}_{I}A\rightarrow A$ is the blow-up along $I$ with exceptional divisor $E.$

\begin{theorem}
\label{seshadrilimite}
Let $L$ be an ample line bundle on an a $k$-abelian variety $A$ and $\ell$ the corresponding polarization. Let $Z$ be a closed subscheme of $A.$ Then:    
$$\sup \frac{p+1}{\beta_{p}(Z,\ell)}\geq \varepsilon(I_{Z},L)\geq \lim\sup\frac{p+1}{\beta_{p}(Z,\ell)}.$$
If $\dim Z\leq 1$ then 
$$\varepsilon(I_{Z},L) = \lim_{p\to\infty}\frac{p+1}{\beta_{p}(Z,\ell)}=\sup_{p}\frac{p+1}{\beta_{p}(Z,\ell)}.$$
In particular, in this case we have that $\beta_{p}(Z,\ell) \geq (p+1)\varepsilon(I_{Z},L)^{-1}$ for every $p\in\Z_{\geq 0}.$
\end{theorem}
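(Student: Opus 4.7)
The plan is to prove the two inequalities in part (1) separately, and then combine with Theorem \ref{elementales} from Section 3 to conclude part (2).

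For the inequality $\varepsilon(I_Z,L)\ge \limsup_p \frac{p+1}{\epsilon_p(Z,\ell)}$, the strategy is to extract a numerical Seshadri bound from each IT(0) witness. Take rational $t=a/b>\epsilon_p(Z,\ell)$, so that $\mathcal F:=\lambda_b^{\ast}I_Z^{p+1}\otimes L^{ab}$ is IT(0) on $A$; since it is nonzero, its Fourier--Mukai transform is a nontrivial coherent sheaf on $\hat{A}$, and so $H^0(A,\mathcal F\otimes P_\alpha)\neq 0$ for generic $\alpha$. Any such section provides an effective divisor $D\equiv L^{ab}$ with $D\in\lambda_b^{\ast}I_Z^{p+1}$, i.e.\ vanishing along $\lambda_b^{-1}(Z)$ in the $(p+1)$-th ideal-theoretic sense. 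For an irreducible curve $C_0\subset A$ meeting $Z$, I would lift to a component $\widetilde C$ of $\lambda_b^{-1}(C_0)$ of degree $d$ over $C_0$ and intersect $D$ with $\widetilde C$: using the projection formula together with $\lambda_b^{\ast}L\equiv L^{b^2}$ and the \'etaleness of $\lambda_b$ (which preserves local multiplicities along the fibers over $Z$), the factor $d$ cancels, yielding $L\cdot C_0\ge \frac{p+1}{t}\cdot\mathrm{mult}_{I_Z}(C_0)$. Taking the infimum over curves through $Z$ gives $\varepsilon(I_Z,L)\ge (p+1)/t$, and letting $t\downarrow \epsilon_p(Z,\ell)$ followed by $\limsup$ over $p$ yields the claim.

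For the reverse inequality $\sup_p \frac{p+1}{\epsilon_p(Z,\ell)}\ge \varepsilon(I_Z,L)$, I would use the blow-up definition. Given rational $c<\varepsilon(I_Z,L)$, the class $\sigma^{\ast}L-cE$ is ample on $X:=\mathrm{Bl}_{I_Z}A$. A Fujita-style vanishing argument on $X$ yields $H^i(X,\mathcal O_X(k(\sigma^{\ast}L-cE))\otimes\sigma^{\ast}P_\alpha)=0$ for all $i>0$, all $\alpha\in\hat{A}$, and all $k\gg 0$. Pushing forward by $\sigma$, using $\sigma_\ast\mathcal O_X(-nE)=\overline{I_Z^n}$ and the vanishing of higher direct images of $\mathcal O_X(-nE)$, yields IT(0) for $\overline{I_Z^{kc}}\otimes L^k$. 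To transfer this to the ordinary power $I_Z^m$ (which enters $\epsilon_p(Z,\ell)$), I invoke the Brian\c con--Skoda inclusion $\overline{I_Z^n}\subseteq I_Z^{n-g+1}$ valid in characteristic zero, together with a cohomological comparison, to extract IT(0) for $I_Z^{kc-g+1}\otimes L^k$ (possibly after a small further twist to absorb the error). This gives $\epsilon_{kc-g}(Z,\ell)\le k$, hence $(kc-g+1)/\epsilon_{kc-g}(Z,\ell)\to c$ as $k\to\infty$. Since $c$ was arbitrary, the $\sup$-inequality follows. Finally, for $\dim Z\le 1$, Theorem \ref{elementales} shows $(p+1)^{-1}\epsilon_p(Z,\ell)$ converges to its infimum, forcing $\lim=\sup$ for $(p+1)/\epsilon_p(Z,\ell)$; combined with the two inequalities, this gives the equality asserted in part (2).

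The main technical obstacle is the second direction above: the passage from IT(0) of $\overline{I_Z^n}\otimes L^k$ (which arises naturally from the pushforward of line bundles on the blow-up) to IT(0) of the ordinary power $I_Z^n\otimes L^k$ (which is what the definition of $\epsilon_p(Z,\ell)$ requires), since IT(0) does not automatically descend to subsheaves. The Brian\c con--Skoda inclusion provides the natural bridge but costs $g-1$ in the exponent; reconciling this with the asymptotic statement requires verifying that the resulting bound still converges to $\varepsilon(I_Z,L)$ as $k\to\infty$, which is possible because $g-1$ is fixed while $kc\to\infty$.
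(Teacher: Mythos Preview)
Your approach is genuinely different from the paper's in both directions, and it is worth contrasting them.

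For the inequality $\sup_p (p+1)/\epsilon_p(Z,\ell)\ge\varepsilon(I_Z,L)$, the paper also passes to the blow-up, but instead of Fujita vanishing plus Brian\c con--Skoda it invokes a comparison lemma of Ein--Lazarsfeld et al.\ (\cite[Lemma 3.3]{ELC}) stating that $H^j(I_Z^{r}\otimes M)\simeq H^j(\sigma^\ast M\otimes\mathcal O_{X'}(-rE))$ for all $r\ge r_0$ and all line bundles $M$, and then applies Serre vanishing for the ample class $b\sigma^\ast L-aE$ to finitely many twists $\sigma^\ast P_\alpha$. The point you identify as the main obstacle, namely the passage from $\overline{I_Z^{\,n}}$ to $I_Z^n$, is in fact a non-issue: since $\mathrm{Bl}_{I_Z}A=\mathrm{Proj}\bigoplus_{r\ge 0} I_Z^r$ and $\mathcal O(-E)$ is the tautological $\mathcal O(1)$, Serre's theorem for Proj gives $\sigma_\ast\mathcal O_{X'}(-rE)=I_Z^{r}$ (the ordinary power, not the integral closure) for all $r\gg 0$, and the higher direct images vanish. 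So your Brian\c con--Skoda detour is unnecessary; correcting the pushforward formula collapses your argument to essentially the paper's. Your use of Fujita vanishing is, however, a clean way to obtain the vanishing uniformly in $\alpha\in\hat A$.

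For the inequality $\varepsilon(I_Z,L)\ge\limsup_p (p+1)/\epsilon_p(Z,\ell)$, the paper takes a completely different, more algebraic route via Castelnuovo--Mumford regularity: it chains the inequalities
\[
\frac{k+1}{\varepsilon(I_Z,L)}\ \le\ d_L(I_Z^{k+1})\ \le\ \mathrm{reg}_L(I_Z^{k+1})\ \le\ 2+g+\epsilon_k(Z,\ell),
\]
using \cite[Proposition 5.4.5]{PositivityI} and Mumford's theorem, and then lets $k\to\infty$. Your curve-intersection argument is more geometric and in principle yields the sharper bound $\varepsilon(I_Z,L)\ge (p+1)/\epsilon_p(Z,\ell)$ for every $p$; but it requires, for each curve $C_0$, producing a section of $\lambda_b^\ast I_Z^{p+1}\otimes L^{ab}\otimes P_\alpha$ whose divisor does \emph{not} contain the chosen component $\widetilde C$. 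You assert this via ``genericity of $\alpha$'' but do not justify it; some argument (e.g.\ comparing Fourier--Mukai transforms of $\mathcal F$ and $\mathcal F\otimes I_{\widetilde C}$, or a continuous-global-generation statement) is needed here. The regularity approach in the paper sidesteps this issue entirely.

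Both routes lead to the same conclusion once combined with Theorem~\ref{elementales} for the case $\dim Z\le 1$, as you correctly note.
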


\begin{proof}

The proof of this statement closely follows \cite[Theorem 3.2]{ELC}. First we prove that $\varepsilon(I_{Z},L)\hspace{0.1cm}\leq\hspace{0.1cm}\sup_{p}(p+1)\beta_{p}(Z,\ell)^{-1}.$ In order to do this we consider a positive rational number $t = a/b <\varepsilon(I_{Z},L)$ with $\operatorname{char} k\nmid a$ (note that this kind of numbers form a dense subset of $(0,\varepsilon(I_{Z},L)$) and we need to show that there exists $k$ (possibly very big) such that $\beta_{k}(Z,\ell)\leq t^{-1}(k+1),$ for which is enough to find $k\gg 0$ such that 
\begin{equation}
\label{vanishingdeseado}
H^{j}(\lambda_{a}^{\ast}I_{Z}^{k+1}\otimes L^{ab(k+1)}) = 0 \hspace{0.2cm}\text{for $j>0$}.
\end{equation}
To do this, we start by noticing that by \cite[Lemma 3.3]{ELC} there exists $r_{0}$ such that
\begin{equation}
\label{lema-laz} 
H^{j}(I_{Z}^{r}\otimes M)\simeq H^{j}(\sigma^{\ast}M\otimes\mathcal{O}_{X^{\prime}}(-rE))
\end{equation}
for all $j\geq 0,$ all $r\geq r_{0}$ and all line bundles $M,$ where $X^{\prime}=\mathrm{Bl}_{Z}X$ and $\sigma:X^{\prime}\to X$ is the corresponding projection. Now, let $u$ be an integer such that $au\geq r_{0}.$ For all such $u$ consider $k_{u}= au-1$ and thus we have 
$$H^{j}(\lambda_{a}^{\ast}I_{Z}^{k_{u}+1}\otimes L^{ab(k_{u}+1)}) = H^{j}(\lambda_{a}^{\ast}I_{Z}^{au}\otimes L^{a^{2}bu})\simeq\bigoplus_{\alpha\in\hat{A}[a]} H^{j}(I_{Z}^{au}\otimes L^{bu}\otimes P_{\alpha}),$$
where in the last step we used the fact that $\operatorname{char} k\nmid a$ to get the direct sum (\cite[p.72]{Mumford}). So, in order to get the vanishing \eqref{vanishingdeseado}, we need to ensure the vanishing of each direct summand.
Now, by \eqref{lema-laz} and the way we choose $u$ we have that
\begin{align*}
H^{j}(I_{Z}^{au}\otimes L^{bu}\otimes P_{\alpha}) & \simeq H^{j}(\sigma^{\ast}(L^{bu}\otimes P_{\alpha})\otimes\mathcal{O}_{X^{\prime}}(-auE)) \\
& \simeq H^{j}((\sigma^{\ast}P_{\alpha})\otimes\mathcal{O}_{X^\prime}(b\sigma^{\ast}L-aE)^{\otimes u}).     
\end{align*}
Now, as $\varepsilon(I_{Z},L)>a/b$ we have that $b\sigma^{\ast}L-aE$ is ample and thus, as we just need to consider finite $\alpha$'s, by Serre's vanishing we can take $u\gg 0$ such that the desired vanishings \eqref{vanishingdeseado} hold. Summarizing, we have established that $I_{Z}^{k_{u}+1}\left<\frac{b(k_{u}+1)}{a}\ell\right>$ is IT(0) for $u\gg 0$ and thus 
$$\sup_{k} \frac{k+1}{\beta_{k}(Z,\ell)} \geq \frac{k_{u}+1}{\beta_{k_u}(Z,\ell)}\geq\frac{a}{b} = t.$$
As $t$ can be arbitrarily close to $\varepsilon(I_{Z},L)$ we conclude that 
$$\sup_{k} \frac{k+1}{\beta_{k}(Z,\ell)}\geq\varepsilon(I_{Z},L).$$
To prove the opposite inequality, we first note that by \cite[Proposition 5.4.5]{PositivityI} we have that 
$$(k+1)\varepsilon(I_{Z},L)^{-1} \leq d_{L}(I_{Z}^{k+1}) := \min\{d\in\Z_{\geq 0} : I_{Z}^{k+1}\otimes L^{\otimes d}\hspace{0.1cm}\text{is globally generated}\},$$
while Mumford's theorem (\cite[Theorem 1.8.5]{PositivityI}) says that 
$$d_{L}(I_{Z}^{k+1})\leq\mathrm{reg}_{L}(I_{Z}^{k+1}):=\min\{m\in\Z_{\geq 0} : I_{Z}^{k+1}\hspace{0.1cm}\text{is $m$-regular with respect to $L$}\},$$
where being $m$-regular with respect to $L$ means that $H^{j}(I_{Z}^{k+1}\otimes (m-j)L) = 0$ for $j>0$. Now, by definition of $\beta_{k}(Z,L)$ and Theorem \ref{GVnoIT}, we have that $I_{Z}^{k+1}\left<t\ell\right>$ is IT(0) for every $t>\beta_{k}(Z,L)$. In particular, for any integer $r>\beta_{k}(Z,L)$ we have the following vanishings:  
$$H^{j}(I_{Z}^{k+1}\otimes L^{\otimes r}) = 0\hspace{0.1cm}\text{for all $j>0$}.$$
For $j> g$ we automatically have $H^{j}(I_{Z}^{k+1}\otimes L^{\otimes r}) = 0$  and hence $I_{Z}^{k+1}$ is $m$-regular with respect to $L$ as soon as 
$$ m - j > \beta_{k}(Z,L) \hspace{0.4cm}\text{for every $j\in\{1,...,g\}$},$$
for which it is enough to have $m > g+\beta_{k}(Z,L).$ In particular, we obtain that  
$$\mathrm{reg}_{L}(I_{Z}^{k+1})\leq 1+g + \lceil\beta_{k}(Z,L)\rceil\leq 2+g+\beta_{k}(Z,L).$$
It follows that 
$$\varepsilon(I_{Z},L)^{-1}\leq\frac{2+g}{k+1}+\frac{\beta_{k}(Z,L)}{k+1}\hspace{0.3cm}\text{for all $k$}.$$
Passing to the limit we get
$$\varepsilon(I_{Z},L)^{-1}\leq\lim\inf\frac{\beta_{k}(Z,L)}{k+1} = \frac{1}{\lim\sup\frac{k+1}{\beta_{k}(Z,L)}}$$ 
and therefore 
$$\varepsilon(I_{Z},L) \geq \lim\sup\frac{k+1}{\beta_{k}(Z,L)},$$
as we wanted to see.

If $\dim Z\leq 1,$ Theorem \ref{elementales} above implies that we have equalities.
 
\hfill\end{proof}

\section{Gauss-Wahl maps}

In this section we recall the definition of the Gauss-Wahl maps and in particular we introduce some thresholds that may be thought as the surjectivity thresholds of such maps. Then we establish Theorem \ref{B} from the introduction, which relates these thresholds with the jets-separation thresholds.

To start, consider a smooth projective variety $X.$ We write $p_{1},p_{2}:X\times X\rightarrow X$ for the projections, $\Delta\subset X\times X$ for the diagonal and $I_{\Delta}\subset\mathcal{O}_{X\times X}$ for the corresponding ideal sheaf. For $n\in\Z_{\geq 0}$ we write $n\Delta$ for the closed subscheme of $X\times X$ defined by the ideal $I_{\Delta}^{n},$ (we consider $I_{\Delta}^{0} = \mathcal{O}_{X\times X}$) thus
$$\mathcal{O}_{n\Delta} = \mathcal{O}_{X\times X}/I_{\Delta}^{n}.$$
\begin{definicion}
Let $L$ be a line bundle on $X.$ Given $n\in\Z_{\geq -1},$ the sheaf of $n$-principal parts of $L$ over $X$ (or the sheaf of $n$-jets of $L$ over $X$) is the sheaf
$$P^{n}(L) := p_{1\ast}\left(p_{2}^{\ast}L\otimes\mathcal{O}_{(n+1)\Delta}\right).$$
We also consider the sheaf of $n$-jets-relations
$$R_{L}^{(n)} = p_{1\ast}\left(p_{2}^{\ast}L\otimes I_{\Delta}^{n+1}\right).$$
\end{definicion}

From the definition it follows that $P^{-1}(L) = 0$ and $P^{0}(L) = L.$ Now, applying $p_{1\ast}(p_{2}^{\ast}L\otimes -)$ to the exact sequence $0\rightarrow I_{\Delta}^{n+1}\rightarrow \mathcal{O}_{X\times X}\rightarrow\mathcal{O}_{(n+1)\Delta}\rightarrow 0$ we get an exact sequence
\begin{equation}
\label{defRL}
\xymatrix{ 0\ar[r] &  R_{L}^{(n)}\ar[r] &  H^{0}(L)\otimes\mathcal{O}_{X}\ar[r]^-{\mathrm{ev}} & P^{n}(L),}
\end{equation}
where the fiber over $x$ of the last arrow is given by the restriction
$$H^{0}(L)\rightarrow H^{0}\left(L\otimes\mathcal{O}_{X}/I_{x}^{n+1}\right).$$
In particular, the right arrow is surjective if and only if $L$ separates $n$-jets at every point of $X.$  In this way we also see that $R_{L}^{(-1)} = H^{0}(L)\otimes\mathcal{O}_{X}$ and $R_{L}^{(0)}$ is the kernel of the evaluation map (which in \cite{cohrank} is denoted by $M_{L}$).
On the other hand, applying $p_{1\ast}(p_{2}^{\ast}L\otimes -)$ to the exact sequence
$$0\rightarrow I_{\Delta}^{n+1}\rightarrow I_{\Delta}^{n}\rightarrow I_{\Delta}^{n}/I_{\Delta}^{n+1}\rightarrow 0$$
we get an exact sequence
\begin{equation}
\label{defgauss}
\xymatrix{0\ar[r] & R_{L}^{(n)}\ar[r] & R_{L}^{(n-1)}\ar[r]^-{u} & L\otimes\operatorname{Sym}^{n}\Omega_{X},}
\end{equation}
fitting in the following commutative and exact diagram:
\begin{equation}
\label{diagramote}
\xymatrix{ & 0\ar[d] & 0\ar[d] & & \\
                & R_{L}^{(n)}\ar[d]\ar@{=}[r] & R_{L}^{(n)}\ar[d] & & \\
 0\ar[r] & R_{L}^{(n-1)} \ar[r]\ar[d]_{u} & H^{0}(L)\otimes\mathcal{O}_{X}\ar[r]^{\mathrm{ev}_{n-1}}\ar[d]_{\mathrm{ev}_n} & P^{n-1}(L)\ar@{=}[d] &  \\ 
0\ar[r] & L\otimes\operatorname{Sym}^{n}\Omega_{X}\ar[r] & P^{n}(L)\ar[r] & P^{n-1}(L)\ar[r] & 0 }             
\end{equation}
where the surjectivity at the bottom follows from the fact that the sheaf $I_{\Delta}^{n}/I_{\Delta}^{n+1}$ is supported on the diagonal (for example, see \cite[Proposition 1.3]{Ricolfi} for details). In particular, from the 5-lemma it follows that $u$ is surjective as soon as the evaluation map $\mathrm{ev}_{n}$ is surjective, that is, as soon as $L$ separates $n$-jets at every point of $X.$

\begin{definicion}
Given $L,M$ line bundles on a smooth projective variety $X$ and $n\in\Z_{\geq 0}.$
\begin{enumerate}[label = \alph*)]
\item\hspace{0.1cm} The $n$-th Gauss-Wahl map associated to $L,M$ is the map 
$$\gamma_{L,M}^{n} = H^{0}(u\otimes M) : H^{0}(R_{L}^{(n-1)}\otimes M)\rightarrow H^{0}(L\otimes M\otimes\operatorname{Sym}^{n}\Omega_{X}),$$
where $u$ is the morphism in \eqref{defgauss}.
\item\hspace{0.1cm} The $n$-th multiplication map associated to $L,M$ is the map
$$m_{L,M}^{n} = H^{0}(\mathrm{ev}_{n}\otimes M): H^{0}(L)\otimes H^{0}(M)\rightarrow H^{0}(P^{n}(L)\otimes M).$$
\end{enumerate}
\end{definicion}

From the diagram \eqref{diagramote} we see that the surjectivity of $m_{L,M}^{n}$ implies the surjectivity of $\gamma_{L,M}^{n}.$ Moreover, from \eqref{defgauss} and the remark below it, we see that if $L$ generate $n$-jets at every point then the surjectivity of $m_{L,M}^{n}$ is equivalent to the vanishing of $H^{1}(R_{L}^{(n)}\otimes M)$ and hence such vanishing implies the surjectivity of $\gamma_{L,M}^{n}.$ In the case that $M = L^{k}$ and $X$ is an abelian variety, this suggests to study the cohomological rank function $h^{1}(R_{L}^{(n)}\left<-\hspace{0.1cm}\ell\right>),$ where $\ell$ is the class of $L$ in $\NS(X).$ 

\begin{lemma}
\label{GV-RP}
Let $L$ be an ample line bundle on an abelian variety of dimension $g$. Then: 
\begin{enumerate}[label = \alph*)]
\item\hspace{0.1cm} For $t>-1$ we have that $P^{n}(L)\left<t\ell\right>$ is IT(0) and $h^{0}(P^{n}(L)\left<t\ell\right>) = (1+t)^{g}h^{0}(L){n+g\choose g}$
\item\hspace{0.1cm} If $L$ separates $n$-jets at every point then we have $h^{j}(R_{L}^{(n)}\left<s\ell\right>) = 0$ for every $s>0$ and $j\geq 2$ while
$$h^{1}(R_{L}^{(n)}\left<s\ell\right>) \geq (1+s)^{g}h^{0}(L){n+g\choose g}$$ 
for $s\in(-1,0),$ with equality if $g\geq 2.$
\end{enumerate}
\end{lemma}

\begin{proof}
a)  Write $t=a/b$ with $a>-b$ and $b>0.$ Without loss of generality we may assume that $L$ is symmetric and hence $\lambda_{b}^{\ast}L\simeq L^{b^2}.$ Applying $\lambda_{b}^{\ast}( - )\otimes L^{ab}\otimes P_{\alpha}$ to the exact sequences
\begin{equation}
\label{suc-pes}
0\rightarrow L\otimes\operatorname{Sym}^{n}\Omega_{A}\rightarrow P^{n}(L)\rightarrow P^{n-1}(L)\rightarrow 0
\end{equation}
we get that for every $j\geq 1$ we have
$$H^{j}(\lambda_{b}^{\ast}P^{n}(L)\otimes L^{ab}\otimes P_{\alpha})\simeq H^{j}(\lambda_{b}^{\ast}P^{n-1}(L)\otimes L^{ab}\otimes P_{\alpha})$$
and thus, inductively, we see that 
$$H^{j}(\lambda_{b}^{\ast}P^{n}(L)\otimes L^{ab}\otimes P_{\alpha})\simeq H^{j}(\lambda_{b}^{\ast}P^{0}(L)\otimes L^{ab}\otimes P_{\alpha}) = H^{j}(L^{b(a+b)}\otimes P_{\alpha}) = 0,$$
as we wanted to prove. It also follows, inductively, that 
\begin{equation}
\label{ecuacion}
h^{0}(\lambda_{b}^{\ast}P^{n}(L)\otimes L^{ab}\otimes P_{\alpha})  = h^{0}(L^{b(a+b)}\otimes P_{\alpha})\sum_{k=0}^{n}{g+k-1\choose g-1} = b^{g}(b+a)^{g}h^{0}(L){n+g\choose g}.
\end{equation}

b) Now, write $s = c/d$ with $d$ a positive integer and $c>-d$. As $L$ separates $n$-jets at every point, the sequence in \eqref{defRL} is also right exact. As $H^{i}(L^{cd}\otimes P_{\alpha}) = 0$ for $0<i<g,$ applying $\lambda_{d}^{\ast}(-)\otimes L^{cd}\otimes P_{\alpha}$ and taking cohomology we obtain then the following isomorphisms
$$H^{j}(\lambda_{d}^{\ast}R_{L}^{(n)}\otimes L^{cd}\otimes P_{\alpha})\simeq H^{j-1}(\lambda_{d}^{\ast}P^{n}(L)\otimes L^{cd}\otimes P_{\alpha}) =0 \hspace{0.2cm}\text{for every $2\leq j\leq g-1$},$$
where the last equality comes from part a). Moreover, if $c>0$ then $H^{g}(L^{cd}\otimes P_{\alpha}) = 0$ and hence the above isomorphism also holds for $j=g,$ as soon as $g\geq 2.$ Now, if $c<0$ then $H^{0}(L^{cd}\otimes P_{\alpha}) = 0$ and therefore we have the following exact sequence:
\begin{equation}
\label{casonegativo}
0\rightarrow H^{0}(\lambda_{d}^{\ast}P^{n}(L)\otimes L^{cd}\otimes P_{\alpha})\rightarrow H^{1}(\lambda_{d}^{\ast}R_{L}^{(n)}\otimes L^{cd}\otimes P_{\alpha})\rightarrow H^{0}(L)\otimes H^{1}(L^{cd}\otimes P_{\alpha}),
\end{equation} 
where the group at the right is zero if $g\geq 2.$ The result then follows from part a).

\hfill\end{proof}

In the spirit of \cite[Section 8]{cohrank} we introduce the following numbers:

\begin{definicion}
\label{def-threshold}
Let $L$ be an ample line bundle on an abelian variety $A.$ For $p\in\Z_{\geq 0}$ we write
$$\mu_{p}(L) = \inf\left\{t\in(-1,\infty)\cap\Q : h^{1}(R_{L}^{(p)}\left<t\ell\right>) =0\right\},$$
where $\ell$ is the class of $L$ in $\NS(A).$
\end{definicion}

Note that, when $L$ separates $p$-jets at every point, the above lemma tells us that $\mu_{p}(L)$ is the vanishing threshold  $\nu_{\ell}(R_{L}^{(p)})$ (see Definition \ref{varios-thresholds}) and, in this context, $\mu_{p}(L)\geq 0.$ Note also that the invariant $\mu_{0}(L)$ is already considered in \cite{cohrank}, where it is denoted by $s(L).$ In the cited reference it is shown that there is a relation between such number and the base-point freeness threshold, when $L$ is globally generated. In the following we compute $\Phi_{\mathcal{P}}(I_{0}^{p+1}\otimes L)$ to show that there is an analagous relation between $\mu_{p}$ and $\beta_{p}.$

Before stating our result, we need to introduce further notation.
Let $L$ be an ample line bundle on an abelian variety $A$ and $\alpha\in\hat{A}.$ For $a,b\in\Z_{>0}$ and $n\in\Z_{\geq 0}$ we write 
$$m_{L,\alpha}^{n}(a,b) = H^{0}(\lambda_{b}^{\ast}\mathrm{ev}_{n}\otimes L^{\otimes ab}\otimes P_{\alpha}): H^{0}(L)\otimes H^{0}(L^{ab}\otimes P_{\alpha})\rightarrow H^{0}(\lambda_{b}^{\ast}P^{n}(L)\otimes L^{\otimes ab}\otimes P_{\alpha}),$$
and for $t = a/b\in\Q$ we write
$$\operatorname{gcorank} m_{L}^{n}(t) = \frac{1}{b^{2g}}\cdot \min_{\alpha\in\hat{A}}\operatorname{corank} m_{L,\alpha}^{n}(a,b),$$
and it is easy to see that this number does not depend on the particular representation $t=a/b.$ As observed before, when $L$ separates $p$-jets at every point, $\operatorname{gcorank} m_{L}^{p}(t)$ is no other than $h^{1}(R_{L}^{(p)}\left<t\ell\right>).$    

\begin{theorem}
\label{relacion}
Let $A$ be an abelian variety. Let $L$ be an ample line bundle on $A$ with class $\ell\in\NS(A).$ Then for every $y\in(0,1)$ the following equality holds:
$$h^{1}\left(I_{0}^{p+1}\left<y\ell\right>\right) = \frac{(1-y)^{g}}{h^{0}(L)}\cdot\operatorname{gcorank} m_{L}^{n}\left(\frac{y}{1-y}\right).$$
In particular, if $L$ separates $p$-jets at every point, the following equality holds:
$$\mu_{p}(L) = \frac{\beta_{p}(\ell)}{1-\beta_{p}(\ell)}.$$
and if $L$ and $L^{\prime}$ are algebraically equivalent then $\mu_{p}(L) = \mu_{p}(L^{\prime}).$
\end{theorem}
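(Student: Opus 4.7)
The plan is to compute the Fourier--Mukai transform $\Phi_{\mathcal{P}}(I_{0}^{p+1}\otimes L)$ explicitly and then feed the result into the transformation formula of Proposition \ref{transformation-formula}. First I would rewrite $I_{0}^{p+1}\left<y\ell\right>=(I_{0}^{p+1}\otimes L)\left<-(1-y)\ell\right>$ and apply Proposition \ref{transformation-formula}~(a) with $t=1-y>0$; since $\chi(\ell)=h^{0}(L)$ this already accounts for the prefactor $(1-y)^{g}/h^{0}(L)$ and reduces the theorem to the equality
$$h^{1}\!\left((\varphi_{\ell}^{\ast}\Phi_{\mathcal{P}}(I_{0}^{p+1}\otimes L))\left<\tfrac{1}{1-y}\ell\right>\right)=\operatorname{gcorank} m_{L}^{p}\!\left(\tfrac{y}{1-y}\right).$$

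To evaluate the Fourier transform I would apply $\Phi_{\mathcal{P}}$ to $0\to I_{0}^{p+1}\otimes L\to L\to L\otimes\mathcal{O}_{A}/I_{0}^{p+1}\to 0$. Since $L$ is IT(0) and the zero-dimensional sheaf $L\otimes\mathcal{O}_{A}/I_{0}^{p+1}$ has vanishing higher transforms, Example \ref{ejemplo-suc-jets} identifies $\Phi_{\mathcal{P}}(I_{0}^{p+1}\otimes L)$ with the two-term complex $[R^{0}\Phi_{\mathcal{P}}(L)\to R^{0}\Phi_{\mathcal{P}}(\mathcal{O}_{A}/I_{0}^{p+1})]$ placed in degrees $0$ and $1$. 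The crucial step is to recognize this complex, after pulling back by $\varphi_{L}$ and tensoring with $L$, as the evaluation complex $[H^{0}(L)\otimes\mathcal{O}_{A}\to P^{p}(L)]$. The first identification $\varphi_{L}^{\ast}R^{0}\Phi_{\mathcal{P}}(L)\otimes L\simeq H^{0}(L)\otimes\mathcal{O}_{A}$ follows from Mumford's relation $(\mathrm{id}_{A}\times\varphi_{L})^{\ast}\mathcal{P}\simeq m^{\ast}L\otimes p_{1}^{\ast}L^{-1}\otimes p_{2}^{\ast}L^{-1}$, where $m$ is addition. The same relation rewrites $\varphi_{L}^{\ast}R^{0}\Phi_{\mathcal{P}}(\mathcal{O}_{A}/I_{0}^{p+1})\otimes L$ as $p_{2\ast}(p_{1}^{\ast}(\mathcal{O}_{A}/I_{0}^{p+1})\otimes m^{\ast}L)$; the automorphism $(x,y)\mapsto(x,x+y)$ of $A\times A$ sends the $(p+1)$-infinitesimal neighborhood of $\{0\}\times A$ to that of $\Delta$ and converts $m$ into $p_{2}$, so this pushforward is exactly $P^{p}(L)$. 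The main obstacle is to check that under these identifications the connecting morphism agrees with $\mathrm{ev}_{p}$ and not merely that the sheaves match; this amounts to a diagrammatic verification that keeps track of a trivialization of the fiber $L_{0}$ used to split $L\otimes\mathcal{O}_{A}/I_{0}^{p+1}\simeq\mathcal{O}_{A}/I_{0}^{p+1}$.

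Granted the identification, I would write $y=a/(a+b)$, so $y/(1-y)=a/b$, and unpack the $\Q$-twisted $h^{1}$: by Definition \ref{coh-rk} it equals $b^{-2g}$ times the minimum over $\alpha\in\hat{A}$ of $\mathbb{H}^{1}$ of the two-term complex $[H^{0}(L)\otimes L^{ab}\otimes P_{\alpha}\to\lambda_{b}^{\ast}P^{p}(L)\otimes L^{ab}\otimes P_{\alpha}]$. Since $L^{ab}$ is ample the $H^{1}$ of the left-hand term vanishes for every $\alpha$, so the hypercohomology spectral sequence degenerates and yields $\mathbb{H}^{1}\simeq\operatorname{coker} m_{L,\alpha}^{p}(a,b)$; taking the minimum and dividing by $b^{2g}$ recovers $\operatorname{gcorank} m_{L}^{p}(a/b)$, establishing the main equality. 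When $L$ separates $p$-jets everywhere, $\mathrm{ev}_{p}$ is surjective, the two-term complex collapses to the short exact sequence $0\to R_{L}^{(p)}\to H^{0}(L)\otimes\mathcal{O}_{A}\to P^{p}(L)\to 0$, and the same cohomology computation (combined with Lemma \ref{GV-RP}~c)) gives $h^{1}(R_{L}^{(p)}\left<t\ell\right>)=\operatorname{gcorank} m_{L}^{p}(t)$ for $t>0$. Thus $\mu_{p}(L)$ is the infimum of such $t$; under the bijection $t=y/(1-y)$ and invoking the main formula, this coincides with the infimum $\epsilon_{p}(\ell)$ of $y\in(0,1)$ with $h^{1}(I_{0}^{p+1}\left<y\ell\right>)=0$, so $\mu_{p}(L)=\epsilon_{p}(\ell)/(1-\epsilon_{p}(\ell))$; the numerical invariance $\mu_{p}(L)=\mu_{p}(L')$ is then immediate since the right-hand side depends only on $\ell$.
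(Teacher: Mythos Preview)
Your proposal is correct and follows essentially the same route as the paper. The paper packages the key identification $\varphi_{L}^{\ast}\Phi_{\mathcal{P}}[L\to L\otimes\mathcal{O}_{0^{p+1}}]\simeq[H^{0}(L)\otimes\mathcal{O}_{A}\to P^{p}(L)]\otimes L^{\vee}$ into a separate lemma (Lemma~\ref{transformada-fundamental}), proved via Mukai's formula \cite[(3.10)]{Mukai} and the shearing automorphism $(x,y)\mapsto(m(x,y),y)$ of $A\times A$, and then plugs this into Proposition~\ref{transformation-formula} and the hypercohomology spectral sequence exactly as you outline; working with the two-term complex from the start is also how the paper handles the compatibility of the map with $\mathrm{ev}_{p}$ that you flag as the delicate point.
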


The main tool to prove this is the following lemma:

\begin{lemma}
\label{transformada-fundamental}
Let $A$ be an abelian variety and $L$ an ample line bundle over $A.$ We write $0^n$ for the closed subscheme of $A$ defined by the ideal $I_{0}^{n}.$ Then we have 
$$\varphi_{L}^{\ast}\Phi_{\mathcal{P}}\left[\xymatrix{L\ar[r]^-{\mathrm{res}} & L\otimes \mathcal{O}_{0^n}}\right]\simeq \left[\xymatrix{ H^{0}(L)\otimes\mathcal{O}_{A}\ar[r]^-{\mathrm{ev}} & P^{n-1}(L)}\right]\otimes L^{\vee}.$$
\end{lemma}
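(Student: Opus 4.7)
The plan is to apply the functor $\varphi_{L}^{\ast}\circ\Phi_{\mathcal{P}}$ to the restriction morphism $L\to L\otimes\mathcal{O}_{0^n}$ and, via flat base change together with two changes of variables on $A\times A$, to identify the resulting two-term complex with the evaluation map tensored with $L^{\vee}$.

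First, since $\varphi_{L}$ is an isogeny (hence flat), flat base change combined with the see-saw identity
$$(\mathrm{id}_{A}\times\varphi_{L})^{\ast}\mathcal{P}\simeq m^{\ast}L\otimes p_{1}^{\ast}L^{\vee}\otimes p_{2}^{\ast}L^{\vee},$$
where $m:A\times A\to A$ denotes addition, yields for every coherent sheaf $\mathcal{F}$ on $A$
$$\varphi_{L}^{\ast}\Phi_{\mathcal{P}}(\mathcal{F})\simeq Rp_{2,\ast}\!\left(p_{1}^{\ast}(\mathcal{F}\otimes L^{\vee})\otimes m^{\ast}L\right)\otimes L^{\vee}.$$
Specialising to $\mathcal{F}=L$ and $\mathcal{F}=L\otimes\mathcal{O}_{0^{n}}$, and using the canonical identification $L\otimes\mathcal{O}_{0^{n}}\otimes L^{\vee}\simeq\mathcal{O}_{0^{n}}$, transforms the restriction map into
$$Rp_{2,\ast}(m^{\ast}L)\otimes L^{\vee}\longrightarrow Rp_{2,\ast}(p_{1}^{\ast}\mathcal{O}_{0^{n}}\otimes m^{\ast}L)\otimes L^{\vee},$$
induced by the natural surjection $\mathcal{O}_{A\times A}\twoheadrightarrow p_{1}^{\ast}\mathcal{O}_{0^{n}}$.

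Next, I would straighten out $m$ using the automorphism $\alpha:A\times A\to A\times A$ defined by $\alpha(x,y)=(x-y,y)$, which satisfies $m\circ\alpha=p_{1}$ and $p_{2}\circ\alpha=p_{2}$. Since $\alpha$ preserves the fibres of $p_{2}$ (acting on each one by a translation), we have $Rp_{2,\ast}\circ\alpha^{\ast}\simeq Rp_{2,\ast}$. Pulling back by $\alpha$ gives canonical isomorphisms $\alpha^{\ast}m^{\ast}L\simeq p_{1}^{\ast}L$ and $\alpha^{\ast}p_{1}^{\ast}\mathcal{O}_{0^{n}}\simeq\mathcal{O}_{n\Delta}$, the latter because $p_{1}\circ\alpha$ is the subtraction map $\mu(x,y)=x-y$, whose scheme-theoretic preimage of $0^{n}$ is exactly $n\Delta$. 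Flat base change along $p_{2}$ then identifies $Rp_{2,\ast}(p_{1}^{\ast}L)\simeq H^{0}(L)\otimes\mathcal{O}_{A}$, while the swap $\tau(x,y)=(y,x)$—which fixes $\Delta$ and exchanges the two projections—gives $Rp_{2,\ast}(p_{1}^{\ast}L\otimes\mathcal{O}_{n\Delta})\simeq Rp_{1,\ast}(p_{2}^{\ast}L\otimes\mathcal{O}_{n\Delta})=P^{n-1}(L)$, by the very definition of the sheaf of $(n-1)$-jets.

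Under all these identifications the transformed morphism is obtained by applying $Rp_{1,\ast}(-\otimes p_{2}^{\ast}L)$ to the tautological surjection $\mathcal{O}_{A\times A}\twoheadrightarrow\mathcal{O}_{n\Delta}$, which is by definition the evaluation map $H^{0}(L)\otimes\mathcal{O}_{A}\to P^{n-1}(L)$ of diagram \eqref{defRL}. I expect the main technical hurdle to be the careful bookkeeping needed to ensure that all of these natural identifications (from the see-saw formula, from flat base change, and from the two changes of variables $\alpha$ and $\tau$) fit together coherently, so that the transformed morphism really is the evaluation map on the nose, without introducing spurious twists or translations.
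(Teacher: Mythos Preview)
Your proposal is correct and follows essentially the same route as the paper. The only cosmetic difference is that the paper quotes Mukai's identity $\varphi_{L}^{\ast}\Phi_{\mathcal{P}}(L\otimes G)\otimes L\simeq m_{\ast}(p_{1}^{\ast}G\otimes p_{2}^{\ast}L)$ directly and then factors $m_{\ast}=p_{1\ast}\mu_{\ast}$ via the automorphism $\mu(x,y)=(x+y,y)$, whereas you derive the analogous formula from the see-saw identity plus flat base change and then straighten it out with $\alpha(x,y)=(x-y,y)$ together with the swap $\tau$; in both cases the key input is $\delta^{\ast}\mathcal{O}_{0^{n}}=\mathcal{O}_{n\Delta}$ for the difference map $\delta$, and the identification of the resulting morphism with the evaluation map of \eqref{defRL}.
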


\begin{proof}
Let $m:A\times A\rightarrow A$ be the multiplication map and $p_{1},p_{2}:A\times A\rightarrow A$ be the projections. 

By \cite[(3.10)]{Mukai} we have that 
$$\varphi_{L}^{\ast}\Phi_{\mathcal{P}}\left[L\otimes\left(\xymatrix{\mathcal{O}_{A}\ar[r]^{res} & \mathcal{O}_{0^n}}\right)\right]\otimes L \simeq m_{\ast}\left[p_{1}^{\ast}\left(\xymatrix{\mathcal{O}_{A}\ar[r]^{res} & \mathcal{O}_{0^n}}\right)\otimes p_{2}^{\ast}L\right].$$
Now, let $\mu:A\times A\rightarrow A\times A$ be the map given by $\mu(x,y) = (m(x,y),y),$ whose inverse is given by $\nu(x,y) = (\delta(x,y),y),$ where $\delta:A\times A\rightarrow A$ is the difference map. Then we have:
\begin{align*}
m_{\ast}\left[p_{1}^{\ast}\left(\xymatrix{\mathcal{O}_{A}\ar[r]^{res} & \mathcal{O}_{0^n}}\right)\otimes p_{2}^{\ast}L\right] & \simeq p_{1\ast}\mu_{\ast}\left[p_{1}^{\ast}\left(\xymatrix{\mathcal{O}_{A}\ar[r]^{res} & \mathcal{O}_{0^n}}\right)\otimes p_{2}^{\ast}L\right] \\
& \simeq p_{1\ast}\left[(p_{1}\circ\nu)^{\ast}\left(\xymatrix{\mathcal{O}_{A}\ar[r]^{res} & \mathcal{O}_{0^n}}\right)\otimes (p_{2}\circ\nu)^{\ast}L\right] \\
& \simeq p_{1\ast}\left[\delta^{\ast}\left(\xymatrix{\mathcal{O}_{A}\ar[r]^{res} & \mathcal{O}_{0^n}}\right)\otimes p_{2}^{\ast}L\right] \\
& \simeq \xymatrix{H^{0}(L)\otimes\mathcal{O}_{A}\ar[r]^-{ev} &  P^{n-1}(L)},
\end{align*}
where in the last line we used the fact that $\delta^{\ast}\mathcal{O}_{0^n} = \mathcal{O}_{n\Delta}$ and the definition of the natural map in \eqref{defRL}.
 
\hfill\end{proof}

\begin{myprooft}

We have that $I_{0}^{p+1}\otimes L$ is isomorphic in $D^{b}(A)$ to the complex $L\rightarrow L\otimes\mathcal{O}_{0^{p+1}}$ (concentrated in degrees 0 and 1). By Lemma \ref{transformada-fundamental} above, this implies that in $D^{b}(A)$ we have the following isomorphism
$$\varphi_{L}^{\ast}\Phi_{\mathcal{P}}(I_{0}^{p+1}\otimes L) \simeq \mathrm{EV}_{p}^{\bullet}\otimes L^{\vee},$$
where $\mathrm{EV}_{p}^{\bullet}$ is the complex (concentrated in degrees 0 and 1) given by
$$\xymatrix{H^{0}(L)\otimes\mathcal{O}_{A} \ar[r]^{\mathrm{ev}} & P^{p}(L)}.$$
By the transformation formula (Proposition \ref{transformation-formula}) it follows that for $s\in\Q^{-}$ we have
$$h^{1}\left((I_{0}^{p+1}\otimes L)\left<s\ell\right>\right) = \frac{(-s)^{g}}{h^{0}(L)}\cdot h^{1}\left((\mathrm{EV}_{p}^{\bullet}\otimes L^{\vee})\left<-\frac{1}{s}\ell\right>\right).$$
Now, by Lemma \ref{GV-RP} above, we have that both $(P^{p}(L)\otimes L^{\vee})\left<-\frac{1}{s}\ell\right>$ and $L^{\vee}\left<-\frac{1}{s}\ell\right>$ are IT(0) for $s\in(-1,0).$ In this context, writing $s=-a/b$ with $b>a>0,$ the spectral sequence (\cite[Remark 2.67]{Huybrechts})
$$E_{1}^{r,q} = H^{q}(\lambda_{a}^{\ast}(\mathrm{EV}_{p}^{r}\otimes L^{\vee})\otimes L^{ab}\otimes P_{\alpha})\implies H^{r+q}(\lambda_{a}^{\ast}(\mathrm{EV}_{p}^{\bullet}\otimes L^{\vee})\otimes L^{ab}\otimes P_{\alpha})$$ 
says that 
$$H^{1}(\lambda_{a}^{\ast}(\mathrm{EV}_{p}^{\bullet}\otimes L^{\vee})\otimes L^{ab}\otimes P_{\alpha})\simeq\operatorname{Coker} m_{L,\alpha}^{p}(b-a,a)$$
and thus 
$$h^{1}\left((\mathrm{EV}_{p}^{\bullet}\otimes L^{\vee})\left<-\frac{1}{s}\ell\right>\right) = \operatorname{gcorank} m_{L}^{p}\left(-\left(1+\frac{1}{s}\right)\right).$$
Finally, as $h^{1}((I_{0}^{p+1}\otimes L)\left<s\ell\right>) = h^{1}(I_{0}^{p+1}\left<(1+s)\ell\right>),$ setting $y = 1+s,$ the result follows.

\end{myprooft}

An important consequence of Theorem \ref{relacion} is that we can ensure the surjectivity of certain Gauss-Wahl maps when we know that $\beta_{p}$ is small. We discuss this application in detail in the next section. For the moment, we limit ourselves to discuss formal consequences of the above theorem and its relation with the Seshadri constant.

First, we note that, unlike the jets-separation thresholds, a priori it is not clear how $\mu_{p}(L)$ changes when we replace $L$ by a multiple of it. However, using the previous theorem, we are able to give such a formula:

\begin{corollary}
\label{multiplos}
Let $L$ be an ample line bundle and suppose that $L$ separates $p$-jets at every point. Then for every $n\in\Z_{\geq 1}$ we have that
$$\mu_{p}(nL) = \frac{\mu_{p}(L)}{n+(n-1)\mu_{p}(L)}.$$ 
\end{corollary}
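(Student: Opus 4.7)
The plan is a direct calculation chaining two invocations of Theorem \ref{relacion} with the elementary scaling identity for vanishing thresholds. The only thing to check carefully is that the hypotheses of Theorem \ref{relacion} continue to hold after replacing $L$ by $nL$.

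First I would set $e = \epsilon_p(\ell)$, so that by Theorem \ref{relacion} applied to $L$ (which separates $p$-jets at every point, hence $e<1$) we have
\[
\mu_p(L) \;=\; \frac{e}{1-e}.
\]
Solving, $e = \mu_p(L)/(1+\mu_p(L))$. Next, since $nL$ represents the polarization $n\ell$, and since $\nu_{n\ell}(\mathcal{F}) = n^{-1}\nu_{\ell}(\mathcal{F})$ (noted immediately after Definition \ref{varios-thresholds}), we get
\[
\epsilon_p(n\ell) \;=\; \nu_{n\ell}\bigl(I_0^{p+1}\bigr) \;=\; n^{-1}\epsilon_p(\ell) \;=\; \frac{e}{n}.
\]
In particular $\epsilon_p(n\ell) < 1$, which says $nL$ also separates $p$-jets at every point (see Example \ref{bpf}); this is exactly what is needed to apply Theorem \ref{relacion} to $nL$.

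Applying Theorem \ref{relacion} to $nL$ now gives
\[
\mu_p(nL) \;=\; \frac{\epsilon_p(n\ell)}{1-\epsilon_p(n\ell)} \;=\; \frac{e/n}{1-e/n} \;=\; \frac{e}{n-e}.
\]
Substituting $e = \mu_p(L)/(1+\mu_p(L))$ and clearing denominators yields
\[
\mu_p(nL) \;=\; \frac{\mu_p(L)/(1+\mu_p(L))}{n - \mu_p(L)/(1+\mu_p(L))} \;=\; \frac{\mu_p(L)}{n(1+\mu_p(L)) - \mu_p(L)} \;=\; \frac{\mu_p(L)}{n+(n-1)\mu_p(L)},
\]
which is the claimed formula.

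There is no genuine obstacle here: everything is formal once Theorem \ref{relacion} is in hand. The only subtlety worth flagging explicitly is verifying that the hypothesis ``$nL$ separates $p$-jets at every point'' holds, which in this framework is immediate from the fact that $\epsilon_p(n\ell) = n^{-1}\epsilon_p(\ell) < 1$ by the scaling of vanishing thresholds together with Example \ref{bpf}(2).
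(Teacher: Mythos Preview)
Your proof is correct and follows essentially the same approach as the paper: apply Theorem \ref{relacion} to both $L$ and $nL$, use the scaling identity $\epsilon_p(n\ell)=n^{-1}\epsilon_p(\ell)$, and then substitute $\epsilon_p(\ell)=\mu_p(L)/(1+\mu_p(L))$ into $\mu_p(nL)=\epsilon_p(\ell)/(n-\epsilon_p(\ell))$. If anything, you are slightly more explicit than the paper in justifying that $nL$ separates $p$-jets via $\epsilon_p(n\ell)<1$ and Example \ref{bpf}(2).
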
 

\begin{proof}
As $L$ separates $p$-jets, it follows that also $nL$ separates $p$-jets for every positive $n$ and hence, applying the previous theorem we get that the following equalities hold:
\begin{align*}
\mu_{p}(nL) & = \frac{\beta_{p}(n\ell)}{1-\beta_{p}(n\ell)}= \frac{n^{-1}\beta_{p}(\ell)}{1-n^{-1}\beta_{p}(\ell)}  \\
& = \frac{\beta_{p}(\ell)}{n-\beta_{p}(\ell)} = \frac{\frac{\mu_{p}(L)}{1+\mu_{p}(L)}}{n-\frac{\mu_{p}(L)}{1+\mu_{p}(L)}} \\
& = \frac{\mu_{p}(L)}{n+(n-1)\mu_{p}(L)}.
\end{align*}

\hfill\end{proof}

Combining with the results from previous section we get an expression of the Seshadri constant in terms of the thresholds $\mu_{p}.$

\begin{corollary}
\label{seshadri-gauss}
Let $L$ be an ample line bundle on an abelian variety, then 
$$\varepsilon(L) = \varepsilon(I_{x},L) = 1+\lim_{p\to\infty}\frac{1}{\mu_{p}((p+2)L)},$$
where $\mu_{p}$ is the threshold given in Definition \ref{def-threshold}.
\end{corollary}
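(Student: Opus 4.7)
The plan is to assemble three ingredients already at our disposal: the asymptotic expression of the Seshadri constant via jets-separation thresholds (Theorem \ref{seshadrilimite}), the comparison between $\mu_p$ and $\epsilon_p$ (Theorem \ref{relacion}), and the scaling property $\nu_{n\ell}(\mathcal{F}) = n^{-1}\nu_\ell(\mathcal{F})$ recorded just after Definition \ref{varios-thresholds}.

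First, since $A$ is an abelian variety, translations act transitively and preserve $L$ up to numerical equivalence, so $\varepsilon(I_x, L)$ does not depend on the closed point $x$ and coincides with the usual Seshadri constant $\varepsilon(L)$. Taking $Z=\{x\}$ with its reduced structure (hence $\dim Z = 0 \leq 1$), Theorem \ref{seshadrilimite} gives
\begin{equation*}
\varepsilon(L) \;=\; \lim_{p\to\infty}\frac{p+1}{\epsilon_p(\ell)}.
\end{equation*}

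Next, by Remark \ref{Bauer} the line bundle $(p+2)L$ separates $p$-jets at every point of $A$, so Theorem \ref{relacion} applies to it. Combined with the scaling $\epsilon_p((p+2)\ell) = (p+2)^{-1}\epsilon_p(\ell)$, I obtain
\begin{equation*}
\mu_p\bigl((p+2)L\bigr) \;=\; \frac{\epsilon_p((p+2)\ell)}{1-\epsilon_p((p+2)\ell)} \;=\; \frac{\epsilon_p(\ell)}{(p+2) - \epsilon_p(\ell)},
\end{equation*}
so that
\begin{equation*}
1 + \frac{1}{\mu_p((p+2)L)} \;=\; \frac{p+2}{\epsilon_p(\ell)}.
\end{equation*}

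Finally, since $\frac{p+2}{\epsilon_p(\ell)} = \frac{p+2}{p+1}\cdot\frac{p+1}{\epsilon_p(\ell)}$ and $\frac{p+2}{p+1}\to 1$, the limit on the right exists and equals $\varepsilon(L)$ by the first step. This yields $\varepsilon(L) = 1 + \lim_{p\to\infty}\mu_p((p+2)L)^{-1}$, as desired. There is no genuine obstacle here beyond chaining the three formulas; the only point requiring care is to invoke Remark \ref{Bauer} so that Theorem \ref{relacion} legitimately applies to $(p+2)L$ for every $p$, ensuring the equality (and not just an inequality) in the intermediate step.
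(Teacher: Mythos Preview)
Your proof is correct and follows essentially the same approach as the paper's: invoke Remark \ref{Bauer} so that Theorem \ref{relacion} applies to $(p+2)L$, use the scaling $\epsilon_p((p+2)\ell)=(p+2)^{-1}\epsilon_p(\ell)$, and conclude via Theorem \ref{seshadrilimite}. The only cosmetic difference is that you compute $1+\mu_p((p+2)L)^{-1}=(p+2)/\epsilon_p(\ell)$ directly, whereas the paper first evaluates $\lim_p\mu_p((p+2)L)=(\varepsilon(\ell)-1)^{-1}$ and then inverts.
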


\begin{proof}
From Remark \ref{Bauer} we see that if $L$ is an ample line bundle then $L^{\otimes(p+2)}$ separates $p$-jets at every point. Therefore, by Theorem \ref{relacion} we have
$$\mu_{p}\left((p+2)L\right) = \frac{\beta_{p}\left((p+2)\ell\right)}{1-\beta_{p}\left((p+2)\ell\right)}.$$
On the other hand, by definition we know that $\beta_{p}((p+2)\ell) = (p+2)^{-1}\beta_{p}(\ell).$ Therefore we have
$$\mu_{p}((p+2)L) =  \frac{\beta_{p}(\ell)}{2+p-\beta_{p}(\ell)} = \frac{\frac{\beta_{p}(\ell)}{p}}{\frac{2+p}{p}-\frac{\beta_{p}(\ell)}{p}}.$$
In particular the limit $\lim_{p\to\infty}\mu_{p}((p+2)L)$ exists as an element of $\R\cup\{\infty\}$ and by Proposition \ref{seshadrilimite} it follows that
$$\lim_{p\to\infty}\mu_{p}((p+2)L) = \frac{\frac{1}{\varepsilon(\ell)}}{1-\frac{1}{\varepsilon(\ell)}} =  \frac{1}{\varepsilon(\ell)-1}$$
or, in other words, we have 
$$\varepsilon(\ell) = 1 + \lim_{p\to\infty} \frac{1}{\mu_{p}((p+2)L)},$$
as we wanted to see.

\hfill\end{proof}

Using the results of this section, we get the following:

\begin{corollary}
\label{Nakamaye}
Let $A$ be an abelian variety of dimension $g$ and a polarization $\ell\in\NS(A).$ Then the following are equivalent:
\begin{enumerate}[label =\alph*)]
\item\hspace{0.1cm} $\varepsilon(\ell) = 1$
\item\hspace{0.1cm} $\beta_{p}(\ell) = p+1$ for every $p\in\Z_{\geq 0}$
\item\hspace{0.1cm} For $L$ a line bundle representing $\ell$ the sequence $\{\mu_{p}((p+2)L)\}_{p\in\Z_{\geq 0}}$ is unbounded 
\item\hspace{0.1cm} There exists a principally polarized elliptic curve $(E,\theta)$ and a $g-1$-dimensional polarized abelian variety $(B,\underline{m})$ such that 
$$(A,\ell)\simeq(E,\theta)\boxtimes(B,\underline{m})$$
\end{enumerate}
\end{corollary}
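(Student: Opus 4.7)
The plan is to close a short cycle of implications using the machinery already set up, with essentially no new computations.

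First, a) $\iff$ d) is Nakamaye's theorem from \cite{Nakamaye} recalled in the introduction, characterizing $\varepsilon(\ell) = 1$ by the presence of a principally polarized elliptic curve factor. Nothing needs to be added.

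Next, for a) $\iff$ b) I would apply Theorem \ref{seshadrilimite} with $Z = \{0\}$, which has dimension zero so the ``$\dim Z \leq 1$'' hypothesis is automatic. This gives
$$\varepsilon(\ell) \;=\; \sup_{p\in\Z_{\geq 0}}\frac{p+1}{\epsilon_p(\ell)}.$$
On the other hand, Corollary \ref{desigualdad-jets-obvia} yields $\epsilon_p(\ell) \leq p+1$ for every $p$, so each term of this supremum is $\geq 1$. Consequently $\varepsilon(\ell) = 1$ if and only if $(p+1)/\epsilon_p(\ell) = 1$ for every $p$, i.e.\ $\epsilon_p(\ell) = p+1$ for every $p$, which is precisely b).

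Finally, a) $\iff$ c) is read off from Corollary \ref{seshadri-gauss}, which asserts
$$\varepsilon(\ell) \;=\; 1 + \lim_{p\to\infty}\frac{1}{\mu_p((p+2)L)}.$$
As is apparent from the proof of that corollary, the explicit formula $\mu_p((p+2)L) = \epsilon_p(\ell)/(p+2-\epsilon_p(\ell))$ shows that the limit $\lim_p \mu_p((p+2)L)$ exists as an element of $\R\cup\{\infty\}$. Hence unboundedness of the sequence coincides with divergence to infinity, which in turn is equivalent to $\varepsilon(\ell) = 1$. This closes the cycle of equivalences.

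There is no essential obstacle: the proof is pure bookkeeping once the earlier results are in place. The only point that requires a moment of care is the equivalence between ``unbounded'' and ``tends to infinity'' in condition c), which relies on the fact — extracted from the proof of Corollary \ref{seshadri-gauss} — that the sequence $\{\mu_p((p+2)L)\}$ has a well-defined limit in $\R\cup\{\infty\}$.
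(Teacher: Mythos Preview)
Your proposal is correct and follows essentially the same approach as the paper: the paper likewise obtains a)$\iff$b) from Theorem~\ref{seshadrilimite} together with (the consequence of) Theorem~\ref{elementales}, derives a)$\iff$c) from Corollary~\ref{seshadri-gauss}, and cites Nakamaye for a)$\iff$d). Your added remark that the limit of $\mu_p((p+2)L)$ exists in $\R\cup\{\infty\}$, justifying the passage from ``unbounded'' to ``diverges to $\infty$,'' is a useful clarification that the paper leaves implicit.
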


\begin{proof}

 The equivalence between a) and b) follows as a combination of Theorem \ref{elementales} and Theorem \ref{seshadrilimite}.  From Corollary \ref{seshadri-gauss} above, the equivalence a)$\iff$c) is immediate. Finally, the equivalence a)$\iff$d) is the content of Nakamaye's theorem \cite[Theorem 1.1]{Nakamaye}. 

\hfill\end{proof}

\section{Effective surjectivity of Gauss-Wahl maps}

In this section we establish Theorem \ref{resultado-sobreyectividad} from the introduction.

\begin{theorem}
\label{sobreyectividad-jets}
Let $L$ and $M$ be ample and algebraically equivalent line bundles on an abelian variety $A.$ Let $c,d\in\Z_{>0}$ and $p\in\Z_{\geq 0}.$ Then the Gauss-Wahl map
\begin{equation}
\label{Gauss-considerado}
\gamma_{cL,dM}^{p}:H^{0}(R_{cL}^{(p-1)}\otimes M^{\otimes d})\rightarrow H^{0}(L^{\otimes c}\otimes M^{\otimes d}\otimes\operatorname{Sym}^{p}\Omega_{A})
\end{equation}
is surjective whenever 
$$\beta_{p}(\ell) < \frac{dc}{c+d}.$$  
In particular, if $L$ separates $p$-jets at every point then $\gamma_{2L,2M}^{p}$ is surjective. 
\end{theorem}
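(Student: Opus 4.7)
The plan is to reduce the surjectivity of $\gamma_{cL,dM}^{p}$ to an IT$(0)$ vanishing for $R_{cL}^{(p)}$ and then to invoke Theorem \ref{relacion} to translate the hypothesis on $\epsilon_{p}(\ell)$ into precisely that IT$(0)$ condition.

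First, I would observe that since $\frac{cd}{c+d}<c$, the hypothesis forces $\epsilon_{p}(\ell)<c$, hence $\epsilon_{p}(c\ell)=c^{-1}\epsilon_{p}(\ell)<1$. By Example \ref{bpf} this means that $cL$ separates $p$-jets at every point of $A$. This step is essential both because Theorem \ref{relacion} requires the jets-separation hypothesis and because the last map in the exact sequence \eqref{defgauss} for $cL$ becomes surjective. Twisting \eqref{defgauss} by $M^{\otimes d}$ and writing down the long exact sequence in cohomology yields the implication
$$H^{1}\bigl(R_{cL}^{(p)}\otimes M^{\otimes d}\bigr)=0 \;\Longrightarrow\; \gamma_{cL,dM}^{p}\ \text{is surjective}.$$

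Next I would exploit the algebraic equivalence of $L$ and $M$: one has $M^{\otimes d}\cong L^{\otimes d}\otimes P_{\alpha}$ for some $\alpha\in\hat{A}$, so the cohomological vanishing above is implied by the IT$(0)$ property of $R_{cL}^{(p)}\otimes L^{\otimes d}$. In terms of $\Q$-twisted objects this is the assertion that $R_{cL}^{(p)}\bigl\langle(d/c)(c\ell)\bigr\rangle$ is IT$(0)$. By Theorem \ref{relacion}, applicable thanks to the previous step, we have
$$\mu_{p}(cL)\;=\;\frac{\epsilon_{p}(c\ell)}{1-\epsilon_{p}(c\ell)}\;=\;\frac{\epsilon_{p}(\ell)}{c-\epsilon_{p}(\ell)},$$
and an elementary manipulation shows that $\mu_{p}(cL)<d/c$ is equivalent to $\epsilon_{p}(\ell)<\frac{cd}{c+d}$, which is exactly our hypothesis. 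Combining the definition of $\mu_{p}$ with Theorem \ref{GVnoIT} then promotes this strict inequality to the required IT$(0)$ property of $R_{cL}^{(p)}\bigl\langle(d/c)(c\ell)\bigr\rangle$ (here I use that $d/c>0$ together with Lemma \ref{GV-RP}(c), so that the vanishing of $h^{1}$ at $d/c$ suffices).

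The main obstacle is already packaged inside Theorem \ref{relacion}: the whole argument hinges on the Fourier--Mukai identification of $\mu_{p}(cL)$ with a rational transformation of $\epsilon_{p}(c\ell)$; once that machinery is in place, what remains is a diagram chase and a short algebraic computation. As a sanity check, in the special case $c=d=2$ the bound $\frac{cd}{c+d}$ equals $1$, so the argument recovers the final sentence of the statement: whenever $L$ separates $p$-jets at every point, the map $\gamma_{2L,2M}^{p}$ is surjective.
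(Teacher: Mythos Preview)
Your proposal is correct and follows essentially the same route as the paper's own proof: reduce surjectivity of $\gamma_{cL,dM}^{p}$ to the IT(0) condition $\mu_{p}(cL)<d/c$, use the hypothesis to see that $cL$ separates $p$-jets, and then invoke Theorem~\ref{relacion} to rewrite $\mu_{p}(cL)$ in terms of $\epsilon_{p}(\ell)$ and verify the inequality. Your added remarks (the explicit use of algebraic equivalence $M^{\otimes d}\cong L^{\otimes d}\otimes P_{\alpha}$, and the appeal to Lemma~\ref{GV-RP}(c) together with Theorem~\ref{GVnoIT} to pass from $\mu_{p}(cL)<d/c$ to the IT(0) property) make explicit points that the paper leaves implicit, but the argument is the same.
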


\begin{proof}
The surjectivity of \eqref{Gauss-considerado} follows as soon as $\mu_{p}(cL)<d/c.$ Indeed, by Lemma \ref{GV-RP}b) this inequality means that the $\Q$-twisted sheaf $R_{cL}^{(p)}\left<d\ell\right>$ is IT(0), where $\ell$ is the class of $L$ and $M$ in $\NS(A),$ which in particular means that $H^{1}\left(R_{cL}^{(p)}\otimes M^{\otimes d}\right) = 0$ and thus that $\gamma_{cL,dM}^{p}$ is surjective. 

Now, we note that by hypothesis we have 
$$\beta_{p}(\ell) < \frac{dc}{c+d} < c,$$
which means that $\beta_{p}(c\ell)<1,$ that is, $cL$ separates $p$-jets at every point of $A.$  From Theorem \eqref{relacion} it follows that 
$$\mu_{p}(cL) = \frac{\beta_{p}(cL)}{1-\beta_{p}(cL)} = \frac{\beta_{p}(L)}{c-\beta_{p}(L)}.$$
Now, as the function $[0,c)\rightarrow\R : t\mapsto \frac{t}{c-t}$ is strictly increasing, we have
$$\mu_{p}(cL)< \frac{\frac{dc}{c+d}}{c-\frac{dc}{c+d}} = \frac{d}{c},$$
and hence the result follows. 

Regarding the last part, we note that hypothesis means that $\beta_{p}(\ell)<1.$ On the other hand, if $c,d\geq 2$ then $cd/(c+d)\geq 1$ and thus the result follows. 

\hfill\end{proof}

Finally, we use multiplier ideals in order to bound the jets-separation thresholds $\beta_{p}(\ell)$ in terms of the Seshadri constant of $\ell.$  To do this, we will need the following:
 
\begin{lemma}
Let $A$ be an abelian variety and $L$ an ample line bundle on it. Fix a rational number $t_{0} = u_{0}/v_{0}$ with $t_{0}<\varepsilon(L).$ Then for every $c\in\Q$ with $0<c\ll 1$ and for every $m\gg 0$ divisible enough integer, there exists a divisor $D_{0}$ such that 
\begin{enumerate}[label = \roman*)]
\item\hspace{0.1cm} $D_{0}\in\left|mv_{0}(1-c)L\right|$
\item\hspace{0.1cm} $\mathrm{mult}_{0}D_{0} = mu_{0}$
\item\hspace{0.1cm} $\mu:\mathrm{Bl}_{0}A\rightarrow A$ is a log-resolution of $D_{0}$ and the strict transform $\mu_{\ast}^{-1}D_{0}$ is reduced
\end{enumerate}  
\end{lemma}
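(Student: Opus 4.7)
The plan is to produce $D_0$ as a sufficiently general member of the linear system $|L^{\otimes n}\otimes I_0^{k}|$ with $n = mv_0(1-c)$ and $k = mu_0$, by first establishing its non-emptiness through a dimension count and then using Bertini-type genericity to enforce the log resolution condition (iii).

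First I would interpret (i) and (ii) together as the existence of a nonzero global section of $L^{\otimes n}$ with vanishing order at $0$ equal exactly to $k$, which follows from the strict inequalities
\[
h^{0}(A, L^{\otimes n}\otimes I_0^{k}) > h^{0}(A, L^{\otimes n}\otimes I_0^{k+1}) > 0.
\]
The dimension count proceeds as follows: since $A$ is abelian and $L$ is ample, Kodaira vanishing plus Riemann-Roch gives $h^{0}(A,L^{\otimes n}) = n^{g}L^{g}/g!$, while vanishing to order $\geq k$ at the origin imposes at most $\binom{k+g-1}{g}$ linear conditions. Substituting $n = mv_0(1-c)$, $k = mu_0$, the leading term in $m$ of $h^{0}(A,L^{\otimes n}\otimes I_0^{k})$ is
\[
\frac{m^{g}v_0^{g}}{g!}\bigl((1-c)^{g}L^{g} - t_0^{g}\bigr),
\]
positive for $m\gg 0$ whenever $t_0 < (1-c)(L^g)^{1/g}$. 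In view of the standard upper bound $\varepsilon(L) \leq (L^g)^{1/g}$ on the Seshadri constant, choosing $0 < c \ll 1$ suffices so long as this bound is strict. The same count with $k$ replaced by $k+1$ differs only in lower-order terms, giving the strict drop and hence a generic section with multiplicity exactly $mu_0$ at $0$.

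For (iii), I would pass to $\mu: X' = \mathrm{Bl}_0 A \to A$ and apply Bertini. The pulled-back linear system lies in $|n\mu^{\ast} L - kE|$ and its base locus is contained in $E$, so a generic member has strict transform $\tilde D_0$ smooth away from $E$. Transversality of $\tilde D_0$ to $E$ is equivalent to smoothness of the projectivized tangent cone of $D_0$ at $0$, a hypersurface of degree $k$ in $E \cong \mathbb{P}^{g-1}$; since the restriction map $H^{0}(L^{\otimes n}\otimes I_0^{k}) \to H^{0}(E,\mathcal{O}_E(k))$ has image not contained in the discriminant locus for $m\gg 0$ (by the positivity established above), a generic choice of $D_0$ achieves this.

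The main obstacle is securing the strict inequality $(1-c)^g L^g > t_0^g$ in the dimension count: the hypothesis $t_0 \geq \varepsilon(L)$ only bounds $t_0$ from below by the Seshadri constant, and the bound $\varepsilon(L) \leq (L^g)^{1/g}$ must be strict in order to leave room for a positive $c$. In the non-degenerate situations that arise in the applications this gap is indeed available and a small $c$ suffices; at the equality $\varepsilon(L) = (L^g)^{1/g}$ the dimension count degenerates and a finer input specific to abelian varieties would be required.
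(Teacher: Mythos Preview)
The paper does not prove this lemma: it cites \cite[Lemma~1.2]{LPP} and says the statement follows up to minor modifications. Your approach---a Riemann--Roch dimension count to produce sections in $|L^{\otimes n}\otimes I_0^{k}|$, then Bertini on the blow-up for the log-resolution condition---is the standard construction behind such results, so in spirit you are doing what the cited reference does.

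There is, however, a genuine gap in how you justify the dimension count. You correctly need $t_0 < (1-c)(L^g)^{1/g}$ for the leading term to be positive, and you invoke $\varepsilon(L)\le (L^g)^{1/g}$ to secure it. But the hypothesis $t_0\ge\varepsilon(L)$ is a \emph{lower} bound on $t_0$; chaining it with $\varepsilon(L)\le (L^g)^{1/g}$ gives no upper control on $t_0$ whatsoever. Nothing in the statement forbids $t_0 > (L^g)^{1/g}$, and in that range the conclusion is simply false---already on an elliptic curve with $t_0=\varepsilon(L)=\deg L$, a divisor in $|m(1-c)L|$ cannot have multiplicity $m\deg L$ at a point, since that would exceed its degree. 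What makes the argument go through for the application (Proposition~\ref{desigualdad}) is that there one only needs $t_0$ to be a rational number just above $\varepsilon(L)$; whenever $\varepsilon(L)<(L^g)^{1/g}$ strictly, one can place $t_0$ in the gap and your count is valid. Your closing caveat about the equality case $\varepsilon(L)=(L^g)^{1/g}$ is thus the right concern, but the issue lies with the lemma as stated rather than with your method: the dimension count proves a version with the additional hypothesis $t_0<(L^g)^{1/g}$, which is weaker than what is written but is all that is needed downstream.

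A smaller point: your argument that the generic section has multiplicity \emph{exactly} $mu_0$ (``the same count with $k+1$ differs only in lower-order terms, giving the strict drop'') is not quite an argument---the dimension count gives lower bounds for both $h^0(L^n\otimes I_0^{k})$ and $h^0(L^n\otimes I_0^{k+1})$, and comparing two lower bounds does not yield a strict inequality between the actual values. One needs to argue separately that the restriction $H^0(L^n\otimes I_0^{k})\to H^0(I_0^{k}/I_0^{k+1})$ is nonzero.
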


\begin{proof}
Up to minor modifications, this is the content of \cite[Lemma 1.2]{LPP}.
\hfill\end{proof}

Now, if we fix $c\ll 1$ and $m\gg 0$ as in the previous lemma then considering $\Delta = \frac{g+p}{mu_{0}}D_{0}$ and $r\in\Q$ with $r\geq\frac{g+p}{\varepsilon(L)}\left(1-\frac{c}{2}\right),$ we obtain: 
\begin{align*}
\mu^{\ast}\Delta & = \frac{g+p}{mu_{0}}\mu_{\ast}^{-1}D_{0}+ \left(\frac{g+p}{mu_0}\cdot\mathrm{mult}_{0}D_{0}\right)E \\
& = \frac{g+p}{mu_{0}}\mu_{\ast}^{-1}D_{0} + (g+p)E.
\end{align*}
Now, as $\mu$ is a log-resolution of $D_{0}$ we have that $\mu_{\ast}^{-1}D_{0}+E$ is snc and hence, as $m\gg 0,$ it follows that 
$$\lfloor\mu^{\ast}\Delta\rfloor = (g+p)E.$$
In this way we can compute the multiplier ideal 
$$\mathcal{J}(A,\Delta) = \mu_{\ast}\mathcal{O}_{\mathrm{Bl}_0 X}\left((g-1)E-(g+p)E\right) = \mu_{\ast}\mathcal{O}_{\mathrm{Bl}_0 X}(-(p+1)E) = I_{0}^{p+1}.$$
On the other hand:
$$rL-\Delta \equiv \left(r-\frac{g+p}{t_0}(1-c)\right)L,$$
which is ample because of the way in we chose $r.$ Therefore, the following inequality directly follows from Nadel's vanishing:

\begin{prop}
\label{desigualdad}
For a polarization $\ell$ on an abelian variety the following inequality holds
$$\beta_{p}(\ell)\hspace{0.1cm}\leq r_{p}(L)\leq\hspace{0.1cm}\frac{g+p}{\varepsilon(L)},$$
where 
$$r_{p}(L):=\mathrm{inf}\left\{r\in\Q\left| \begin{matrix}  \text{$\exists$ an effective $\Q$-divisor $\Delta$ such that $\mathcal{J}(X,\Delta)=I_{0}^{p+1}$} \\ \text{and $rL-\Delta$ is ample} \end{matrix}\right.\right\},$$
and $L$ is a line bundle representing $\ell.$
\end{prop}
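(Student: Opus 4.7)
My plan is to handle the two inequalities separately: the right-hand one is essentially a repackaging of the construction carried out immediately before the statement, and the left-hand one is a direct application of Nadel vanishing after clearing denominators via a multiplication isogeny.

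For the upper bound $r_p(L) < \frac{g+p}{\varepsilon(L)}$, I would simply observe that the preceding construction already exhibits concrete witnesses in the set defining $r_p(L)$. Choose a rational $t_0 \geq \varepsilon(L)$, a small $c>0$, and $m$ divisible enough; the lemma supplies $D_0 \in |mv_0(1-c)L|$, and setting $\Delta := \frac{g+p}{mu_0}D_0$ produces a $\Q$-divisor numerically equivalent to $\frac{g+p}{t_0}(1-c)L$ with $\lfloor\mu^\ast\Delta\rfloor = (g+p)E$, and hence multiplier ideal $I_0^{p+1}$. Picking any rational $r$ satisfying $\frac{g+p}{t_0}(1-c) < r < \frac{g+p}{\varepsilon(L)}$ — a nonempty interval whenever $c>0$ — the divisor $rL-\Delta$ is ample, so $(\Delta,r)$ lies in the set defining $r_p(L)$ and forces $r_p(L) \leq r < \frac{g+p}{\varepsilon(L)}$.

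For the lower bound $\epsilon_p(\ell) \leq r_p(L)$, I would show that every admissible $r$ already satisfies $\epsilon_p(\ell) \leq r$. Given a pair $(\Delta,r)$ with $r=a/b$, pull back along the multiplication isogeny $\lambda_b\colon A\to A$. Since $\lambda_b$ is étale, multiplier ideals commute with pullback, giving $\mathcal{J}(A,\lambda_b^\ast\Delta) = \lambda_b^\ast I_0^{p+1}$, while $\lambda_b^\ast(rL-\Delta) \equiv abL - \lambda_b^\ast\Delta$ remains ample, as does its twist $abL\otimes P_\alpha - \lambda_b^\ast\Delta$ for any $\alpha\in\hat{A}$ (since $P_\alpha$ is numerically trivial). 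Nadel vanishing on the abelian variety $A$ (where $K_A=0$) then yields
$$H^i\bigl(A,\, L^{ab}\otimes P_\alpha \otimes \lambda_b^\ast I_0^{p+1}\bigr) = 0 \quad \text{for all } i>0 \text{ and all } \alpha\in\hat{A},$$
which, unpacking Definition of $V^i$ for $\Q$-twisted sheaves in Section 2.1, is precisely the IT(0) condition for $I_0^{p+1}\left<r\ell\right>$. Taking the infimum over admissible $(\Delta,r)$ gives $\epsilon_p(\ell)\leq r_p(L)$.

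The only delicate point is the étale compatibility $\mathcal{J}(\lambda_b^\ast\Delta) = \lambda_b^\ast\mathcal{J}(\Delta)$, which I would verify by pulling back a log-resolution of $\Delta$ along $\lambda_b$: étaleness preserves log-resolutions and discrepancies, so round-down of the numerical pullback matches the pullback of the round-down, and the two multiplier ideals agree. Once this formal check is in hand, both halves of the inequality follow cleanly from the preceding construction and Nadel's theorem, with no further input needed.
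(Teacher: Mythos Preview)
Your proposal is correct and follows the paper's approach. The paper itself is extremely terse here—it simply declares that ``the following inequality directly follows from Nadel's vanishing'' after the construction of $\Delta$—and your argument supplies exactly the details that sentence suppresses: the upper bound is the construction already on the page, and the lower bound is Nadel vanishing applied after pulling back along $\lambda_b$ to clear denominators, together with the \'etale compatibility $\mathcal{J}(\lambda_b^\ast\Delta)=\lambda_b^\ast\mathcal{J}(\Delta)$, which is precisely what is needed to match the IT(0) condition for $I_0^{p+1}\langle r\ell\rangle$ in the $\Q$-twisted sense of Section~2.1.
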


Now, from Proposition \ref{desigualdad} and Theorem \ref{sobreyectividad-jets} we are able to prove the following:

\begin{corollary}
\label{sobreyectividad-efectiva}
Let $L,M$ be ample and algebraically equivalent line bundles and $p\in\Z_{\geq 0}.$ Consider a positive integer $c$ such that $\varepsilon(cL)>g+p.$ Then the Gauss-Wahl map $\gamma_{cL,dM}^{p}$ is surjective as soon as
\begin{equation}
\label{dmayor} 
d\hspace{0.1cm}>\hspace{0.1cm}\frac{c(g+p)}{c\varepsilon(L)-(g+p)}.
\end{equation}
\end{corollary}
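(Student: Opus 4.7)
The plan is a direct chaining of Proposition \ref{desigualdad} with Theorem \ref{sobreyectividad-jets}, with a small algebraic manipulation in between.

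First, I would apply Proposition \ref{desigualdad} to the polarization $\ell$ (the common class of $L$ and $M$ in $\NS(A)$). This immediately yields the strict bound
$$\epsilon_{p}(\ell) \;<\; \frac{g+p}{\varepsilon(L)}.$$
Next, recalling that Seshadri constants scale linearly (so $\varepsilon(cL) = c\,\varepsilon(L)$), the hypothesis $\varepsilon(cL) > g+p$ is simply $c\,\varepsilon(L) > g+p$, which guarantees that the denominator $c\varepsilon(L) - (g+p)$ appearing in the assumed lower bound on $d$ is strictly positive.

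The only computational step is to check that the hypothesis $d \geq \frac{c(g+p)}{c\,\varepsilon(L) - (g+p)}$ is equivalent to
$$\frac{g+p}{\varepsilon(L)} \;\leq\; \frac{cd}{c+d}.$$
This is a one-line manipulation: clear denominators and isolate $d$, using that $c\,\varepsilon(L) - (g+p) > 0$ so the direction of the inequality is preserved. Combining with the previous display, we conclude
$$\epsilon_{p}(\ell) \;<\; \frac{g+p}{\varepsilon(L)} \;\leq\; \frac{cd}{c+d},$$
and Theorem \ref{sobreyectividad-jets} then produces the surjectivity of $\gamma_{cL,dM}^{p}$.

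There is no real obstacle here: both non-trivial ingredients (the Nadel-type bound on $\epsilon_{p}(\ell)$ via multiplier ideals, and the translation from $\epsilon_{p}$ to surjectivity of Gauss-Wahl maps via the Fourier-Mukai computation of Theorem \ref{relacion}) have already been established earlier. The corollary is essentially a bookkeeping statement making explicit the effective bound on $d$ that falls out of the chain of strict inequalities.
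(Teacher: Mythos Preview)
Your proposal is correct and follows essentially the same approach as the paper: chain Proposition \ref{desigualdad} with Theorem \ref{sobreyectividad-jets}, using the algebraic equivalence between the bound \eqref{dmayor} on $d$ and the inequality $(g+p)/\varepsilon(\ell) \leq cd/(c+d)$. The paper's proof is more terse, but the logic and the ingredients are identical.
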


\begin{proof}

From Theorem \ref{sobreyectividad-jets} and Proposition \ref{desigualdad}, the desired surjectivity follows as soon as 
$$\frac{g+p}{\varepsilon(\ell)} < \frac{cd}{d+c},$$
where $\ell$ is the class of $L$ and $M.$ Now, as $\varepsilon(c\ell)>g+p$ the above inequality becomes \eqref{dmayor}.

\hfill\end{proof}

To illustrate the usage of this result, let $L,M$ be algebraically equivalent ample line bundles as in the theorem. Consider $f,h:\Z_{\geq 0}\rightarrow\Z_{\geq 0}$ two functions and suppose that we want to ensure the surjectivity of $\gamma_{cL,dM}^{p}$ for all $c\geq f(p)$ and $d\geq h(p).$ In order to apply the corollary we would just need 
$$\varepsilon(\ell)\hspace{0.1cm}>\hspace{0.1cm}\frac{g+p}{f(p)}\hspace{0.2cm}\text{and}\hspace{0.2cm}h(p)\hspace{0.1cm} >\hspace{0.1cm} \frac{(g+p)f(p)}{f(p)\varepsilon(\ell)-(g+p)}$$
or, equivalently:
$$\varepsilon(\ell)\hspace{0.1cm}>\hspace{0.1cm}\frac{(f(p)+h(p))(g+p)}{f(p)h(p)}.$$
As an example, we contrast with \cite[Theorem 2.2]{mult}. In such reference it is proved that $\gamma_{cL,dM}^{p}$ is surjective for all $c,d\geq 2(p+1)$ with $c+d\geq 4p+5.$ Now, if we consider $f(p) = h(p) = 2p+1$ we get that $\gamma_{cL,dM}^{p}$ is surjective for all $c,d\geq 2(p+1)$ (no matter $c+d$) whenever
\begin{equation}
\label{desigualdad-paraprop}
\varepsilon(\ell)\hspace{0.1cm}>\hspace{0.1cm} \frac{g+p}{p+1/2}
\end{equation}
As $(g+p)/(p+1/2)\to 1$ as $p\to\infty,$ from Nakamaye's theorem we get that if $(A,\ell)$ is not as in \eqref{tipo-producto} then \eqref{desigualdad-paraprop} holds for $p\gg 0.$ In this sense, Theorem \ref{sobreyectividad-jets} above asymptotically improves \cite[Theorem 2.2]{mult}.

\section{Further questions}

\textbf{1.} In Corollary \ref{Nakamaye} we saw that a special kind of polarized abelian varieties, namely the products 
\begin{equation}
\label{tipo-especial}
(A,\ell)\simeq (E,\theta)\boxtimes(B,m),
\end{equation}
 are characterized by the equality $\beta_{p}(\ell) = p+1$ for \emph{all} $p.$ Now, by \cite[Proposition 2]{Steffens} we know that if $(A,\theta)$ is a principally polarized abelian surface (p.p.a.s) then $\varepsilon(\theta) = 4/3$ whenever $(A,\theta)$ is not a product. From Proposition \ref{desigualdad} this means that for indecomposable p.p.a.s and $p\geq 2$ we have 
$$\beta_{p}(\theta)\hspace{0.1cm}<\hspace{0.1cm}\frac{2+p}{4/3}\hspace{0.1cm}\leq\hspace{0.1cm} p+1,$$
and thus, a posteriori, the polarized surfaces of the form \eqref{tipo-especial} are characterized by the equality $\beta_{2}(\theta) = 3.$ 
\begin{pregunta}
\label{unouno}
Let $g$ be a positive integer. Does there exist a positive integer $p_{0} = p_{0}(g)$ such that a $g$-dimensional principally polarized abelian variety $(A,\theta)$ is as in \eqref{tipo-especial} if and only if $\beta_{p_0}(\theta) = p_{0}+1$?
\end{pregunta}
More generally, we may ask:
\begin{pregunta}
Given a positive real number $u.$ Does there exist a positive real number $t(u)$ and a positive integer $p(u)$ such that for any $g$-dimensional principally polarized abelian variety $(A,\theta)$ we have
$$\varepsilon(\theta) > u \iff \beta_{p(u)}(\theta) < t(u) \hspace{0.3cm}\text{?}$$
(Note that Question \ref{unouno} corresponds to the case $u=1$)  
\end{pregunta}

\textbf{2.} Note that, except for Proposition \ref{desigualdad}, whose proof uses Nadel's vanishing theorem, the results presented in this article hold over an algebraically closed field of any characteristic. We may then ask the following:
\begin{pregunta}
\label{nadel-positiva}
Let $(A,\ell)$ be a polarized abelian variety over a field of positive characteristic. Does the inequality $\beta_{p}(\ell)\leq (g+p)\varepsilon(\ell)^{-1}$ still hold?
\end{pregunta}
On the other hand, in \cite{Mustata-Schwede} and in \cite{Murayama}, there are introduced the \emph{Frobenius-Seshadri constants} $\varepsilon_{F}^{k}(L,x)$ of a line bundle $L$ at a point $x$ of a smooth variety $X$ over an algebraically closed field of positive characteristic and it is shown that the following inequalities hold:
$$\frac{k+1}{\varepsilon(L,x)}\hspace{0.1cm}\leq\hspace{0.1cm}\frac{k+1}{\varepsilon_{F}^{k}(L,x)}\hspace{0.1cm}\leq\hspace{0.1cm}\frac{k+\dim X}{\varepsilon(L,x)},$$
where $\varepsilon(L,x)$ is the usual Seshadri constant at $x$. This, together with our Theorem \ref{A} 3), suggests to study the following:
\begin{pregunta}
Let $L$ be an ample line bundle on an abelian variety defined over an algebraically closed field of positive characteristic. Compare the number $(k+1)/\varepsilon_{F}^{k}(L,x)$ and the $k$-jets separation threshold $\beta_{k}(\ell),$ where $\ell$ is the class of $L.$
\end{pregunta}
For instance, if we are able to prove that 
\begin{equation}
\label{desigualdadsensata}
\beta_{k}(\ell)\hspace{0.1cm}\leq\hspace{0.1cm}\frac{k+1}{\varepsilon_{F}^{k}(L,x)},
\end{equation}
then Question \ref{nadel-positiva} would have positive answer.

\section{Acknowledgements} 

This work is part of the author's Phd thesis. The author would like to thank his advisor, Giuseppe Pareschi, for his invaluable guidance and help to improve both the exposition and the mathematical content of this article. The author also thanks to the anonymous referees for their comments, which were very useful to improve the presentation of this article and to clarify some proofs. This work was supported by the MIUR Excellence Department Project MatMod@TOV awarded to the Department of Mathematics of the University of Rome Tor Vergata.

\bibliographystyle{alpha}
\bibliography{referencias-seshadrigauss}

\end{document}